\newtheorem{thm}{Theorem}[section]
\newtheorem{lem}[thm]{Lemma}
\newtheorem{cor}[thm]{Corollary}
\newtheorem{NN}[thm]{}
\theoremstyle{definition}\newtheorem{df}[thm]{Definition}
\theoremstyle{definition}
\theoremstyle{definition}
\renewcommand{\phi}{\varphi}
\newcommand{\Z}{\mathbb{Z}}
\newcommand{\Q}{\mathbb{Q}}
\newcommand{\R}{\mathbb{R}}
\newcommand{\C}{\mathbb{C}}
\newcommand{\T}{\mathbb{T}}
\newcommand{\Aff}{\operatorname{Aff}}
\newcommand{\hm}{homomorphism}
\newcommand{\dt}{\delta}
\newcommand{\ep}{\epsilon}
\newcommand{\andeqn}{\,\,\,{\rm and}\,\,\,}
\newcommand{\rforal}{\,\,\,{\rm for\,\,\,all}\,\,\,}
\newcommand{\CA}{$C^*$-algebra}
\newcommand{\SCA}{$C^*$-subalgebra}
\newcommand{\beq}{\begin{eqnarray}}
\newcommand{\eneq}{\end{eqnarray}}
\newcommand{\tforal}{\,\,\,\text{for\,\,\,all}\,\,\,}
\newcommand{\tand}{\,\,\,\text{and}\,\,\,}
\newcommand{\aff}{{\rm Aff}}
\newcommand{\p}{{\mathfrak{p}}}
\newcommand{\q}{{\mathfrak{q}}}
\newcommand{\fr}{{\mathfrak{r}}}
\title{Exponential rank and exponential length for ${\cal Z}$-stable simple \CA s }
\author{Huaxin Lin
 }
\date{}
\begin{document}

\maketitle

\begin{abstract}
Let $A$ be a unital separable simple ${\cal Z}$-stable \CA\,
 which has rational tracial rank at most one and let $u\in U_0(A),$ the connected component of the unitary group
of $A.$ We show that, for any $\ep>0,$ there exists a self-adjoint element $h\in A$ such that
\beq\label{ab-1}
\|u-\exp(ih)\|<\ep.
\eneq
The lower bound of $\|h\|$  could be as large as one wants. If $u\in CU(A),$ the closure of the commutator subgroup of the unitary group, we prove that there exists a self-adjoint element $h\in A$ such that
\beq\label{ab-2}
\|u-\exp(ih)\| <\ep\andeqn \|h\|\le 2\pi.
\eneq
Examples are given that the  bound $2\pi$ for  $\|h\|$  is the optimal
in general.  For the Jiang-Su algebra ${\cal Z},$ we show that, if  $u\in U_0({\cal Z})$ and $\ep>0,$ there exists a real number
$-\pi<t\le \pi$ and a self-adjoint element $h\in {\cal Z}$ with $\|h\|\le 2\pi$ such that
$$
\|e^{it}u-\exp(ih)\|<\ep.
$$

\end{abstract}

\section{Introduction}

Let $A$ be a unital \CA\, and let $U_0(A)$ be the connected component of unitary group of $A$ containing the identity. Suppose that $u\in U_0(A).$ Then $u$ is a finite product of exponentials, i.e.,
$u=\prod_{k=1}^n \exp(ih_k),$ where $h_k$ is a self-adjoint element
in $A.$ One of the interesting questions about the unitary group
of a \CA\, is when $u$ is an exponential? Or more interesting question
is when $u$ is a norm limit of exponentials. If $u\in U_0(A)$ and
$\{u(t): t\in [0,1]\}\subset U_0(A)$ is a continuous path connecting $u$ to the identity,
one may ask how long the length of the path could be.
Exponential rank and exponential length had been extensively studied
(see \cite{Ri}, \cite{RP}, \cite{Ph-fu}, \cite{Ph-2}, \cite{Zh-1}, \cite{Zh-2},  \cite{GLexp}, \cite{Ph-3}, \cite{Ph-4}, \cite{Lnexp},
\cite{Ph-1}, \cite{Thoms-1},  etc. --an incomplete list).

Exponential length and rank have  played, inevitably, important roles in the study
of structure of \CA s, in particular,
in the Elliott program, the classification of amenable
\CA s by $K$-theoretic invariant. The renew interest and direct motivation of this study is the recent research project to study the stable Jiang-Su algebra and its multiplier algebra. It turns out that exponential length
again plays an essential role there.

Let us briefly summarize some facts about exponential rank and length
for unital (simple and  amenable) \CA s in the center of
the Elliott program. It was shown by N. C. Phillips (\cite{Ph-2})
that the exponential rank of a unital purely infinite simple \CA\, is $1+\ep$ and its exponential length is $\pi.$ In fact,
this holds for any unital \CA s of real rank zero (\cite{Lnexp}).
 In other words,
if $u\in U_0(A),$ where $A$ is a unital \CA\, of real rank zero,
then, for any $\ep>0,$ there exists a self-adjoint  element $h\in A$ with
$\|h\|\le \pi$ such that
$$
\|u-\exp(ih)\|<\ep.
$$
These are smallest numbers that one can get.  When $A$ is not of real rank zero, the situation is very different.
For example, if $A$ is a unital simple AH-algebra with slow dimension growth,
then ${\rm cer}(A)=1+\ep.$  But ${\rm cel}(A)=\infty,$ whenever $A$ does not have real rank zero (Theorem 3.5 of \cite{Ph-1}). Recently it was shown (\cite{Lnexp2}) that
${\rm cer}(A)\le 1+\ep$ for any unital simple \CA\, $A$ with tracial rank at most one (without assuming the amenability).

The classification of unital simple amenable \CA s now includes classes
of \CA s far beyond \CA s mentioned above. In fact unital separable simple amenable ${\cal Z}$-stable \CA s which are rationally tracial rank at most one and satisfy the UCT can be classified by the Elliott invariant (\cite{Lninv}).
In this paper, we show that, if $A$ is ${\cal Z}$-stable, i.e.,
$A\otimes {\cal Z}=A,$ has rational tracial rank at most one,
i.e., $A\otimes U$ has tracial rank at most one for some
infinite dimensional UHF algebra $U,$ and $u\in U_0(A),$ then, for any $\ep>0,$  there exists a self-adjoint element $h\in A$ such that
\beq\label{Int-1}
\|u-\exp(ih)\|<\ep.
\eneq
However, in general, there is no control of the norm of $h.$ In fact,
${\rm cel}(A)=\infty,$ i.e., the exponential length of $A$ is infinite.

In the study of classification of simple amenable \CA s, one relies on
a fact that exponential length for unitaries in $CU(A),$ i.e., the closure of the commutator subgroup of $U_0(A)$ is often bounded.
It seems to suggest that, for exponential length of a unital \CA, it is the exponential length of unitaries in $CU(A)$ that needs
to be computed.  So the question is what is the norm bound for the above $h$ when $u$ is in $CU(A).$
We show that, if $A$ is a unital separable simple \CA\, with tracial rank at most one, and $u\in CU(A),$ then (\ref{Int-1}) holds and
$h$ can be chosen so that $\|h\|\le 2\pi.$ Furthermore, we also  prove this
holds for any unital separable simple ${\cal Z}$-stable \CA\, $A$ such that $A\otimes U$ has tracial rank at most one.   Originally, we would like to have the length
at minimum so that $\|h\|$ could be controlled by $\pi.$ The reason
that we have the bound $2\pi$ instead of $\pi$ is not a technical difficulty in the proof.  The reason is that $2\pi$ is the optimal estimate, a fact that we did not realized  which prevents us to have this research done earlier. We show in this paper that, in general,
for a unital simple AH-algebra (or even AI-algebra) $A,$ for any $\sigma>0,$  there are unitaries $u\in U_0(A)$ such that $\|h\|\ge 2\pi-\sigma$ if (\ref{Int-1}) holds for some sufficiently small $\ep.$
What is more surprising at the first was the answer to the question
how long the exponential length of unitaries in $U_0({\cal Z})$ is,  where
${\cal Z}$ is the Jiang-Su algebra, the projectionless simple ASH-algebra with $K_0({\cal Z})=\Z$ and $K_1({\cal Z})=\{0\}.$ It seems that, among experts, one expects the exponential length of ${\cal Z}$  to  be
infinite since ${\cal Z}$ does not have real rank zero.  However, we find that ${\rm cel}({\cal Z})\le 3\pi.$ In fact, we  prove that for any unitary  $u\in U_0({\cal Z}),$  there exists $-\pi<t<\pi$ satisfying the following: for any $\ep>0,$ there exists a self-adjoint  element $h\in {\cal Z}$ with
$\|h\|\le 2\pi$  such that
$$
\|e^{it}u-\exp(ih)\|<\ep.
$$
  We actually
prove this for all unital separable simple  ${\cal Z}$-stable \CA s with a unique tracial state which are rationally tracial rank zero.
An application  of the estimate of exponential length
for those simple \CA s can be found in a subsequent joint work with
Ping Ng.

{\bf Acknowledgments}  The author would acknowledge that he is benefited with e-mail correspondences 
with 
Guihua Gong and N. Chris Phillips during the writing of this research.

\section{Notations}

\begin{df}\label{D1}
{\rm
Let $A$ be a unital \CA. We denote by $U(A)$ the unitary group
of $A.$ We denote by $U_0(A)$ the connected component of $U(A)$ containing the identity and $CU(A)$ the {\it closure} of the commutator subgroup of $U_0(A).$
If $u\in U(A),$ we use the notation ${\bar u}$ for its image in
$U(A)/CU(A).$

Let $u\in U_0(A).$ Denote by ${\rm cel}(u)$ the exponential length of $u$ in $A.$
In fact,
$$
{\rm cel}(u)=\inf\{\sum_{k=1}^n \|h_k\|: u=\prod_{k=1}^n \exp(i h_k):
h_k\in A_{s.a.}\}.
$$

Define
$$
{\rm cel}(A)=\sup\{{\rm cel}(u): u\in U_0(A)\}.
$$
Define
$$
{\rm cel}_{CU}(A)=\sup\{{\rm cel}(u): u\in CU(A)\}.
$$
If $u=\lim_{n\to\infty} u_n,$ where $u_n=\prod_{j=1}^k \exp(ih_{n,j})$ for some self-adjoint  elements $h_{n,j}\in A.$  Then
we write
$$
{\rm cer}(u)\le k+\ep.
$$
If $u=\prod_{j=1}^k \exp(h_j)$ for some $h_1,h_2,...,h_k\in A_{s.a.},$ we write
$$
{\rm cer}(u)\le k.
$$
If ${\rm cer}(u)\le k+\ep$ but ${\rm cer}(u)\not\le k,$ we write
${\rm cer}(u)=k+\ep.$ If ${\rm cer}(u)\le k$ but
${\rm cer}(u)\not\le (k-1)+\ep,$ we write ${\rm cer}(u)=k.$

By $T(A),$ we mean the tracial state space of $A$ and by
$\Aff(T(A))$ the space of all real affine continuous functions on $T(A).$ Let $\tau\in T(A).$ We also use $\tau$ for the trace $\tau\otimes Tr$
on $A\otimes M_n,$ where $Tr$ is the standard trace on $M_n.$

Denote by $\rho_A: K_0(A)\to \Aff(T(A))$ the positive \hm s defined by
$\rho_A([p])=\tau(p)$ for all projections $p\in M_n(A),$ $n=1,2,....$

}

\end{df}

\begin{df}\label{Ddet}

{\rm
Let $A$ be a unital \CA\, with $T(A)\not=\emptyset.$
Let $u\in U_0(A).$
Suppose that $\{u(t): t\in [0,1]\}$ is a continuous path of unitaries
which is also piece-wisely smooth such that $u(0)=u$ and $u(1)=1.$
Define de la Harp-Skandalis determinant as follows:
\beq\label{Ddet-1}
{\rm Det}(u):={\rm Det}(u(t)):=\int_{[0,1]}\tau({du(t)\over{dt}}w(t)^*) dt\tforal \tau\in T(A).
\eneq
Note that, if $u_1(t)$ is another continuous path which is piece-wisely smooth with $u_1(0)=u$ and $u_1(1)=1,$ Then
${\rm Det}((u(t))-{\rm Det}(u_1(t))\in \rho_A(K_0(A)).$
Suppose that $u, v\in U(A)$ and $uv^*\in U_0(A).$
Let $\{w(t): t\in [0,1]\}\subset U(A)$ be a piece-wisely smooth
and continuous path such that $w(0)=u$ and $w(1)=v.$
Define
$$
R_{u,v}(\tau)={\rm Det}(w(t))(\tau)=\int_{[0,1]}\tau({dw(t)\over{dt}}w(t)^*) dt\tforal \tau\in T(A).
$$
Note that $R_{u,v}$ is well-defined (independent of the choices of the path) up to elements in $\rho_A(K_0(A)).$

}

\end{df}

\begin{df}\label{Zpq}
{\rm Denote by $\Q$ the group of rational numbers. Let $\fr$ be a supernatural number. Denote by $M_\fr$ the UHF-algebra associated with $\fr.$
Denote by $\Q_\fr$ the group $K_0(M_\fr)$ with order as a subgroup
of $\Q.$

Denote by ${\cal Z}$ the Jiang-Su algebra (\cite{JS}) which is
a unital separable simple ASH-algebra with $K_0({\cal Z})=\Z$ and
$K_1({\cal Z})=\{0\}.$
Let $\p, \q$ be two relatively prime supernatural numbers of infinite type. Denote by
$$
{\cal Z}_{\p, \q}=\{f\in C([0,1], M_{\p\q}):
 f(0)\in M_\p\andeqn f(1)\in M_\q\}.
 $$
 Here we identify $M_\fr$ with $M_\fr\otimes 1$ as a subalgebra of
 $M_{\p\q}.$  One may write
 ${\cal Z}$ as a stationary inductive limit of ${\cal Z}_{\p,\q}$
 (see \cite{RW}).
 }
\end{df}

\begin{df}\label{trrank}
{\rm
Let $A$ be a unital simple \CA. We write $TR(A)=0$ if tracial rank of $A$ is zero. We write $TR(A)\le 1,$ if the tracial rank of $A$ is either zero or one (see \cite{Lnproclond}).

Denote by ${\cal A}_0$ the class of unital separable simple \CA s
$A$ such that $TR(A\otimes U)=0$ for some infinite dimensional UHF-algebra $U.$  Note that ${\cal Z}\in {\cal A}_0.$

Denote by ${\cal A}_1$ the class of unital simple separable \CA s $A$ such that $TR(A\otimes U)\le 1.$  We refer the reader to
(\cite{W}), (\cite{Lnappen}), (\cite{LN-1}), (\cite{LN-2}), (\cite{Lnhuasdo}), (\cite{Lninv}) and (\cite{LS})  for some further discussion of these \CA s.

}
\end{df}

\begin{df}\label{pointev}
{\rm
Let $A$ be a unital \CA\, and $C=C([0,1], A).$
Denote by $\pi_t: C\to A$ the point-evaluation: $\pi_f(f)=f(t)$ for all $f\in C.$

}
\end{df}

\begin{df}\label{Dmeas}

{\rm
Let $X$ be a compact metric space and let $\psi: C(X)\to \C$ be a state.  Denote by $\mu_{\psi}$ the probability Borel measure induced by $\psi.$
}

\end{df}

\section{Exponential rank}
The following could be easily proved directly. But it is a special case of 6.3 of \cite{Lninv}.

\begin{lem}\label{botext}
Let $\ep>0.$ There exists $\dt>0$ satisfying the following:
Suppose that $A$ is a unital separable simple \CA\, with $TR(A)\le 1$ and suppose
that $u\in U(A)$ with ${\rm sp}(u)=\T.$ Then, for any
$x\in K_0(A)$  with $\|\rho_A(x)\|<\dt$ and any $y\in K_1(A),$  there exists a unitary
$v\in A$  with $[v]=y$ in $K_1(A)$ such that
\beq\label{botext-1}
\|[u,\, v]\|<\ep\andeqn {\rm bott}_1(u,v)=x.
\eneq

\end{lem}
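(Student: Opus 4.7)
The plan is to exploit the tracial one-dimensional structure of $A$ to reduce the construction to an explicit Bott calculation inside a direct sum of interval building blocks, then paste on a small unitary in a complementary corner to carry the prescribed $K_1$ class.

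First I would apply the definition of $TR(A)\le 1$ to the finite set $\{u\}$ and a tolerance $\eta\ll\ep$ to be chosen later. This yields a projection $p\in A$ and a $C^*$-subalgebra $B\subset pAp$ of the form $\bigoplus_i M_{k_i}(C([0,1]))$ such that $\|up-pu\|<\eta$, $pup$ lies within $\eta$ of a unitary $u_B\in B$, and $\tau(1-p)<\eta$ uniformly for $\tau\in T(A)$. Using that $\mathrm{sp}(u)=\T$ together with a standard spectral adjustment (by a small unitary perturbation within $B$ supported on each summand), I can arrange that on every summand the unitary $u_B^{(i)}(t)$ has full spectrum and is, up to a further conjugation by a path of unitaries in $M_{k_i}$, put into a convenient piecewise-diagonal form. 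Simultaneously $\tau(1-p)<\eta$ makes $(1-p)A(1-p)$ a full hereditary subalgebra so that the corner inclusion induces an isomorphism on $K_1$.

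Next I would realize $x$ as a bott element inside $B$. Because $K_0(B)\cong \bigoplus_i\Z$, and the tracial image of the $i$-th generator under $\rho_A$ is comparable to $1/k_i$, the smallness condition $\|\rho_A(x)\|<\dt$ (for a suitable $\dt$ depending only on $\ep$) guarantees that $x$ can be lifted to an element $x_B\in K_0(B)$ whose coefficient on each summand is bounded in absolute value by, say, $k_i/2$. In each summand I then carry out the textbook construction: choose a continuous path of spectral projections of $u_B^{(i)}$ along a short arc and twist it against a unitary cycle of winding number equal to the chosen integer; this produces a unitary $w_i\in M_{k_i}(C([0,1]))$ with $\|[u_B^{(i)},w_i]\|<\eta$ whose Bott class with $u_B^{(i)}$ equals the prescribed generator multiple. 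Set $w=\sum_i w_i\in B$.

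For the $K_1$ part, note $K_1(B)=0$, so the $B$-piece contributes nothing. Choose a unitary $v_2\in (1-p)A(1-p)+\C p$ whose class in $K_1(A)$ equals $y$; this is possible by the $K_1$-isomorphism from the previous paragraph. Define
\[
v = (w+(1-p))\cdot(p+v_2)\in U(A).
\]
Since $1-p$ approximately commutes with $u$ and with $w$, and $p+v_2$ commutes with $p$, a standard commutator estimate gives $\|[u,v]\|<\ep$ for $\eta$ sufficiently small; and the bott class decomposes as ${\rm bott}_1(u,v)={\rm bott}_1(u_B,w) + {\rm bott}_1((1-p)u(1-p),v_2)$, the second term vanishing because the two factors lie in a common corner where they truly commute up to $O(\eta)$. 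Absorbing the resulting $O(\eta)$-correction (a $K_0$ element of arbitrarily small tracial norm) into a minor adjustment of the winding numbers used for $w$ produces the exact equality ${\rm bott}_1(u,v)=x$.

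The main obstacle is the lifting step in the second paragraph: one needs $\dt$ uniform in $A$ so that any $K_0$ class with $\|\rho_A(x)\|<\dt$ is actually representable as a bott element inside the interval subalgebra produced by the tracial approximation. This hinges on the comparability between the tracial norm of $K_0$-classes in $A$ and in $B$ that follows from $\tau(1-p)<\eta$ and the divisibility afforded by $TR(A)\le 1$, and is the content one borrows from 6.3 of \cite{Lninv}.
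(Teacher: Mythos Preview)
The paper does not actually prove this lemma; it simply records that it is a special case of 6.3 of \cite{Lninv}. So there is no argument in the paper to compare with, only the citation.

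Your direct approach has a genuine gap at the lifting step, and it is not the gap you identify. You claim that, once $\|\rho_A(x)\|<\dt$, the class $x$ can be lifted to some $x_B\in K_0(B)\cong\bigoplus_i\Z$. This is false in general: the inclusion $K_0(B)\to K_0(A)$ hits only a free subgroup, whereas $K_0(A)$ for a simple $C^*$-algebra with $TR(A)\le 1$ can contain torsion, or more generally elements of $\ker\rho_A$ that are not in the image of any interval subalgebra produced by a tracial approximation. Any torsion $x$ satisfies $\rho_A(x)=0$, so your tracial smallness hypothesis is vacuously satisfied, yet your construction produces only ${\rm bott}_1(u,v)$ lying in the image of $K_0(B)$, which is torsion free. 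The ``divisibility afforded by $TR(A)\le 1$'' controls tracial values, not the full $K_0$-class, so it cannot rescue this step. Your last paragraph effectively concedes this and defers to 6.3 of \cite{Lninv}; but that result is exactly what the paper invokes for the entire lemma, so once you are willing to cite it there is nothing left to prove and the rest of your argument is redundant.

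If you want a genuinely self-contained argument, the missing idea is that one must realize elements of $\ker\rho_A$ (not merely elements of small tracial norm) as Bott classes, and this requires working with the homomorphism $C(\T)\to A$ determined by $u$ (using ${\rm sp}(u)=\T$) rather than only with an interval subalgebra $B$. That is precisely the kind of existence result packaged in 6.3 of \cite{Lninv}.
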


The following is also known and we state here for the convenience.
\begin{lem}\label{triv}
Let $A$ be a unital \CA\, with $T(A)\not=\emptyset.$ Let $u$ and $v$ be two unitaries in $A$ with $[u]=[v]$ in $K_1(A).$
Suppose that there is a unitary $w\in A$ such that
\beq\label{triv-1}
\|uw^*v^*w-1\|<2.
\eneq
Then,
\beq\label{triv-2}
R_{u,v}(\tau)-{1\over{2\pi i}}\tau(\log(uw^*v^*w))\in \rho_A(K_0(A)).
\eneq
\end{lem}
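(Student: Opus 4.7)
The plan is to compute $R_{u,v}(\tau)$ by exhibiting a specific piecewise-smooth path from $u$ to $v$ whose de la Harpe--Skandalis integral can be evaluated in closed form, and then to invoke path-independence modulo $\rho_A(K_0(A))$. Set $x:=uw^*v^*w$; since $\|x-1\|<2,$ $\mathrm{sp}(x)$ omits $-1,$ so $h:=-i\log(x)$ is a well-defined self-adjoint element of $A$ with $x=e^{ih}$ and $u=e^{ih}(w^*vw)$.

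For the first segment I would take the exponential deformation $\xi(t):=e^{i(1-t)h}(w^*vw),$ $t\in[0,1],$ a smooth path in $U(A)$ joining $u$ to $w^*vw.$ Using $(w^*vw)(w^*v^*w)=1$ and the commutativity of $h$ with $e^{i(1-t)h},$ one finds $\dot\xi(t)\xi(t)^*=-ih,$ so
$$
\int_0^1\tau(\dot\xi(t)\xi(t)^*)\,dt \;=\; -i\tau(h) \;=\; -\tau(\log x),
$$
which under the $\tfrac{1}{2\pi i}$ normalization built into the de la Harpe--Skandalis determinant reproduces (up to sign) the term $\tfrac{1}{2\pi i}\tau(\log(uw^*v^*w))$ of the statement.

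For the second segment, since $w$ need not lie in $U_0(A),$ I would pass to $U(M_2(A))$ via the corner embedding $a\mapsto\mathrm{diag}(a,1).$ Choose a continuous unitary path $R(t)$ in $U(M_2(\mathbb{C}))$ from $R(0)=I_2$ to a unitary that conjugates $\mathrm{diag}(w,1)$ to $\mathrm{diag}(1,w)$---for instance, the planar $(\pi t/2)$-rotation---and set $W(t):=R(t)\,\mathrm{diag}(w,1)\,R(t)^*,$ so that $W(0)=\mathrm{diag}(w,1)$ and $W(1)=\mathrm{diag}(1,w).$ The path $Z(t):=W(t)^*\,\mathrm{diag}(v,1)\,W(t)$ then joins $\mathrm{diag}(w^*vw,1)$ to $\mathrm{diag}(v,1),$ and cyclicity of $\tau$ together with $\tfrac{d}{dt}\tau(W(t)^*W(t))=0$ yields $\tau(\dot Z(t)Z(t)^*)\equiv 0,$ so this segment contributes $0$ to the determinant.

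Concatenating the two segments produces a piecewise-smooth path in $U(M_2(A))$ from $\mathrm{diag}(u,1)$ to $\mathrm{diag}(v,1)$ whose normalized determinant equals $\tfrac{1}{2\pi i}\tau(\log(uw^*v^*w)),$ and the conclusion follows from the standard path-independence statement: any two piecewise-smooth paths with matching endpoints, in $U(A)$ or in any matrix amplification, yield determinants differing by an element of $\rho_A(K_0(A)).$ The principal technical point is precisely this stabilization compatibility, which follows from the fact that $\rho_A$ is extended to each $M_n(A)$ by $\tau\otimes\mathrm{Tr}$ and factors through the natural map $K_0(A)\to K_0(M_n(A)),$ so the $\rho_A(K_0(A))$-indeterminacy in $R_{u,v}$ is insensitive to stabilization.
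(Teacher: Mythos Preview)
Your proof is correct and follows essentially the same strategy as the paper's: build an explicit piecewise-smooth path in $M_2(A)$ from $\mathrm{diag}(u,1)$ to $\mathrm{diag}(v,1)$ consisting of an exponential segment (contributing $\tau(\log(uw^*v^*w))$ up to the $\tfrac{1}{2\pi i}$ normalization) followed by a conjugation segment with vanishing determinant, then invoke path-independence modulo $\rho_A(K_0(A))$. The only cosmetic difference is in the second segment: the paper connects $1$ to $W=\mathrm{diag}(w,w^*)\in U_0(M_2(A))$ by writing $W$ as a product of exponentials, whereas you conjugate $\mathrm{diag}(w,1)$ to $\mathrm{diag}(1,w)$ by a scalar rotation $R(t)$; both constructions give a path $W(t)$ with $\tau(\dot Z Z^*)\equiv 0$ for $Z(t)=W(t)^*\mathrm{diag}(v,1)W(t)$, and your verification of this via $\dot W^*W+W^*\dot W=0$ is exactly what the paper's ``$\mathrm{Det}(W(t))=0$'' encodes.
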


\begin{proof}
It suffices to show that there is one piece-wisely smooth and continuous
path $\{U(t): t\in [0,1]\}\in M_2(A)$ such that
$U(0)={\rm diag}(u, 1),$ $U(1)={\rm diag}(v,1)$ and
$$
{1\over{2\pi i}}\int_0^1 \tau(U(t)'U(t)^* dt={1\over{2\pi i}}\tau(\log(uw^*v^*w)).
$$
To see this, let $h={1\over{2\pi i}}\log(uw^*v^*w).$
Define $U(t)={\rm diag}(u\exp(i 4\pi ht), 1)$ for $t\in [0, 1/2].$
Define $U_1(t)=U(2t)$ for $t\in [0,1].$
Let $W={\rm diag}(w,w^*).$ Then $W=\prod_{j=1}^m\exp( i 2\pi h_j)$ for some
self-adjoint  elements $h_1,h_2,...,h_m\in M_2(A).$
Define $W(0)=1$ and
\beq\label{triv-3}
W(t)=(\prod_{j=1}^{k-1}\exp(i 2\pi h_j))\exp(i2\pi mh_kt)\tforal t\in (k-1/m, k/m],
\eneq
$k=1,2,...,m.$
Let $Z(t)=W(t)^*{\rm diag}(v,1)W(t)$ for $t\in [0,1].$
Then $Z(t)$ is a piece-wisely smooth and continuous path with $Z(0)={\rm diag}(v,1)$ and $Z(1)=W^*{\rm diag}(v,1)W.$
It is straightforward to compute that the de la Harpe-Skandalis  determinant
$$
Det(W(t))=0.
$$
Define $U(t)=Z(1-2t)$ for $t\in (1/2, 1]$ and define
$U_2(t)=Z(1-t)$ for $t\in [0,1].$
Now $U(t)$ is a continuous and piece-wisely continuous path with
$U(0)={\rm diag}(u,1)$ and $U(1)={\rm diag}(v,1).$
We then compute that
\beq\label{triv-4}
{1\over{2\pi}}\int_0^1 \tau({dU(t)\over{dt}} U(t)^*) dt&=& {\rm Det}(U(t))\\
&=&{\rm Det}(U_1(t))+{\rm Det}(U_2(t))\\
&=&{\rm Det}(U_1(t))+0\\
&=&{1\over{2\pi i}}\tau(\log(uw^*v^*w)).
\eneq
for all $\tau\in T(A).$

\end{proof}

\begin{lem}\label{Smrot}
Let $A$ be a unital separable \CA\, of stable rank one.
Suppose that $u, v\in  U(A)$ with
$uv^*\in CU(A).$ Then, for any $\dt>0,$ there exists $a\in A_{s.a.}$ with
$\|a\|<\dt$ such that
$$
\hat{a}-R_{u,v}\in \rho_A(K_0(A)).
$$
\end{lem}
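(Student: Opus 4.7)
The plan is to exploit the hypothesis $uv^*\in CU(A)$ by approximating $uv^*$ to arbitrary accuracy by an \emph{exact} finite product of commutators, and then splitting $R_{u,v}$ into a first piece that is automatically in $\rho_A(K_0(A))$ and a second piece that is small and realised along an explicit exponential path.

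Given $\dt>0,$ I would first fix $\eta>0$ (to be chosen small at the end) and, using $uv^*\in CU(A),$ pick $w=\prod_{k=1}^m[x_k,y_k]$ with $x_k,y_k\in U_0(A)$ satisfying $\|uv^*-w\|<\eta.$ Concatenation of paths yields
$$R_{u,v}\equiv R_{u,wv}+R_{wv,v}\pmod{\rho_A(K_0(A))}.$$
For the second summand, a piece-wisely smooth path $w(t)$ from $w$ to $1$ in $U_0(A)$ gives the path $w(t)v$ from $wv$ to $v,$ and the identity $(w(t)v)'(w(t)v)^*=w(t)'w(t)^*$ forces $R_{wv,v}=R_{w,1}$ modulo $\rho_A(K_0(A)).$ The product rule, combined with traciality, makes $z\mapsto R_{z,1}$ a group \hm\ from $U_0(A)$ into the abelian quotient $\Aff(T(A))/\rho_A(K_0(A)),$ so it kills every exact commutator, and therefore $R_{w,1}\in\rho_A(K_0(A)).$

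For the first summand, the identity $\|u(wv)^*-1\|=\|uv^*-w\|<\eta$ together with $\eta<2$ allows the principal logarithm to produce a self-adjoint $h\in A$ with $\exp(ih)=uv^*w^*$ and $\|h\|\le 2\arcsin(\eta/2).$ Along the path $U(t)=\exp(i(1-t)h)\,wv$ from $u$ to $wv,$ a direct computation gives $U'(t)U(t)^*=-ih,$ so after integration $R_{u,wv}=\hat a$ for $a$ a fixed real scalar multiple of $h.$ Choosing $\eta$ small enough that the resulting $a$ satisfies $\|a\|<\dt$ finishes the argument, because then $R_{u,v}\equiv R_{u,wv}=\hat a\pmod{\rho_A(K_0(A))}.$

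The step that requires the most care is confirming that $z\mapsto R_{z,1}$ genuinely descends to a \hm\ into $\Aff(T(A))/\rho_A(K_0(A))$ and hence kills exact commutators; this is essentially the standard path-independence of the determinant, and the stable rank one hypothesis is what guarantees that $CU(A)$ is the closure of the group of products of commutators of unitaries and keeps all of the path constructions above inside $U_0(A).$
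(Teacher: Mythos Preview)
Your argument is correct. The paper's proof is a single line---it just invokes the standard fact that $uv^*\in CU(A)$ forces $R_{u,v}\in\overline{\rho_A(K_0(A))}$---and what you have written is essentially an explicit unpacking of why that fact holds: you approximate $uv^*$ by an exact product $w$ of commutators and use that the determinant $z\mapsto R_{z,1}$ is a homomorphism into the abelian group $\Aff(T(A))/\rho_A(K_0(A))$, hence annihilates $w$. The payoff of your hands-on route is that the element $a$ is produced directly as a scalar multiple of the small logarithm $h=\tfrac{1}{i}\log(uv^*w^*)$; the paper's one-liner, taken literally, yields only a small element of $\Aff(T(A))$ lying in the correct $\rho_A(K_0(A))$-coset, and one would still need a separate (Cuntz--Pedersen type) step to lift that to an $a\in A_{s.a.}$ of comparably small norm. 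Your version sidesteps that step entirely.

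One minor correction: your remark about the role of stable rank one is not quite right. By the paper's own definition $CU(A)$ is the closure of the commutator subgroup of $U_0(A)$, so the approximation by products of commutators of elements of $U_0(A)$ needs no extra hypothesis, and since $U_0(A)$ is closed all of your paths automatically stay inside it. Stable rank one plays no essential role in the argument you actually gave.
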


\begin{proof}
This follows from the fact that $R_{u,v}\in \overline{\rho_A(K_0(A))}.$
\end{proof}

\begin{lem}\label{homt} {\rm (Theorem 6.2 of \cite{Lnhomt1})}
Let $\ep>0$ and let $\Delta: (0,1)\to (0,1)$ be a non-decreasing function. There exists $\dt>0$ and $\sigma>0$ satisfying the following:
For any unital separable simple \CA\, $A$ with $TR(A)\le 1$ and $u,\,v\in U(A)$ such that
\beq\label{homt-1}
\mu_{\tau\circ \phi}(I_a)\ge \Delta(a)\tforal \tau\in T(A)
\eneq
and for all arc $I_a$ with length at least $a\ge \sigma,$
where $\phi: C(\T)\to A$ is the \hm\, defined by $\phi(f)=f(u)$ for all $f\in C(\T),$
\beq\label{homt-2}
\|[u,\, v]\|<\dt,\,\,\, [v]=0\,\,\,{\rm in}\,\,\, K_1(A)\andeqn {\rm bott}_1(u,v)=0,
\eneq
there exists a continuous path of unitaries $\{v(t): t\in [0,1]\}\subset U_0(A)$ such that
\beq\label{homt-3}
\|[v(t), u]\|<\ep\tforal t\in [0,1], \,\,\, v(0)=v\andeqn v(1)=1.
\eneq
\end{lem}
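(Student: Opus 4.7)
The plan is to reduce to a ``model'' situation using $TR(A)\le 1$, then build the homotopy fiberwise using the vanishing of both $\mathrm{bott}_1(u,v)$ and $[v]$. First, choosing auxiliary parameters $\dt_1,\eta>0$ to be fixed at the end, I would apply the tracial approximation afforded by $TR(A)\le 1$ to produce a projection $p\in A$ with $\tau(1-p)<\eta$ uniformly in $\tau\in T(A)$ and an interval-type \SCA\, $B\subset pAp$ (direct sum of pieces of the form $C([0,1],M_{k_i})$ and finite-dimensional summands), together with unitaries $u_1,v_1\in B+\C(1-p)$ within $\dt_1$ of $u,v$ in norm. The spectral-measure bound $\mu_{\tau\circ\phi}(I_a)\ge \Delta(a)$ transfers to $u_1$ with $\Delta$ slightly weakened, the approximate commutation persists, and $[v_1]=0$ together with $\mathrm{bott}_1(u_1,v_1)=0$ in $K_*(B)$ follow from the corresponding hypotheses after adjusting for the cut-down.

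Next, inside each summand $C([0,1],M_{k_i})$, the approximate commutation together with the spectral spread of $u_1$ allows a fiberwise Berg/Exel--Loring type diagonalization: after a small perturbation $v_1(t)$ can be written in a basis adapted to the spectral decomposition of $u_1(t)$, and the diagonal entries become ``eigenvalue branches'' of $v_1$ indexed by the spectral circle of $u_1$. The triviality of $\mathrm{bott}_1(u,v)$ is precisely the statement that the net winding numbers of these branches cancel, while $[v]=0$ in $K_1(A)$ disposes of the global winding. Pairing branches that wind oppositely and contracting each pair yields a continuous family $\{v_1(s,t)\}_{s\in[0,1]}$ connecting $v_1$ to $1$ through unitaries that commute, with controlled error, with $u_1$. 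Extending by the identity on $1-p$ and transporting back through the approximation produces the desired path $\{v(s): s\in[0,1]\}$ in $A$.

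The hard part is quantitative: the commutator $\|[v(s),u]\|$ must stay below $\ep$ for \emph{every} intermediate $s$, not only at the endpoints. The Berg-type reshuffling controls commutators uniformly only if $u_1$ has no large spectral gaps, so $\sigma$ must be small enough that $\Delta(\sigma)$ dominates the unavoidable spectral concentration introduced by the fiberwise pairing, while $\dt$ must be small enough that the accumulated perturbations from the tracial approximation and from the homotopy itself remain within $\ep$. Tracking these constants through the Berg reshuffling, and verifying that the choices of $\dt$ and $\sigma$ depend only on $\ep$ and $\Delta$ (not on $A$, $u$ or $v$), is the technical core of the argument and is where uniformity over the class $\{A: TR(A)\le 1\}$ is obtained.
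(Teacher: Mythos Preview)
The paper does not prove this lemma at all: it is stated with the parenthetical ``(Theorem 6.2 of \cite{Lnhomt1})'' and used as a black box, with no argument given in the present paper. There is therefore nothing in this paper to compare your proposal against.

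As a separate remark, your sketch is a plausible high-level outline of the kind of argument that underlies such basic homotopy lemmas, but it is only a sketch and the actual proof in \cite{Lnhomt1} is considerably more elaborate. In particular, the step ``pairing branches that wind oppositely and contracting each pair'' hides the real difficulty: after the tracial cut-down, $\mathrm{bott}_1(u_1,v_1)$ need not vanish in each summand of $B$ separately, only globally in $A$, so the winding numbers in the individual interval summands need not cancel locally and cannot simply be paired off summand by summand. Handling this requires an additional mechanism (in \cite{Lnhomt1} this is done via a more sophisticated use of the $K$-theoretic hypotheses together with a uniqueness/existence machinery, not a direct Berg reshuffling). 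Your proposal also does not explain how the path is made continuous across the boundary between the large subalgebra $B$ and the complementary corner $(1-p)A(1-p)$, nor how to absorb the error from $\|u-u_1\|$ uniformly along the entire path rather than just at the endpoints. These are genuine gaps, not merely bookkeeping.
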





The following is an  variation of a special case of 5.1 of \cite{LN-3}.

\begin{lem}\label{LN51}
Let $\ep>0$ and let $\Delta: (0,1)\to (0,1)$ be a non-decreasing function.
There is $\dt>0,$ $\eta>0,$ $\sigma>0$ and there is a finite subset
${\cal G}\subset C(\T)_{s.a.}$ satisfying the following:
For any unital separable simple \CA\, $A$ with $TR(A)\le 1,$ any pair of unitaries $u, v\in A$
$sp(u)=\T$
and
$[u]=[v]$ in $K_1(A),$
$$
\mu_{\tau\circ \phi}(I_a)\ge \Delta(a)\tforal \tau\in T(A)
$$
for all intervals $I_a$ with length at least $\eta,$ where
$\phi: C(\T)\to A$ is the \hm\, defined by $\phi(f)=f(u)$ for all $f\in C(\T),$
\beq\label{LN51-2}
|\tau(g(u))-\tau(g(v))|<\dt \tforal \tau\in T(A)
\eneq
and for all $g\in {\cal G},$
\beq\label{LN51-3}
uv^*\in CU(A),
\eneq
 for any $a\in \Aff(T(A))$ with $a-R_{u,v}\in \rho_A(K_0(A))$ and $\|a\|<\sigma$ and any $y\in K_1(A),$ there is a unitary
$w\in A$ such that $[w]=y,$ 
\beq\label{LN51-4}
\|u-w^*vw\|<\ep\andeqn\\
{1\over{2\pi i}}\tau(\log(u^*w^*vw))=a(\tau)\tforal \tau\in T(A).
\eneq
\end{lem}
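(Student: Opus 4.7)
The strategy is to construct $w$ in two stages. First, obtain a preliminary intertwiner $w_0\in U(A)$ with $\|u-w_0^*vw_0\|$ very small, by invoking an existing uniqueness theorem. Second, perturb by a unitary $z$ approximately commuting with $u$, so that $w:=w_0z$ carries the prescribed $K_1$-class $y$ and shifts the de la Harpe--Skandalis trace exactly to the value $a$.

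For the first stage, choose $\ep'\ll \ep$ and apply the uniqueness content of 5.1 of \cite{LN-3} to the two homomorphisms $\phi,\psi\colon C(\T)\to A$ given by $\phi(f)=f(u)$ and $\psi(f)=f(v)$. The hypotheses of the present lemma supply exactly the required input: $sp(u)=\T$ combined with the measure lower bound $\mu_{\tau\circ\phi}(I_a)\ge\Delta(a)$ gives the spectral/measure control on $\phi$; the equality $[u]=[v]$ in $K_1(A)$ together with $uv^*\in CU(A)$ matches the $K_1$- and $U/CU$-data of $\phi$ and $\psi$; and the tracial closeness $|\tau(g(u))-\tau(g(v))|<\dt$ on the finite set ${\cal G}$ gives the tracial compatibility. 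The conclusion is a unitary $w_0\in U(A)$ with $\|u-w_0^*vw_0\|<\ep'$ and (adjusting at the end) $[w_0]=0$ in $K_1(A)$.

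For the second stage, define
$$a_0(\tau):=\frac{1}{2\pi i}\tau(\log(u^*w_0^*vw_0)),$$
which is well-defined once $\ep'$ is small enough that $u^*w_0^*vw_0$ lies in the domain of the principal logarithm. By Lemma \ref{triv} applied to $u,v,w_0$, together with the trace identity $\tau(\log(u^*w_0^*vw_0))=-\tau(\log(uw_0^*v^*w_0))$, one obtains $a_0\equiv R_{u,v}$ modulo $\rho_A(K_0(A))$ (up to the sign convention of Lemma \ref{triv}). Combined with the assumption $a-R_{u,v}\in\rho_A(K_0(A))$ this gives $a-a_0=\rho_A(x_0)$ for some $x_0\in K_0(A)$; since $\|a\|<\sigma$ and $\|a_0\|=O(\ep')$, we may arrange $\|\rho_A(x_0)\|$ as small as we wish. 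Now invoke Lemma \ref{botext} to produce $z\in U(A)$ with $[z]=y$, $\|[u,z]\|<\ep''$, and ${\rm bott}_1(u,z)=x_0$, where $\ep''$ is chosen so that $\|u-z^*uz\|<\ep-\ep'$. Set $w:=w_0 z$; then $[w]=y$ and $\|u-w^*vw\|\le \|u-z^*uz\|+\|z^*(u-w_0^*vw_0)z\|<\ep$.

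The crux---and the step I expect to be the main obstacle---is verifying that the bott-element perturbation moves the de la Harpe--Skandalis trace by exactly the expected amount:
\begin{equation*}
\frac{1}{2\pi i}\tau(\log(u^*w^*vw))-\frac{1}{2\pi i}\tau(\log(u^*w_0^*vw_0))=\rho_A({\rm bott}_1(u,z))(\tau),\qquad \tau\in T(A).
\end{equation*}
This \emph{bott-element moves the determinant} identity is proved by computing the de la Harpe--Skandalis determinant along an explicit piece-wisely smooth path in $M_2(A)$ from ${\rm diag}(u^*w_0^*vw_0,1)$ to ${\rm diag}(u^*w^*vw,1)$ that realizes the Bott projection associated to the pair $(u,z)$, in the spirit of Lemma \ref{triv}. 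Granting this identity, the right-hand side equals $\rho_A(x_0)=a-a_0$, so $\frac{1}{2\pi i}\tau(\log(u^*w^*vw))=a(\tau)$, completing the proof.
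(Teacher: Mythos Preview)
Your proposal is correct and follows essentially the same two-stage strategy as the paper: first obtain an approximate intertwiner via a uniqueness theorem (the paper uses \ref{QT}), then correct by a unitary $z$ with prescribed ${\rm bott}_1(u,z)$ via \ref{botext} to hit both the target $K_1$-class and the target value $a$. The one point worth noting is your ``crux'': the paper does not compute a de la Harpe--Skandalis determinant along a path in $M_2(A)$, but instead chooses $\theta$ small enough that $\tau(\log(u_1u_2))=\tau(\log u_1)+\tau(\log u_2)$ for unitaries $\theta$-close to $1$, and then invokes Exel's trace formula (Lemma 3.5 of \cite{Lninv}) to write $\rho_A({\rm bott}_1(u,z))(\tau)=\tfrac{1}{2\pi i}\tau(\log(z^*u^*zu))$, reducing the identity you flag to a three-line chain of trace equalities.
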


\begin{proof}
Let $\ep>0$ and $\Delta$ be  given.
Choose $\ep>\theta>0$ such that,
$\log(u_1),$ $\log(u_2)$ and $\log(u_1u_2)$ are well defined and
\beq
\tau(\log(u_1u_2))=
\tau(\log(u_1))+ \tau(\log(u_2))
\eneq
for all $\tau\in T(A)$ and
for any unitaries
$u_1, u_2$
such  that
$$
\|u_j-1\|<\theta,\,\,\, j=1,2.
$$

Let $\dt'>0$ (in place of $\dt$) be required by \ref{botext} for $\theta/2$ (in place of $\ep$).
Put $\sigma=\dt'/2.$
Let $\dt>0$ and $\eta$ be required by \ref{QT} for $\min\{\sigma, \theta, 1\}$ (in place of $\ep$) and $\Delta.$
Suppose that $A$ is a unital separable simple \CA\, with $TR(A)\le 1$ and
$u,\, v\in U(A)$ satisfy the assumption for the above $\dt,$ $\eta$ and
$\sigma.$
Then, by \ref{QT}, there exists a unitary $z\in U(A)$ such that
\beq\label{LN51-5}
\|u-z^*vz\|<\min\{\theta, \sigma, 1\}.
\eneq
Let $b={1\over{2\pi i}}\log(u^*z^*vz).$ Then
$\|b\|<\min\{\theta, \sigma, 1\}.$  By \ref{triv},
$\hat{b}-R_{u,v}\in \rho_B(K_0(A)).$

Let $a\in \Aff(T(A))$ be such that
$\|a\|<\sigma$ and $a-R_{u,v}\in \rho_A(K_0(A))$ as given by the lemma. It follows that $a-\hat{b}\in \rho_A(K_0(A)).$ Moreover, $\|a-\hat{b}\|<2\sigma<\dt'.$
It follows from \ref{botext} that there exists a unitary $z_1\in A$ such that
\beq\label{LN15-6}
[z_1]=-y-[z],\,\,\,
\|[u, z_1]\|<\theta/2\andeqn {\rm bott}_1(u,z_1)(\tau)=a(\tau)-\tau(b)
\eneq
for all $\tau\in T(C).$

Define $w=zz_1^*.$ Then
\beq\label{LN51-7}
[w]=y\andeqn \|u-w^*vw\|<\theta<\ep.
\eneq

We compute that
\beq\label{LN51-8}
{1\over{2\pi i}}\tau(\log(u^*w^*vw))&=&
{1\over{2\pi i}}\tau(\log(u^*z_1z^*vzz_1^*))\\
&=&{1\over{2\pi i}}\tau(\log(u^*z_1uu^*z^*vzz_1^*))\\
&=& {1\over{2\pi i}}\tau(\log(z_1^*u^*z_1uu^*z^*vz))\\
&=&{1\over{2\pi i}}(\tau(\log(z_1u^*z_1^*u)+\tau(\log(u^*z^*vz)))\\
&=&{\rm bott}_1(u,z_1)(\tau)+\tau(b)\\
&=& a(\tau)\,\,\,\rforal \tau\in T(A),
\eneq
where we use the Exel's formula for bott element (see Lemma 3.5 of \cite{Lninv}) in the second last equality.
\end{proof}

\begin{lem}\label{Appu}
Let $\ep > 0$ and  let $\Delta: (0, 1) \to  (0, 1)$ be a non-decreasing map. There exists $\eta>0,\, \dt > 0$ and
a finite subset ${\cal G}\in C(\T)_{s.a.}$  satisfying
the following:
Suppose that $A$ is a ${\cal Z}$-stable unital separable simple \CA\,
 in ${\cal A}_1$
and suppose that $u, v\in U(A)$ are two unitaries such that ${\rm sp}(u)=\T,$
\beq\label{Appu-1}
\mu_{\tau\circ \phi}(I_a)\ge \Delta(a)
  \tforal \tau\in T(A)
 \eneq
 and for all arcs $I_a$ with length at least $a\ge \eta,$
 where $\phi: C(\T)\to A$ is defined by $\phi(f)=f(u)$ for all $f\in C(\T)$ and
 \beq\label{Appu-2}
 |\tau(g(u))-\tau(g(v))|<\dt\tforal a\in {\cal G}\tand
\tforal \tau\in T(A),\\\nonumber
{[u]}={[v]}\,\,\, {\rm in} \,\,\, K_1(A)\tand  uv^*\in CU(A).
\eneq
Then there exists a unitary  $w\in U( A )$ such that
\beq\label{Appu-3}
\|w^*uw-v\|<\ep.
\eneq
\end{lem}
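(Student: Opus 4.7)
The plan is to reduce to Lemma \ref{LN51}, which handles the $TR\le 1$ case, by exploiting the ${\cal Z}$-stability of $A$. First I would apply Lemma \ref{LN51} to $\ep/4$ and $\Delta$ to produce auxiliary constants $\dt_0,\eta_0,\sigma_0$ and a finite subset ${\cal G}_0\subset C(\T)_{s.a.}$; the constants $\dt,\eta,{\cal G}$ of the present statement will then be chosen at least as restrictive. Since $A$ has stable rank one (inherited from ${\cal Z}$-stability and $TR(A\otimes U)\le 1$), Lemma \ref{Smrot} applied to $uv^*\in CU(A)$ produces a self-adjoint $a\in A$ with $\|a\|<\sigma_0$ and $\hat{a}-R_{u,v}\in\rho_A(K_0(A))$. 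This $\hat{a}$ will serve as a common logarithmic-trace target at the two UHF stages of the argument.

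Next, fix mutually prime supernatural numbers $\p,\q$ of infinite type such that $M_\p$ and $M_\q$ each absorb the UHF algebra $U$ for which $A\otimes U$ has $TR\le 1$; then both $A\otimes M_\p$ and $A\otimes M_\q$ have $TR\le 1$. Working inside $A\otimes{\cal Z}\cong A$, view $u,v$ as $u\otimes 1,v\otimes 1$. All hypotheses transfer from $A$ to $A\otimes M_\p$ and $A\otimes M_\q$ since traces on the tensor product restrict to traces on $A$ and functional calculus is preserved under tensoring. Apply Lemma \ref{LN51} in $A\otimes M_\p$ to choose $w_\p$ with $[w_\p]=0$ in $K_1(A\otimes M_\p)$, $\|u-w_\p^* v w_\p\|<\ep/4$, and ${1\over 2\pi i}\tau(\log(u^* w_\p^* v w_\p))=\hat{a}(\tau)$ for every $\tau$. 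Do the same in $A\otimes M_\q$ to obtain $w_\q$. Viewing both as elements of $A\otimes M_{\p\q}$, set $Z:=w_\q^* w_\p$. Then $Z$ almost commutes with $u$, $[Z]=0$ in $K_1(A\otimes M_{\p\q})$, and ${\rm bott}_1(u,Z)$ vanishes on traces, because the normalized logarithmic-trace data of $w_\p$ and $w_\q$ coincide on $T(A\otimes M_{\p\q})$ (both equal $\hat{a}$ restricted through $T(A)$).

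Any residual $\rho$-valued discrepancy in ${\rm bott}_1(u,Z)$ is then cancelled by multiplying $Z$ by a suitable almost-commuting unitary from Lemma \ref{botext}, available since ${\rm sp}(u)=\T$. Lemma \ref{homt} then yields a continuous path $\{v(t):t\in[0,1]\}$ in $A\otimes M_{\p\q}$ from $Z$ to $1$ with $\|[u,v(t)]\|$ small throughout; the required measure hypothesis is inherited from $u\in A$ under tensoring with $M_{\p\q}$. Setting $W(t):=w_\q v(t)$ produces a continuous path of unitaries in $A\otimes M_{\p\q}$ from $w_\p$ to $w_\q$ with $W(0)\in A\otimes M_\p$ and $W(1)\in A\otimes M_\q$. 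This is precisely a unitary in $A\otimes{\cal Z}_{\p,\q}\subset A\otimes{\cal Z}\cong A$, along which $\|u\otimes 1-W^*(v\otimes 1)W\|<\ep$ holds uniformly, giving the desired $w\in A$. The main obstacle is the simultaneous control of the $K_1$ class, the Bott invariant, and the measure and spectrum hypotheses across the two UHF stages — in particular, forcing ${\rm bott}_1(u,Z)=0$ by first aligning the de la Harpe--Skandalis determinants through the common target $\hat{a}$ and then correcting any $K_0$-level obstruction via Lemma \ref{botext}.
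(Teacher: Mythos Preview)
Your overall strategy matches the paper's: apply Lemma~\ref{LN51} at the two UHF fibers with a common determinant target, compare the two conjugating unitaries inside $A\otimes Q$, kill the Bott obstruction, invoke Lemma~\ref{homt} to interpolate, and thereby produce a unitary in $A\otimes{\cal Z}_{\p,\q}\subset A\otimes{\cal Z}\cong A$. But one step is genuinely incomplete.

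The issue is the sentence ``Any residual $\rho$-valued discrepancy in ${\rm bott}_1(u,Z)$ is then cancelled by multiplying $Z$ by a suitable almost-commuting unitary from Lemma~\ref{botext}.'' The obstruction $y={\rm bott}_1(u,Z)$ lives in $\ker\rho_{A\otimes Q}$, and the correction you describe is a single unitary $z'\in A\otimes Q$. After this, Lemma~\ref{homt} gives a path $v(t)$ from $Zz'$ to $1$, and your $W(t)=w_\q v(t)$ runs from $w_\p z'$ to $w_\q$ --- not from $w_\p$ to $w_\q$. Since $z'$ lies in $A\otimes Q$ and is in general not close to $1$, the left endpoint $w_\p z'$ has no reason to belong to $A\otimes M_\p$, so the path does not define an element of $A\otimes{\cal Z}_{\p,\q}$. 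You cannot simply take $z'$ in $A\otimes M_\p$ instead, because $y$ need not lie in the image of $\ker\rho_{A\otimes M_\p}$ inside $\ker\rho_{A\otimes Q}$.

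The paper closes this gap by the decomposition $\ker\rho_{A\otimes Q}=\ker\rho_A\otimes\Q$ together with $\Q=\Q_\p+\Q_\q$, which lets one write $y=y_\p-y_\q$ with $y_\fr\in\ker\rho_{A\otimes M_\fr}$. One then applies Lemma~\ref{botext} \emph{separately} in each fiber to obtain corrections $w_\p\in A\otimes M_\p$ and $w_\q\in A\otimes M_\q$, replaces $z_\fr$ by $W_\fr=z_\fr w_\fr$, and only then has ${\rm bott}_1(u,W_\p^*W_\q)=0$ with both endpoints in the correct fibers. This split correction is the missing idea; once you insert it, your outline becomes the paper's proof. (A minor additional point: the passage from a unitary in $A\otimes{\cal Z}$ back to $A$ requires the approximate innerness of the first-factor embedding, not just the abstract isomorphism $A\otimes{\cal Z}\cong A$; the paper spells this out in its last paragraph.)
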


\begin{proof}
We first note, by \cite{LN-2},
that $TR(A\otimes M_\fr)\le 1$ for any supernatural number $\fr.$
Let $\phi: C(\T)\to A$ be the monomorphism defined by
$\phi(f)=f(u).$
For any $a\in(0, 1)$, denote by
$$\Delta(a)=\inf\{\mu_{\tau\circ\psi}(O_a);\ \tau\in T(A), \textrm{$I_a$ an open arcs of length $a$ in $\T$}\}.$$ Since $A$ is simple, one has that $0<\Delta(a)\leq 1$ (for all $a\in (0,1)$) and $\Delta(a)\to 0$ as $a\to 0$. As in Proposition 11.1 of \cite{Lnappeqv},
\beq\label{Appu-n+1}
\mu_{\tau\circ \phi}(I_a)\ge \Delta(a)\tforal \tau\in T(A)
\eneq
and all arcs with length $a>0.$
Let $\ep>0.$

Let $\p$ and $\q$ be a pair of relatively prime supernatural numbers of infinite type with $\mathbb Q_\p+\mathbb Q_\q=\mathbb Q$. Denote by $M_\p$ and $M_\q$ the UHF-algebras associated to $\p$ and $\q$ respectively.
Let $\imath_\fr: A\to A\otimes M_\fr$ be the embedding defined by
$\imath_\fr(a)=a\otimes 1$ for all $a\in A,$ where $\fr$ is a supernatural number.
Define $u_\fr=\imath_\fr(u)$ and $v_\fr=\imath_\fr(v).$ Denote
by $\phi_{\fr}: C(\T)\to A\otimes M_{\fr}$ the \hm\, given by
$\phi_\fr(f)=f(u_\fr)$ for all $f\in C(\T).$

For any supernatural number $\fr=\p, \q$, the C*-algebra
$A\otimes M_\fr$ has tracial rank at most one.

Let $\dt_1>0$ (in place of $\dt$)  and $d_1>0$ (in place of $\sigma$) be required by \ref{homt}
for $\ep/6.$

Without loss of generality, we may assume that $\delta_1<\ep/12$ and is small enough and $\mathcal G$ is large enough so that
$\mathrm{bott}_1(u_1, z_j)$  and
$\mathrm{bott}_1(u_1, w_j)$ are well defined and
\beq\label{Appu-10}
\mathrm{bott}_1(u_1,w_j)=
\mathrm{bott}_1(u_1, z_1)+\cdots + \mathrm{bott}_1(u_1, z_j)
\eneq
if $u_1$ is a unitary and  $z_j$ is any unitaries with $\|[u_1, z_j]\|<\delta_1,$  where $w_j=z_1\cdots z_j,$ $j=1,2,3,4.$

Let $\dt_2>0$ (in place $\dt$) be require by \ref{botext} for $\dt_1/8$
(in place of $\ep$).

Furthermore, one may assume that $\delta_2$ is sufficiently small such that for any unitaries $z_1, z_2$ in a C*-algebra with tracial states, $\tau(\frac{1}{2\pi i}\log(z_iz_j^*))$ ($i, j=1,2,3$) is well defined and
\vspace{-0.1in}
$$
\tau(\frac{1}{2\pi i}\log(z_1z_2^*))=\tau(\frac{1}{2\pi i}\log(z_1z_3^*))+ \tau(\frac{1}{2\pi i}\log(z_3z_2^*))
$$
for any tracial state $\tau$, whenever $\|z_1-z_3\|<\delta_2$ and $\|z_2-z_3\|<\delta_2$.
We may further assume that $\dt_2<\min\{\dt_1, \ep/6,1\}.$

Let $\dt>0,$ $d_2>0$ (in place of $\eta$) and
$\dt_3>0$ (in place of $\sigma$) required by \ref{LN51} for $\dt_2$ (in place of $\ep$).

Let $\eta=\min\{d_1, d_2\}.$

Now assume that $u$ and $v$ are two unitaries which satisfy the assumption of the lemma with above $\dt$ and $\eta.$

Since $uv^*\in CU(A),$ $R_{u,v}\in \overline{\rho_A(K_0(A))}.$
It follows that there is $a\in \aff(T(A))$ with $\|a\|<\dt_3/2$ such that
$a-R_{u,v}\in \rho_A(K_0(A)).$ Then the image of $ a_\p-R_{u_\p,v_\p}$
is in $\rho_{A\otimes M_\p}(K_0(A\otimes M_\p))),$
 where $a_\p$ is the image of $a$ under the map
 induced by $\imath_\p.$ The same holds for $\q$.
Note that
\beq\label{Appu-19-1}
\mu_{(\tau\otimes t)\circ \phi_\fr}(I_a)\ge \Delta(a)\tforal \tau\in T(A)
\eneq
where $t$ is the unique tracial state on $M_\fr,$ and for all $a>0,$ $\fr=\p,\,\q.$
  By Lemma \ref{LN51} there {exist} unitaries $z_\p\in A\otimes M_\p$ and $z_\q\in A\otimes M_\q$ such that
 $$ [z_\fr]=0,\,\,\, {\rm in}\,\,\, K_1(A\otimes M_\fr),\,\,\, \fr=\p,\,\q,\,\,\,\|u_\p-z_\p^* v_\p z_\p\|<\delta_2
{\andeqn}
\|u_\q-z_\q^* v_\q z_\q\|<\delta_2.
$$
\vspace{-0.2in}
Moreover,
\beq\label{Appu19}
\tau(\frac{1}{2\pi i}\log(u_\p^* z_\p^* v_\p z_\p))
=a_\p(\tau)\tforal \tau\in \mathrm{T}(A_\p)
\andeqn \\
\tau(\frac{1}{2\pi i}\log(u_\q^* z_\q^* v_\q z_\q))
=a_\q(\tau)\tforal \tau\in\mathrm{T}(A_\q).
\eneq
We then identify $u_\p, u_q$ with $u\otimes 1$ and
$z_\p$ and $z_\q$ with the elements in $A\otimes M_\p\otimes M_\q=A\otimes Q.$ In the following computation, we also identify
$T(A)$ with $T(A_\p),$ $T(A_\q),$ and
$T(A_\p),$ or $T(A_\q)$ with $T(A\otimes Q)$
by identify $\tau$ with $\tau\otimes t,$ where $t$ is the unique tracial state on $M_\p,$ or $M_\q,$ or $Q.$
In particular,
\beq\label{Appu-20}
a_\p(\tau\otimes t)&=&\tau(a)\tforal \tau\in T(A)\andeqn\\
a_\q(\tau\otimes t)&=&\tau(a) \tforal \tau\in T(A)
\eneq

We compute that
by the Exel formula (see Lemma 3.5 of \cite{Lninv} ),
\begin{eqnarray}
(\tau \otimes t)(\mathrm{bott}_1(u\otimes 1, z_\p^* z_\q))&=&
 (\tau\otimes t)(\frac{1}{2\pi i}\log(z_\p^* z_\q(u^*\otimes 1)z_\q^* z_\p(u\otimes 1)))\\
&=&{(\tau\otimes t)(\frac{1}{2\pi i}\log(z_\q(u^*\otimes 1) z_\q^* z_\p(u\otimes 1)z_\p^*)}\\
&=&{(\tau\otimes t)(\frac{1}{2\pi i}\log(z_\q(u^*\otimes 1) z_\q ^*(v\otimes 1))}\\
&&+(\tau\otimes 1)({1\over{2\pi i}}\log((v^*\otimes 1)z_\p(u\otimes 1)
z_\p^*)\\
&=& (\tau\otimes t)(\frac{1}{2\pi i}\log(u_\q^* z_\q^* v_\q z_\q))\\
&&+(\tau\otimes t)({1\over{2\pi i}}\log(v_\p^* z_\p u_\p z_\p^*))\\
&=&\tau(a) -\tau(a)=0
\end{eqnarray}
for all $\tau\in T(A).$ It follows that
\beq\label{Appu-21}
\tau(\mathrm{bott}_1(u\otimes 1, z_\p^* z_\q))=0.
\eneq
for all $\tau\in T(A\otimes Q).$



Let $y=\mathrm{bott}_1(u\otimes 1, z_\p^* z_\q)\in \ker \rho_{A\otimes Q}.$  Since $\Q,$ $\Q_\p$ and $\Q_\q$ are flat $\Z$ modules, as in
 the proof of 5.3 of \cite{LN-3},
\beq\label{ker}
{\rm ker}\rho_{A\otimes Q}&=&{\rm ker}\rho_A\otimes \Q\\\label{ker-1}
{\rm ker}\rho_{A\otimes M_\fr}&=&{\rm ker} \rho_A\otimes \Q_\fr,\,\,\,\,\fr=\p\andeqn \fr=\q.
\eneq
It follows that there are $x_1, x_2,...,x_l\in \rho_A(K_0(A))$
and $r_1,r_2,...,r_l\in \Q$ such that
$$
y=\sum_{j=1}^l x_j\otimes r_j.
$$
Since $\Q=\Q_\p+Q_\q,$ one has $r_{j,\p}\in \Q_\p$ and
$r_{j, \q}\in \Q_\q$ such that
$r_j=r_{j,\p}-r_{j,\q}.$
So
$$
y=\sum_{j=1}^l x_j\otimes r_{j, \p}-\sum_{j=1}^l x_j\otimes r_{j, \q}.
$$
Put $y_\p=\sum_{j=1}^l x_j\otimes r_{j, \p}$ and $y_\q=\sum_{j=1}^l x_j\otimes r_{j, \q}.$
 Then, by (\ref{ker-1}),  $y_\p\in {\rm ker}\rho_{A\otimes M_\p}$ and
$y_\q\in {\rm ker}\rho_{A\otimes M_\q}.$
It follows from \ref{botext} that there are unitaries $w_\p\in A\otimes M_\p$ and $w_\q\in A\otimes M_\q$ such that
$[w_\fr]=0$ in $K_1(A\otimes M_\fr)$ ($\fr=\p,\q$),
\beq\label{Appu-22}
\|[u_\p\,, w_\p]\|<\dt_1/8,\,\,\, \|[v_\q,\, w_\q]\|<\dt_1/8,\\
{\rm bott}_1(u_\p, w_\p)=y_p\andeqn
{\rm bott}_1(u_\q, w_q)=y_\q.
\eneq

Put $W_\p=z_\p w_\p\in A\otimes M_\p$ and $W_\q=z_\q w_\q\in A\otimes M_\q.$ Then
\beq\label{Appu-23}
\|u_\p-W_\p^* v_\p W_\p\|<\dt_2+\dt_1/8<\ep/6\andeqn
\|u_\q-W_\q^* v_\q W_\q\|<\dt_2+\dt_1/8<\ep/6.
\eneq
Note, again,  that $u_\fr=u\otimes 1$ and $v_\fr=v\otimes 1,$  $\fr=\p,\,\q.$
With identification of $W_\fr, w_\fr, z_\fr$ with unitaries in $A\otimes Q,$
we also have
\beq\label{Appu-24}
\|[ u\otimes 1, \, W_\p^* W_\q]\|<\dt_1/4
\eneq
and
\beq\label{Appu-25}
{\rm bott}_1(u\otimes 1, W_\p^* W_\q) &=&
\mathrm{bott}_1(u\otimes 1, w_\p^*z^*_\p  z_\q  w_\q  )\\
&=&
 \mathrm{bott}_1(u\otimes 1, w^*_\p)+
 \mathrm{bott}_1(u\otimes 1, z^*_\p z_\q)+
 \mathrm{bott}_1(u\otimes 1, w_\q)\\
 &=& -y_\p +(y_\p-y_\q)+y_\q=0.
 \eneq
Let $Z_0=W_\p ^*W_\q.$ Then $Z_0\in U_0(A)$ (since $TR(A)\le 1$).  Then it follows from  the choice of $\dt_1,$ (\ref{Appu-19-1}) and \ref{homt} that there is a continuous path of unitaries $\{Z(t): t\in [0,1]\}\subset A\otimes Q$ such that
$Z(0)=Z_0$ and $Z(1)=1$ and
\beq\label{Appu-26}
\|[u\otimes 1,\, Z(t)]\|<\ep/6\tforal t\in [0,1].
\eneq
Define $U(t)=W_\p Z(t).$ Then $U(0)=W_\p$ and $U(1)=W_\q.$ So, in particular, $U(0)\in A\otimes M_\p$ and $U(1)\in A\otimes M_\q.$
So, $U\in A\otimes {\cal Z}_{\p, \q}\subset A\otimes {\cal Z}$ is a unitary and, by  (\ref{Appu-23}) and  (\ref{Appu-26}),
\beq\label{Appu-27}
\|u\otimes 1-U^*(v\otimes 1)U\|<\ep/3.
\eneq

Note that we assume that $A\otimes {\cal Z}\cong A.$ Let $\imath: A\to A\otimes {\cal Z}$ be the embedding defined by $\imath(a)=a\otimes 1$ for all $a\in A$ and $j: A\otimes {\cal Z}\to A$ such that $j\circ \imath$ is approximately inner.
Let $V\in A$ be a unitary such that
\beq\label{Appu-28}
\|c-V^*j\circ \imath(c)V\|<\ep/3\tforal c\in \{u, v\}.
\eneq
Then,  let $w=Vj(U)V^*\in U(A).$
\beq\label{Appu-29}
\hspace{-0.8in}\|u- w^*uw\| &\le & \|u-V^*j(u\otimes 1)V\|
+\|V^*j(u\otimes 1)V- V^*j(U)^*j(v\otimes 1)j(U)V\|\\
&& +\|V^*j(U)^*j(v\otimes 1)j(U)V-V^*j(U)^*VvV^*j(U)V\|\\
&<& \ep/3+\|u\otimes 1-U^*(v\otimes 1)U\|+\|j\circ \imath(v)-V^*vV\|\\
&<& \ep/3+\ep/3+\ep/3=\ep.
\eneq

\end{proof}

\begin{lem}\label{quasitrace}
Let $A$ be a unital separable simple \CA\, in ${\cal A}_1.$
Then every quasi-trace on $A$ extends to a trace. Moreover,
if in addition, $A$ is ${\cal Z}$-stable, then
$$
W(A)=V(A)\sqcup {\rm LAff}(T(A)),
$$
where $W(A)$ is the Cuntz semi-group of $A,$ $V(A)$ is the equivalence classes of projections in $\cup_{n=1}^{\infty} M_n(A)$ and
$LAff(T(A))$ is the set of all bounded real lower-semi-continuous affine functions on $T(A).$

\end{lem}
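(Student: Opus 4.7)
The plan is to prove the two assertions in sequence, bootstrapping the second from the first.

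First, for the quasi-trace assertion, let $\tau$ be a quasi-trace on $A$, and choose an infinite-dimensional UHF algebra $U$ such that $TR(A\otimes U)\le 1$, which exists by the definition of ${\cal A}_1$. I would form $\tau\otimes t$, where $t$ is the unique tracial state on $U$, and observe that this is a quasi-trace on $A\otimes U$ (the quasi-trace axioms survive tensoring with a trace by passing through matrix amplifications and taking limits). The key step is then to show that every quasi-trace on the simple unital C*-algebra $A\otimes U$ is automatically a trace. Because $TR(A\otimes U)\le 1$, this algebra is tracially approximated by unital subhomogeneous building blocks of the form $F_1\oplus pM_n(C([0,1]))p$, on which every quasi-trace is a trace (as these are type~I). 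The tracial approximation property then propagates the commutator identity $\tau(ab)=\tau(ba)$ to all of $A\otimes U$ via a standard $\ep/3$-argument. Alternatively, the subhomogeneous tracial approximation yields nuclearity (hence exactness) of $A\otimes U$, after which Haagerup's theorem applies directly. Restricting $\tau\otimes t$ to the embedded copy $A\otimes 1\subset A\otimes U$ then exhibits $\tau$ as a trace on $A$.

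Second, for the Cuntz semigroup decomposition, assume additionally that $A$ is ${\cal Z}$-stable. By a theorem of R\o rdam, ${\cal Z}$-stability combined with the identification of quasi-traces with traces (just established) yields strict comparison of positive elements by the tracial simplex $T(A)$. Together with ${\cal Z}$-stability, the Brown--Perera--Toms structure theorem for Cuntz semigroups of simple unital stably finite ${\cal Z}$-stable C*-algebras gives the claimed splitting
\[
W(A)=V(A)\sqcup \mathrm{LAff}(T(A)),
\]
where $V(A)$ parametrises Murray--von Neumann classes of projections and $\mathrm{LAff}(T(A))$ parametrises the purely positive classes (those whose Cuntz class contains no non-zero projection) via their rank functions.

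The main obstacle is the quasi-trace reduction: one must extract the quasi-trace-equals-trace property of $A\otimes U$ from the bare hypothesis $TR(A\otimes U)\le 1$, which requires setting up the subhomogeneous tracial approximation carefully and verifying that the approximation is tight enough to transfer the commutator identity (or to establish exactness for Haagerup). Once this is in place, the Cuntz semigroup decomposition reduces essentially to citing Brown--Perera--Toms applied in the presence of strict comparison and ${\cal Z}$-stability.
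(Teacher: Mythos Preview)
Your proposal is correct and follows essentially the same route as the paper: reduce to $A\otimes U$ where $TR\le 1$ forces quasi-traces to be traces, restrict back along $A\hookrightarrow A\otimes U$, and then invoke the Brown--Perera--Toms description of the Cuntz semigroup, noting that the exactness hypothesis there is only needed to turn quasi-traces into traces, which you have already secured. One caution: your second alternative for the key step (``tracial approximation yields nuclearity, then apply Haagerup'') is not available in this generality, since $TR(B)\le 1$ does not by itself imply nuclearity; stick with your first alternative, the direct $\ep/3$ transfer through the interval-algebra building blocks, which is exactly what the paper implicitly relies on.
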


\begin{proof}
Note that $TR(A\otimes Q)\le 1.$ Therefore every quasi-trace on $A\otimes Q$ is a trace. Suppose that $s$ is a quasi-trace on $A,$
then $s\otimes t$ is a trace on $A,$ where $t$ is the unique tracial state of $Q.$ Therefore $s\otimes t$ on $A\otimes \C 1_{Q}$ is a trace.
This implies that $s$ is a trace.

 The second part of the statement follows from (the proof of ) Cor. 5.7 of \cite{BPT}.
 In Cor. 5.7 of \cite{BPT}, $A$ is assumed to be also exact. But that was only used
 so that every quasi-trace is a trace.

\end{proof}

\begin{lem}\label{embedding}
Let $A\in {\cal A}_1$ be a unital separable simple ${\cal Z}$-stable \CA.
Let $\Gamma: C([0,1])_{s.a.}\to \Aff(T(A))$ be a continuous affine map with
$\Gamma(1)(\tau)=1$ for all $\tau\in T(A)$ for some $a\in A_+$ with $\|a\|\le 1.$
Then there exists a unital monomorphism $\phi: C([0,1])\to  A$ such that
$$
\tau(\phi(f))=\Gamma(f)(\tau)\tforal \tau\in T(A)
$$
and $f\in C([0,1]).$
\end{lem}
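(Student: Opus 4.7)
The plan is to translate $\Gamma$ into a prescribed family of spectral distributions and then realize it by functional calculus on a single self-adjoint element. For each $\tau\in T(A)$, the functional $f\mapsto\Gamma(f)(\tau)$ is positive of norm one on $C([0,1])$ (using $\Gamma(1)(\tau)=1$), so by Riesz--Markov it is represented by a Borel probability measure $\mu_\tau$ on $[0,1]$, and the assignment $\tau\mapsto\mu_\tau$ is affine and weak-$*$ continuous. The task reduces to producing a positive contraction $h\in A$ of spectrum $[0,1]$ whose spectral measure under each $\tau$ equals $\mu_\tau$; then $\phi(f):=f(h)$ is the desired unital monomorphism.

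For the construction I would first discretize. Partition $[0,1]$ into equal subintervals $I_{n,k}=[(k-1)/n,k/n]$ and consider the lower-semicontinuous affine functions $\tau\mapsto\mu_\tau(\mathrm{int}\,I_{n,k})$. By Lemma \ref{quasitrace}, $W(A)=V(A)\sqcup\mathrm{LAff}(T(A))$, so every bounded lower-semicontinuous affine function on $T(A)$ is realized as the rank function of some positive element of $A$. Combined with the strict comparison afforded by $\mathcal{Z}$-stability and standard cutting techniques, this yields, for each $n$, mutually orthogonal positive contractions $e_{n,1},\dots,e_{n,n}\in A$ with $\sum_k e_{n,k}=1$ and $|\tau(e_{n,k})-\mu_\tau(I_{n,k})|<1/n$ uniformly in $\tau$ and $k$. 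Setting
\[
h_n \;=\; \sum_{k=1}^{n} \tfrac{k-1}{n}\, e_{n,k},
\]
the spectral distribution of $h_n$ under any $\tau$ then approximates $\mu_\tau$ up to error of order $1/n$.

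To pass to a limit I would arrange the $h_n$ to be approximately unitarily equivalent with geometrically decreasing error across refining partitions, and then invoke an Elliott-type approximate intertwining. The required uniqueness ingredient is a self-adjoint analogue of Lemma \ref{Appu}: positive contractions in a unital simple $\mathcal{Z}$-stable $\mathcal{A}_1$-algebra with sufficiently close tracial spectral data are approximately unitarily equivalent. From the intertwining one obtains unitaries $u_n\in A$ with $\mathrm{Ad}(u_n)(h_n)$ norm-Cauchy, and its limit $h$ satisfies $\tau(f(h))=\int f\,d\mu_\tau=\Gamma(f)(\tau)$ for all $\tau\in T(A)$ and all $f\in C([0,1])$. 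The main obstacle I anticipate is this intertwining step: one must extract enough uniform control to close the iteration, and one must also guarantee $\mathrm{sp}(h)=[0,1]$ (equivalently, that each point of $[0,1]$ lies in $\mathrm{supp}(\mu_\tau)$ for some $\tau$, so that $\phi$ is genuinely injective rather than factoring through a proper quotient of $C([0,1])$).
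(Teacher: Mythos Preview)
Your outline diverges substantially from the paper's argument, and as written it has a real gap at the intertwining step.

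The paper does not construct $h$ directly in $A$ by discretization. Instead it passes to the UHF-stabilizations $A\otimes M_\p$ and $A\otimes M_\q$, which have tracial rank at most one; there the existence of self-adjoint elements $h_\p,h_\q$ with spectrum $[0,1]$ and $\tau(f(h_\fr))=\Gamma(f)(\tau)$ is already available from Theorem~8.4 of \cite{Lninv}. The two resulting monomorphisms $\phi_\p,\phi_\q:C([0,1])\to A\otimes Q$ have identical Elliott invariants (trivial $K$-theory of the source makes this immediate), so by the classification machinery (Theorem~10.7 of \cite{Lninv}) they are strongly asymptotically unitarily equivalent. The connecting path of unitaries then assembles a single monomorphism into $A\otimes\mathcal{Z}_{\p,\q}\subset A\otimes\mathcal{Z}\cong A$. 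No approximate intertwining is carried out.

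The gap in your plan is the uniqueness input. You invoke a ``self-adjoint analogue of Lemma~\ref{Appu}''---that two positive contractions in $A$ with close tracial spectral data are approximately unitarily equivalent---but you have not established this, and it is precisely the hard point. Such a statement is known when $TR(A)\le 1$, but here $A$ is only assumed $\mathcal{Z}$-stable and \emph{rationally} of tracial rank at most one. Lemma~\ref{Appu} itself is proved in this paper by the same $\mathcal{Z}_{\p,\q}$-gluing method just described; supplying your missing uniqueness step for self-adjoints would amount to reproducing that argument, at which point one may as well use it directly for the existence as the paper does. There is also a secondary gap at the construction stage: Lemma~\ref{quasitrace} controls the dimension function $d_\tau(a)$, not $\tau(a)$, so obtaining a finite orthogonal decomposition $\sum_k e_{n,k}=1$ with prescribed \emph{traces} $\tau(e_{n,k})\approx\mu_\tau(I_{n,k})$ is not immediate from the Cuntz-semigroup description and needs its own justification.
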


\begin{proof}
Let $\p$ and $\q$ be relatively prime supernatural numbers with $\Q_\p+\Q_\q=\Q.$
Let $M_\fr$ be the UHF-algebra associated with the supernatural number $\fr,$ $\fr=\p,\q.$ Let $Q$ be the UHF-algebra
such that $(K_0(Q), (K_0(Q)_+, [1_\Q])=(\Q, \Q_+, 1).$   By the assumption $TR(A\otimes M_\fr)\le 1$ and $TR(A\otimes Q)\le 1.$
It follows from 8.4 of \cite{Lninv} that are $h_\fr\in (A\otimes M_\fr)_{s.a.}$ such that ${\rm sp}(h)=[0,1]$
and $(\tau\otimes t)\circ \phi_\fr(f)=\Gamma(f)(\tau)$ for all $\tau\in T(A)$ and $f\in C([0,1])_{s.a.},$ where
$t$ is the unique tracial state on $M_\fr,$ $\fr=\p,\,\q.$
We use the same notation for $\phi_\fr$ for the unital monomorphisms $C([0,1])\to A\otimes M_{\fr}\to A\otimes Q$ composed by
$\phi_\fr$ and the embedding from $A\otimes M_\fr\to A\otimes Q.$
Note that $K_0(C([0,1]))=\Z$ and $K_1(C([0,1]))=\{0\}.$
Then $[\phi_\p]=[\phi_\q]$ in $KK(C([0,1]), A\otimes Q)$ and,   $\phi_\p$ and $\phi_\q$ induce the  same map from $T(A\otimes Q)$ into $T(C([0,1]))$  as well as
the same map from  $U(C([0,1])/CU(C([0,1])$ into $CU(A\otimes Q)/CU(A\otimes Q).$
Moreover since $K_1(C([0,1]))=\{0\}.$ They induce zero rotation map.
It follows from 10.7 of \cite{Lninv} that $\phi_\p$ and $\phi_\q$ are strongly asymptotically unitarily equivalent, i.e.,
there exists a continuous path of unitaries $\{u(t): t\in [0, 1)\}\subset A\otimes Q$ such that
$$
\lim_{t\to \infty} u(t)^*\phi_\p(f) u(t)=\phi_\q(f)\tforal f\in C([0,1]).
$$
Define $\psi: C([0,1])\to A\otimes Z_{\p,\q}$ by
$$
\psi(f)(t)=u(t)^*\phi_\p(f)u(t)\tforal t\in [0,1)\andeqn \psi(f)(1)=\phi_\q(f)\tforal f\in C([0,1]).
$$
Note $\psi(f)(0)=\phi_\p(f)\in A\otimes M_\p$ and $\psi(f)(1)=\phi_q(f)\in A\otimes M_\q$ for all $f\in C([0,1]).$
By embedding $A\otimes {\cal Z}_{\p,\q} $ into $A\otimes {\cal Z},$ we obtain a unital monomorphism $\phi: C([0,1])\to A\otimes {\cal Z}\cong A.$
It is easy to check that so defined $\phi$ meets the requirements.

\end{proof}

\begin{NN}\label{0pimeas}
{\rm
Let $A$ be a unital simple \CA\, with $T(A)\not=\emptyset.$
Let $u\in U(A)$ be a unitary with ${\rm sp}(u)=\T.$
For each $\tau,$ let $\mu_{\tau}$ be the Borel probability measure
on $\T$ induced by  state $\tau\circ f(u)$ (for all $f\in C(\T)$) on $\T.$
Fix $n\ge 1,$ let $\log: \{e^{it}: t\in [-\pi+\pi/n, \pi]\}\to[-\pi+\pi/n, \pi]$
be the usual logarithm  map.
Consider the measure $\nu_{\tau, n}$ on $(-\pi, \pi]$
defined by
$$
\nu_{\tau,n}(E)=\mu_{\tau}(\{e^{it}: t\in E\cap [-\pi+\pi/n, \pi]\})
$$
for all Borel sets $E\subset (-\pi, \pi].$
Define
$$
\nu_{\tau}(E)=\lim_{n\to\infty}\mu_{\tau, n}(E)
$$
for all Borel sets $E\subset (-\pi, \pi].$ It is easy to check
that $\nu_{\tau}$ is a measure on $(-\pi,\pi].$
Let $f\in C_0((-\pi, \pi])_{s.a.}$ defined
$$
\Gamma(f)(\tau)=\int_{(-\pi,\pi]} f d\nu_{\tau}.
$$
Note that
$$
\Gamma(f)(\tau)=\lim_{n\to\infty}\int_{(-\pi+\pi/n, \pi]} f\circ \log d\mu_{\tau}.
$$

Let $g_n(t)=1$ if $t\in [-\pi+\pi/n, \pi],$ $g_n(t)=0$ if $t\in [-\pi,-\pi+\pi/2n]$ and $g(t)$ is linear in $(-\pi+\pi/2n, -\pi+\pi/n).$ Note that $0\le g_n\le 1$ and
$g_n\in C(\T)_+.$
It is clear that $\Gamma(g_n)\in \Aff(T(A))$ and
$\Gamma(g_n)\le \Gamma(g_{n+1})$ and
$\Gamma(g_n)(\tau)\to 1$ for each $\tau\in \Aff(T(A)).$
It follows from the Dini theorem that $\Gamma(g_n)$ converges to $1$ uniformly on $T(A).$
On the other hand
\beq\label{erank-n}
|\int_{(-\pi,\pi)} f(1-g_n)d\nu_{\tau}| &\le & \int_{(-\pi, \pi]}|f|^2 d\nu_\tau \int_{(-\pi, \pi]}
(1-g_n)^2d\nu_{\tau}\\
&\le & \int_{(-\pi, \pi]}|f|^2 d\nu_\tau\int_{(-\pi, \pi]}
(1-g_n)d\nu_{\tau}\to 0
\eneq
uniformly on $T(A).$
This implies that $\Gamma(f)$ is continuous on $T(A).$
If $g\in C([-\pi ,\pi])_{s.a.},$ we may write
$g(t)=g(0)+(g(t)-g(0)).$  Define
$\Gamma(g)=g(0)+\Gamma(g-g(0)).$ This provides
an affine continuous map from $C([-\pi ,\pi ])_{s.a.}$ to $ \Aff(T(A)).$

We check that
$$
\tau(f(u))=\int_{(-\pi ,\pi]} f\circ \exp(it) d\nu_\tau(t)=\Gamma(f\circ \exp(it))(\tau) \tforal f\in C(\T)_{s.a.}.
$$

 In the above, we can replace $-\pi$ by $0$ and $\pi$ by $2\pi.$

We will keep this notation in the next proof.

}

\end{NN}

\begin{thm}\label{erank}
Let $A\in {\cal A}_1$ be a unital separable simple ${\cal Z}$-stable \CA.
Let $u\in U_0(A)$ be a unitary. Then, for any $\ep>0,$ there exists
a self-adjoint  element $h\in A$ such that
\beq\label{erank-1}
\|u-\exp(ih)\|<\ep.
\eneq
In the other words
$$
{\rm cer}(A)\le 1+\ep.
$$
\end{thm}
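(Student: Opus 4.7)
The plan is to reduce to Lemma \ref{Appu} by constructing, from a small perturbation of $u$, an exponential $\exp(ih)$ which matches the tracial spectral distribution, the $K_1$-class, and the de la Harpe--Skandalis determinant of that perturbation modulo $\overline{\rho_A(K_0(A))}$. The match of the determinant is precisely the $CU(A)$-condition in \ref{Appu}; it is also the source of the unbounded norm of $h$, since $\|h\|$ is forced to grow with the determinant.

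First, since $A$ is simple, $\mathcal{Z}$-stable, and $u\in U_0(A)$, I would perturb $u$ by less than $\ep/2$ to a unitary $u_0\in U_0(A)$ with $\mathrm{sp}(u_0)=\T$ satisfying a uniform measure lower bound $\mu_{\tau\circ\phi_{u_0}}(I_a)\ge\Delta(a)$ for all $\tau\in T(A)$ and all arcs $I_a$ of length $a$, where $\Delta$ is some non-decreasing gauge (cf.\ Proposition 11.1 of \cite{Lnappeqv}). Then pick $N\ge 1$ so that $N\pi$ exceeds $\sup_\tau 2\pi|R_{u_0,1}(\tau)|$, which is finite because $u_0\in U_0(A)$. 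Using \ref{0pimeas}, lift the family $\mu_{\tau\circ\phi_{u_0}}$ on $\T$ to a continuous affine family of probability measures $\nu_\tau$ on $[-N\pi,N\pi]$ such that $(\exp)_*\nu_\tau=\mu_{\tau\circ\phi_{u_0}}$ and
\[
\int t\,d\nu_\tau(t)\equiv -2\pi R_{u_0,1}(\tau)\pmod{\overline{\rho_A(K_0(A))}}.
\]
The pushforward condition leaves freedom to redistribute mass among the $N$ preimage sheets of $\exp\colon[-N\pi,N\pi]\to\T$, and this freedom suffices to hit the first-moment target. Feeding $\Gamma(f)(\tau):=\int f\,d\nu_\tau$ into Lemma \ref{embedding} (in the obvious form on $[-N\pi,N\pi]$) produces a unital monomorphism $\psi\colon C([-N\pi,N\pi])\to A$ with $\tau\circ\psi=\Gamma$. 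Set $h:=\psi(\iota)$, where $\iota(t)=t$.

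Finally, I would verify the hypotheses of \ref{Appu} for $(u_0,\exp(ih))$: both unitaries lie in $U_0(A)$ so they share the zero $K_1$-class; the trace distributions agree because $\tau(f(\exp(ih)))=\int(f\circ\exp)\,d\nu_\tau=\tau(f(u_0))$ for every $f\in C(\T)$; the measure lower bound on $u_0$ is preserved; and $u_0\exp(-ih)\in CU(A)$ because the first-moment condition forces $R_{u_0,\exp(ih)}(\tau)\in\overline{\rho_A(K_0(A))}$, which combined with the stable-rank-one property of $\mathcal{Z}$-stable $A$ yields membership in $CU(A)$. Lemma \ref{Appu} then supplies a unitary $w\in U(A)$ with $\|w^*u_0w-\exp(ih)\|<\ep/2$; setting $h':=whw^*$ gives a self-adjoint element with $\exp(ih')=w\exp(ih)w^*$, hence $\|u-\exp(ih')\|<\ep$, i.e.\ ${\rm cer}(A)\le 1+\ep$. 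The main obstacle I anticipate is the construction of the lift $\nu_\tau$ in Step 2: it must be simultaneously continuous and affine in $\tau$, have the correct pushforward, and realize the prescribed first moment. My plan there is to write $\nu_\tau$ as a convex combination of a ``principal-branch'' lift supported on $[-\pi,\pi]$ and a correction consisting of $2\pi k$-shifts of small pieces of mass, with correction coefficients chosen as affine functions of $\tau$ solving the first-moment equation.
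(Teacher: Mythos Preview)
Your overall strategy coincides with the paper's: construct a self-adjoint $h$ so that $v=\exp(ih)$ matches $u$ in tracial spectral data and in de la Harpe--Skandalis determinant (forcing $uv^*\in CU(A)$), then invoke Lemma \ref{Appu} to conjugate $v$ close to $u$. The difference is in how $h$ is built. You propose a single-step construction: lift the family $\mu_\tau$ on $\T$ to probability measures $\nu_\tau$ on a long interval $[-N\pi,N\pi]$ with the pushforward constraint \emph{and} a prescribed first moment, then realize this lift via Lemma \ref{embedding}. The paper instead separates the two tasks. It takes essentially the principal-branch lift $\Gamma$ on $[0,2\pi]$ from \ref{0pimeas}, scaled down by a factor $1-\sigma$, and realizes it by $b\in A_{s.a.}$ via \ref{embedding}; the scaling forces $d_\tau(b)\le 1-\sigma$, and the Cuntz-semigroup computation of \ref{quasitrace} together with strict comparison then guarantees orthogonal room in $A$ for two small correction elements $b_1,b_2$. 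Setting $h_1=b+(6/\sigma)b_1-(6/\sigma)b_2$ with $\tau(b_1)=\sigma\,\mathrm{Det}(u)(\tau)/6$ and $\tau(b_2)=\sigma\,\tau(b)/6$ gives $\tau(h_1)=\mathrm{Det}(u)(\tau)$ exactly, while the orthogonality ensures the spectral-trace data of $\exp(ih_1)$ differ from those of $u$ only by $O(\sigma)$, well within the tolerance of \ref{Appu}. This sidesteps precisely the obstacle you flag: no affine multi-sheet lift with prescribed first moment is needed, because the determinant is adjusted by an orthogonal summand rather than by redistributing mass among sheets. Your route can probably be completed (affine coefficients on finitely many $2\pi k$-shifted copies of the principal lift solve the first-moment equation), but you would still need to arrange strict positivity of the resulting $\Gamma$ so that \ref{embedding} yields a genuine monomorphism; the paper's construction avoids all of this. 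Incidentally, your initial perturbation step is unnecessary: if $\mathrm{sp}(u)\ne\T$ the theorem is immediate, and if $\mathrm{sp}(u)=\T$ then Proposition 11.1 of \cite{Lnappeqv} already supplies $\Delta$ for $u$ itself.
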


\begin{proof}
Let $u\in U_0(A).$ If ${\rm sp}(u)\not=\T,$ then $u$ is an exponential.
So we may assume that ${\rm sp}(u)=\T.$
Let $\ep>0.$ Let $\phi: C(\T)\to A$ be defined by $\phi(f)=f(u)$ for all $f\in C(\T).$ It is a unital monomorphism.
It follows from Proposition 11.1 of \cite{Lnappeqv} that there is a non-decreasing function
$\Delta_1: (0,1)\to (0,1)$ such that
\beq\label{exrank-1}
\mu_{\tau}(O_a)\ge \Delta_1 (a)\tforal \tau\in T(A)
\eneq
for all arcs $I_a$ of $\T$ with length $a\in (0,1).$
Define $\Delta=(1/2)\Delta_1.$

Let $\eta>0,$ $\dt>0$ and let ${\cal G}\subset C(\T)$ be a finite subset
required by \ref{Appu} for $\ep/2$ (in place of $\ep$). Without loss of generality, we may assume that $\|g\|\le 1$ for all $g\in {\cal G}.$
Let $\sigma=\min\{\eta/2, \dt/2\}.$

Let $\Gamma: C([0, 2\pi])_{s.a.}\to \Aff(T(A))$ be the map defined in
\ref{0pimeas} (using $[0, 2\pi]$ instead of $[-\pi,\pi]$).
Define $\Gamma_1: C([0,2\pi])_{s.a.}\to \Aff(T(A))$  as follows: define
$$
\Gamma_1(f)(\tau)=(1-\sigma)\Gamma(f)(\tau)\tforal \tau\in T(A)
$$
and for all $f\in C_0((0, 2\pi])$ and define
$$
\Gamma_1(f)=f(0)+\Gamma_1(f-f(0))(\tau)\tforal
\tau\in T(A)
$$
and for all $f\in C([0,2\pi])_{s.a.}.$
It follows that, for any $f\in C(\T)_{s.a.}$ with $\|f\|\le 1,$
\beq\label{erank-n}
|\tau(f)-\Gamma_1(f\circ \exp)|=|\Gamma(f\circ \exp)-\Gamma_1(f\circ \exp)|<\sigma\tforal \tau\in T(A),
\eneq
where $\exp: [0, 2\pi]\to \T$ is defined by $\exp(t)=e^{it}$ for all $t\in [0,2\pi].$
Note since $A$ is simple and ${\rm sp}(u)=\T,$
$\Gamma_1$ is strictly positive.
It follows \ref{embedding}  that there is a self-adjoint  element
$b\in A$ such that ${\rm sp}(b)=[0, 2\pi]$ and
$\tau(f(b))=\Gamma_1(f)(\tau)$ for all $f\in C_0((0,2\pi]).$
It follows that
\beq\label{erank-1}
d_{\tau}(b)=\lim_{n\to\infty}\tau(b^{1/n})\le (1-\sigma)\tforal \tau\in T(A).
\eneq

Note that since $A$ is also ${\cal Z}$-stable, by \ref{quasitrace},
$W(A)=V(A)\sqcup {\rm LAff}(T(A)).$
There  are mutually orthogonal  elements $ a_1, c_1, c_2 \in M_K(A)_+$  with $0\le a_1, c_1, c_2\le 1$ for some integer $K\ge 1$ such
that
\beq\label{erank-2}
d_{\tau}(a_1)=1-\sigma/2, d_{\tau}(c_1)=d_{\tau}(c_2)=\sigma/5\tforal \tau\in T(A).
\eneq
Put $a_2=a_1+c_1+c_2.$ Note
that $0\le  a_2\le 1$ and
\beq\label{erank-2+n}
d_{\tau}(a_2)=1-{9\sigma\over{10}}<1\tforal \tau\in T(A).
\eneq
By the strict comparison, (\ref{erank-1}), (\ref{erank-2+n})   and the fact that $A$ has stable rank one, we may assume, without loss of generality,  that
$$
a_2\in A\andeqn b\in \overline{a_1M_K(A)a_1}.
$$
Suppose that
\beq\label{erank2+}
{\rm Det}(u)(\tau)=s(\tau)\tforal \tau\in T(A)
\eneq
for some $s\in \Aff(T(A)).$

The above argument also shows that there are $b_1\in \overline{c_1Ac_1}$ and $b_2\in \overline{c_2Ac_2}$ such
that
\beq\label{erank-3}
\tau(b_1)=\sigma s(\tau)/6\andeqn \tau(b_2)=\sigma\tau(b)/6\tforal \tau\in T(A).
\eneq
Let $$h_1={-6b_2\over{\sigma}}+{6b_1\over{\sigma}}+b.$$
Note that
\beq\label{erank-3+1}
\tau(h_1)=(6/\sigma)\tau(b_1)=s(\tau)\tforal \tau\in T(A).
\eneq
Define
$v=\exp(ih_1).$  One checks, by (\ref{erank-3+1}),  that
\beq\label{erank-4}
{\rm Det}(v)={\rm Det}(u)
\eneq
Therefore
\beq\label{erank-5}
uv^*\in CU(A).
\eneq
Let ${\rm sp}(h_1)\subset [-m_1\pi, m_2\pi]$ for some integers $m_1, m_2\ge 0.$
Note that $f(b_j)\in \overline{c_jAc_j}$ if $f\in C({\rm sp}(b_j)),$  $j=1,2.$ So, if, in addition,
$\|f\|\le 1,$  by (\ref{erank-2}),
\beq\label{erank-5+1-1}
|\tau(f(b_j))|<\sigma/5\tforal \tau\in T(A).
\eneq
We have, for any $f\in C([-m_1\pi, m_2\pi])_{s.a.}$ with $\|f\|\le 1,$  by (\ref{erank-5+1-1}),
\beq\label{erank-5+1}
|\tau(f(h_1))-\Gamma_1(f|_{[0, 2\pi]})|&=&|(\tau(f(b))+\tau(f(b_1))+\tau(f(b_2))-\Gamma_1(f|_{[0, 2\pi]})|\\\label{erank-5+2}
&= & \sigma/5+\sigma/5+|\tau(f(b))-\Gamma_1(f_{[0,2\pi]})|=2\sigma/5
\eneq
for all $\tau\in T(A).$
Therefore,  by (\ref{erank-n}),   (\ref{erank-5+1}) and (\ref{erank-5+2}), that
\beq\label{erank-6}
|\tau(g(v))-\tau(g(u))|<\sigma+2\sigma/5<\dt \tforal g\in {\cal G}
\eneq
It follows from \ref{Appu} that there exits a unitary $w\in U(A)$ such that
$$
\|u-w^*vw\|<\ep.
$$
Let  $h=w^*h_1w.$
Then
$$
\|u-\exp(ih)\|<\ep.
$$

\end{proof}

\begin{cor}\label{Zexpl}
Let ${\cal Z}$ be the Jiang-Su algebra.
Then
$$
{\rm cer}({\cal Z})=1+\ep.
$$
\end{cor}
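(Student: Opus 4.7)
The plan is to combine Theorem \ref{erank} (for the upper bound) with the fact that the Jiang-Su algebra is projectionless (for the strict inequality).

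For the upper bound, I would simply verify the hypotheses of Theorem \ref{erank} for $A={\cal Z}$. By Definition \ref{trrank}, ${\cal Z}\in{\cal A}_0\subset{\cal A}_1$, since $TR({\cal Z}\otimes U)=0$ for any infinite-dimensional UHF-algebra $U$ (the tensor product is itself a UHF-algebra). Also ${\cal Z}\otimes{\cal Z}\cong{\cal Z}$, so ${\cal Z}$ is ${\cal Z}$-stable. Theorem \ref{erank} then yields, for every $u\in U_0({\cal Z})$ and every $\epsilon>0$, a self-adjoint $h\in{\cal Z}$ with $\|u-\exp(ih)\|<\epsilon$. Hence ${\rm cer}({\cal Z})\le 1+\epsilon$.

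For the opposite inequality ${\rm cer}({\cal Z})\not\le 1$, I would exhibit a unitary in $U_0({\cal Z})$ that is not exactly equal to $\exp(ih)$ for any $h\in{\cal Z}_{s.a.}$. Using Lemma \ref{embedding} with $\Gamma$ chosen so that $\tau\circ\phi$ corresponds to integration against Lebesgue measure on $\T$, I would produce a unital monomorphism $\phi\colon C(\T)\to{\cal Z}$ and set $u=\phi(z)$, where $z$ is the canonical generator of $C(\T)$. This $u$ has full spectrum $\T$. The obstruction to $u=\exp(ih)$ is the standard one for projectionless simple \CA s: a simple unital \CA\ not of real rank zero cannot have exponential rank equal to $1$ (this is the content, for instance, of Theorem 3.5 of \cite{Ph-1} and the remarks surrounding it in the introduction of the present paper), and ${\cal Z}$ is certainly not of real rank zero since it is projectionless.

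The upper bound is essentially automatic given Theorem \ref{erank}; the only substantive point is the lower bound, and I would dispatch it by appealing to the non-real-rank-zero obstruction for ${\cal Z}$ rather than by attempting to construct a concrete $h$ directly and argue an obstruction from its spectrum.
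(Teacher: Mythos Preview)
The paper gives no proof of this corollary; it is placed immediately after Theorem \ref{erank} and treated as an immediate consequence, with the lower bound tacitly taken as known. Your verification of the hypotheses of Theorem \ref{erank} for ${\cal Z}$ and the resulting inequality ${\rm cer}({\cal Z})\le 1+\ep$ is correct and is exactly what the paper intends.

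Your argument for the strict inequality ${\rm cer}({\cal Z})\not\le 1$, however, has two concrete gaps. First, the citation is misplaced: as the introduction of the present paper states explicitly, Theorem~3.5 of \cite{Ph-1} is the assertion that ${\rm cel}(A)=\infty$ for unital simple AH-algebras of slow dimension growth which are not of real rank zero; it is a statement about exponential \emph{length}, not about ${\rm cer}(A)>1$. Second, the specific unitary you propose does not witness the lower bound. Lemma \ref{embedding} produces a unital monomorphism $\phi\colon C([0,1])\to{\cal Z}$, not one from $C(\T)$; the unitary you obtain is then $u=\exp\bigl(2\pi i\,\phi(\mathrm{id}_{[0,1]})\bigr)$, which is visibly an element of $\exp(i{\cal Z}_{s.a.})$. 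More generally, full spectrum alone is no obstruction in ${\cal Z}$: self-adjoint elements of ${\cal Z}$ have connected spectrum (since ${\cal Z}$ is projectionless), and any $h$ with ${\rm sp}(h)\supset[a,a+2\pi]$ yields $\exp(ih)$ with spectrum all of $\T$. Thus neither the construction nor the cited result establishes that some $u\in U_0({\cal Z})$ fails to be an exact exponential. The paper does not supply such a witness here either; if you wish to justify the equality rather than merely the upper bound, you will need a different construction or a correct reference.
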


We will prove much stronger result than the above for ${\cal Z}$ (see \ref{NNZlT}).

\section{Exponential length in $CU(A)$}


The following is known (something similar could be found in
\cite{Thoms-2} and \cite{Ph-fu}).
We state here for the convenience.

\begin{lem}\label{CET}
Let $u$ be a unitary in $C([0,1], M_n).$
Then, for any $\ep>0,$ there exist continuous functions
$h_j\in C([0,1])_{s.a.}$ such that
$$
\|u-u_1\|<\ep,
$$
where $u_1=\exp(i \pi H),$ $H=\sum_{j=1}^n h_jp_j$ and $\{p_1, p_2,...,p_n\}$ is a set of mutually orthogonal rank one projections in $C([0,1], M_n),$
and $\exp(i \pi h_j(t))\not=\exp(i\pi h_k(t))$ if $j\not=k$ for all
$t\in [0,1].$
Moreover,  suppose that $u(0)=\sum_{j=1}^n \exp(i a_j)p_j(0)$ for some real number $a_j$ which are distinct,
we may assume that $h_j(0)=a_j.$

 Furthermore,
if ${\rm det}(u(t))=1$ for all $t\in [0,1],$ then
we may also assume that ${\rm det}(u_1(t))=1$ for all $t\in [0,1].$
\end{lem}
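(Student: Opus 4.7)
The plan is to approximate $u$ by a unitary $u_1(t)$ whose spectrum is simple at every point $t\in [0,1]$, and then to diagonalize $u_1$ with continuously varying eigenvalues and spectral projections. The underlying fact is that the locus $\Delta \subset U(n)$ of unitaries with a repeated eigenvalue is a closed real algebraic subvariety of codimension three, so $U(n)\setminus\Delta$ is open, dense and path connected, and a generic continuous path in $U(n)$ can be perturbed off $\Delta$ entirely.

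Concretely, fix a partition $0=t_0<t_1<\cdots <t_N=1$ fine enough that $u$ oscillates by less than $\ep/10$ on each subinterval. Replace each $u(t_k)$ by a unitary $v_k\in U(n)\setminus\Delta$ within distance $\ep/4$, and on each $[t_{k-1},t_k]$ join $v_{k-1}$ to $v_k$ by a short path that stays in $U(n)\setminus\Delta$; this is possible because $U(n)\setminus\Delta$ is open and, in a small neighborhood of the straight segment from $v_{k-1}$ to $v_k$, path connected. The resulting continuous $u_1\colon [0,1]\to U(n)\setminus\Delta$ satisfies $\|u-u_1\|<\ep$.

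Once $u_1$ has simple spectrum everywhere, its rank one spectral projections and unitary eigenvalues vary continuously. Formally, the map sending ordered tuples of distinct unitary scalars and pairwise orthogonal rank one projections to their sum is a covering of $U(n)\setminus\Delta$ with deck group $S_n$; lifting the path $u_1$ through this covering, and then using the simple connectedness of $[0,1]$ to further lift each eigenvalue $\lambda_j(t)$ to a continuous real function $h_j$ with $\lambda_j(t)=\exp(i\pi h_j(t))$, yields the required form $u_1=\exp(i\pi H)$ with $H=\sum h_j p_j$ and $\exp(i\pi h_j(t))\ne\exp(i\pi h_k(t))$ for $j\ne k$. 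The boundary clause is arranged by leaving the perturbation equal to the identity near $t=0$, legitimate under the hypothesis that $u(0)\notin\Delta$, and choosing the initial values of the lift consistently with the prescribed $a_j$.

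For the determinant refinement, if $\det u(t)\equiv 1$ then the $u_1$ produced above satisfies $|\det u_1(t)-1|<\ep'$ uniformly. Pick a continuous $\psi\colon [0,1]\to\R$ with $e^{i\psi(t)}=\det u_1(t)$ and $\psi(0)=0$, and replace $u_1(t)$ by $e^{-i\psi(t)/n}u_1(t)$. This amounts to shifting each $h_j$ by the common continuous function $-\psi(t)/(n\pi)$, which preserves the spectral decomposition, respects the approximation bound, and forces $\det u_1(t)=1$ throughout. The main technical point in the whole argument is the codimension-three density statement that lets a one-parameter path be kept inside $U(n)\setminus\Delta$; the rest is covering space bookkeeping and a small scalar correction.
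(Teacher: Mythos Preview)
Your argument is correct and follows the same route as the paper: perturb to a unitary with everywhere simple spectrum, diagonalize continuously, lift the eigenvalues to real-valued logarithms, and adjust for the boundary and determinant conditions. The only difference is that the paper outsources the key perturbation step (density of simple-spectrum unitaries and preservation of $\det=1$) to Lemma~2.5 of \cite{Ph-fu}, whereas you supply the codimension-three transversality argument and the scalar correction $e^{-i\psi(t)/n}$ explicitly; both are standard and your more self-contained version is fine.
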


\begin{proof}
The last part of the statement follows from Lemma 2.5 of \cite{Ph-fu}.
By Lemma 2.5, if ${\rm det}(u(t))=1$ for all $t\in [0,1],$ then
we can choose $u_1$ such that
$\|u-u_1\|<\ep$ and ${\rm det}(u_1(t))=1$ for all $t\in [0,1]$ and
$u(t)$ has distinct eigenvalues.
Therefore $u_1=\sum_{j=1}^n z_j(t)p_j(t),$
where $p_j(t)\in C([0,1], M_n)$ is a rank one projection,
$\sum_{j=1}^kp_j=1$ and
$z_j(t)\in C([0,1])$ with $|z_j(t)|=1$ for all $t\in [0,1]$
(This is standard, see, for example, the proof of 2.6 of \cite{Ph-fu}).
Let $z_j(t)=e^{ia(0)}$ for some real number $a(0).$ But
$z_j(t)=e^{ib_j(t)}$ for some real $b_j\in C([0,1]),$ $j=1,2,...,n.$
Note that $a_j(0)-b_j(0)=2k\pi$ for some integer $k.$
By replacing $b_j$ by $a_j(t)=h_j(t)+(a_j(0)-h_j(0)).$ Then
$z_j(t)=e^{ia_j(t)}$ and $z_j(0)=a_j(0),$ $j=1,2,...,n.$
In particular,
$$
u_1(t)=\sum_{j=1}^n e^{ia_j(t)}p_j(t)\tforal t\in [0,1].
$$

\end{proof}

%
%

\begin{lem}\label{Det=1} {\rm  (cf. Section 3 of \cite{GLN})}
Let $u\in C([0,1], M_n)$ be a unitary with ${\rm det}(u)(t)=1$ for each $t\in [0,1].$
Then, for any $\ep>0,$ there exists a self-adjoint  element $h\in C([0,1], M_n)$ such that
$\|h\|\le 1,$ $\tau(h)=0$ for each $\tau\in T(C([0,1], M_n)$  and
$$
\|u-\exp(i2\pi h)\| <\ep.
$$
In particular ${\rm length}(u)\le 2\pi.$
\end{lem}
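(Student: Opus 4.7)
The plan is to use Lemma \ref{CET} to put $u$ in approximately diagonal form $u_1 = \exp(i\pi H)$ with continuously varying orthogonal rank-one projections and distinct eigenvalues, and then to convert this into the form $\exp(i 2\pi h)$ with $h$ traceless and $\|h\| \le 1$, by combining integer shifts of the diagonal entries with narrow ``swap'' rotations of the projection basis at carefully chosen times, paralleling the approach of Section~3 of \cite{GLN}.

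First I apply Lemma \ref{CET} in its determinant-preserving form to produce $u_1 \in C([0,1], M_n)$ with $\|u - u_1\| < \ep/2$ and $u_1 = \exp(i\pi H)$, where $H = \sum_{j=1}^n h_j p_j$ has continuous real eigenvalues $h_j$ and continuous rank-one orthogonal projections $p_j$, the numbers $e^{i\pi h_j(t)}$ are pairwise distinct, and $\det u_1(t) = 1$ for all $t$. The determinant condition forces $\sum_j h_j(t) \in 2\mathbb{Z}$, hence by continuity equals a constant even integer $2m$. I then seek $h$ of the shape $\tfrac{1}{2}\sum_j(h_j + 2\ell_j) p_j$ with integers $\ell_j$ chosen so that $\sum_j \ell_j = -m$ and $|h_j(t) + 2\ell_j| \le 2$ for every $j$ and $t$; these conditions would give $\exp(i 2\pi h) = u_1$ identically, $\|h\| \le 1$, and $\tau(h) = 0$ for every $\tau \in T(C([0,1], M_n))$.

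The main obstacle is that the oscillation of some $h_j$ may exceed $4$, in which case no constant even-integer shift can fit it inside $[-2, 2]$. I would handle this by subdividing $[0,1]$ into finitely many pieces $[t_{i-1}, t_i]$ so fine that each $h_j$ has oscillation less than $4$ on each piece, choosing compatible integer shifts $\ell_j$ on each piece, and at each junction $t_i$ inserting a narrow window $[t_i - \delta, t_i + \delta]$ on which I continuously rotate a pair of projections $p_j$ whose eigenvalues are close (modulo $2$) at $t_i$, swapping their labels so as to absorb the discrepancy between the integer shifts chosen on the two sides of $t_i$. During such a rotation window the two eigenvalues of $h$ being swapped stay close to half-integer values, so $\exp(i 2\pi h(t))$ lies within $O(\delta)$ of $\pm I$; the same is true of $u_1(t)$, because the two nearly-equal eigenvalues of $u_1$ at $t_i$ force that part of $u_1(t)$ to be nearly scalar there. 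With $\delta$ chosen small relative to $\ep/n$, the total approximation error from the rotation windows stays below $\ep/2$, while the balance $\sum_j \ell_j = -m$ ensures trace-zero pointwise. This yields $\|u - \exp(i 2\pi h)\| < \ep$ with $\|h\| \le 1$ and $\tau(h) = 0$ for all $\tau$, and the length bound ${\rm length}(u) \le 2\pi$ follows from $\|2\pi h\| \le 2\pi$.
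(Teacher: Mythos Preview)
Your subdivision-and-swap scheme is unnecessary and, as written, has a gap; you are missing the simple continuity argument the paper actually uses.

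After Lemma~\ref{CET} produces $u_1=\exp(i\pi H)$ with $H=\sum_j h_j p_j$, distinct eigenvalues $e^{i\pi h_j(t)}$ at every $t$, and $\det u_1(t)=1$, the paper does not subdivide $[0,1]$ at all. The key observation is that the spread $S(t)=\max_j h_j(t)-\min_j h_j(t)$ is continuous and can never equal $2$: if it did, the indices realizing the max and min would satisfy $h_j(t)-h_k(t)=2\in 2\Z$, forcing $e^{i\pi h_j(t)}=e^{i\pi h_k(t)}$. The paper exploits the ``moreover'' clause of Lemma~\ref{CET} to prescribe the initial values $h_j(0)=a_j$: starting from the principal branches $b_j\in(-1/2,1/2]$ (which sum to an integer $k$ since $\det u_1(0)=1$), one shifts $|k|$ of them by $\pm 1$ so that $\sum_j a_j=0$ and $S(0)<2$. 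By continuity $S(t)<2$ for all $t$; since $\sum_j h_j(t)$ is a continuous integer-valued function equal to $0$ at $t=0$, it vanishes identically, and together with $S(t)<2$ this forces $|h_j(t)|<2$ for every $j$ and $t$. Taking $h=\frac{1}{2}\sum_j h_j p_j$ then gives $\|h\|<1$, $\tau(h)=0$ for every $\tau$, and $\exp(i2\pi h)=u_1$ exactly, so the only error is the initial $\|u-u_1\|$.

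In particular, the scenario you worry about --- ``the oscillation of some $h_j$ may exceed $4$'' --- cannot occur once the initial values are chosen this way: every $h_j$ is trapped in $(-2,2)$ for all $t$. That is precisely the idea your argument is missing. As for the swap construction itself, it is not well specified: at each junction $t_i$ you require a pair of eigenvalues of $u_1$ that nearly coincide, but Lemma~\ref{CET} guarantees the eigenvalues are distinct at every point, and nothing in your setup forces any two of them to be close at the particular $t_i$ you chose; nor do you explain why the eigenvalues of $h$ during the rotation window stay near half-integers, or why $\|h\|\le 1$ and $\|u_1-\exp(i2\pi h)\|<\ep/2$ survive the transition.
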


\begin{proof}
The proof is taken from the section 3 of \cite{GLN}.
First, by \ref{CET}, without loss of generality, we may assume that $u(0)$ has distinct eigenvalues.
Suppose that
$$
u(0)=\sum_{j=1}^n \exp(i2\pi b_j)p_j(0),
$$
where $b_j\in (-1/2,1/2],$ $j=1,2,...,n.$

Then $\sum_{j=1}^n b_j=k$ for some integer $k.$
Since $b_j\in (-1/2, 1/2],$  $k\le n.$
Keep in mind that $b_j$ are distinct.
If $k\ge 1,$ to simplify notation, we may assume
that $b_j>0,$ $j=1,2,...,k,$ $b_{k+l}<b_{k}< b_j$
for $j< k$ and $l>0.$
Define $a_j=b_j-1,$ $j=1,2,...,k$ and $a_j=b_j,$ $j>k.$
Then
\beq\label{nN-1}
\sum_{j=1}^n a_j=0 \andeqn |a_j|<1.
\eneq
Note that $\max_ja_j< b_{k}.$ Since $b_j>-1/2,$ $\min_ja_j=b_k-1.$
Therefore, we also have
\beq\label{nN-2}
\max_j a_j-\min_ja_j<1.
\eneq
If $k<-1,$  we may assume that $b_j<0,$ $j=1,2,...,k,$
$b_{k+l}\ge b_k>b_j$ for $j\le k$ and $l>0.$ Define $a_j=b_j+1,$ $j=1,2,...,k$ and $a_j=b_j$ if $j>k.$
Then (\ref{nN-1}) and (\ref{nN-2}) also hold in this case.

By \ref{CET}, we may assume, without loss of generality, that
\beq\label{Det=1-1}
u(t)=\sum_{j=1}^n \exp(i \pi h_j(t))p_j(t),
\eneq
where $h_j(t)\in C([0,1])_{s.a.}$ and $\{p_1,p_2,...,p_n\}$ is a set of mutually orthogonal rank one projections. Moreover, we may assume
that ${\rm det}(u(t))=1$ for all $t\in [0,1]$ and
$u(t)$ has distinct eigenvalues at each point $t\in [0,1].$
 Furthermore, by \ref{CET}, we may also assume
that $h_j(0)=a_j,$ $j=1,2,...,n.$
We also have that $|h_j(0)|<1,$
\beq\label{det=1-3}
\sum_{j=1}^n h_j(0)=0\andeqn \max_jh_j(0)-\min_jh_j(0)<1.
\eneq
Since ${\rm det}(u(t))=1$ for all $t\in [0,1],$
\beq\label{det=1-4}
\sum_{j=1}^n h_j(t)\in \Z \rforal t\in [0,1].
\eneq
Since $\sum_{j=1}^nh_j(t)\in C([0,1]),$ it follows that it is a constant. By (\ref{det=1-3}),
\beq\label{det=1-5}
\sum_{j=1}^n h_j(t)=0\tforal t\in [0,1].
\eneq
Since  $u(t)$ has distinct eigenvalues, $h_j(t)-h_k(t)\not\in \Z,$ for any $t\in [0,1]$ when $j\not=k. $  We also have
$\max_j h_j(t)-\min_j h_j(t)$ is a continuous function. It follows from (\ref{det=1-3}) that
\beq\label{det=1-7}
0<\max_j h_j(t)-\min_j h_j(t)<1\rforal t\in [0,1].
\eneq
Now by (\ref{det=1-5}), either $h_j(t)=0$ for all $j,$ which is not possible, since $u(t)$ has $n$ distinct eigenvalues,
or, for some $j,$ $h_j(t)<0$  and for some other $j',$ $h_{j'}>0,$  it follows from (\ref{det=1-7}) that
\beq\label{det=1-8}
|h_j(t)|<1\tforal t\in [0,1].
\eneq
Now let $h=\sum_{j=1}^n h_j\in C([0,1], M_n)_{s.a.}.$ Then
\beq\label{det=1-9}
\|h\|<1,\tau(h)=0\tforal \tau\in T(A)\andeqn u=\exp(i2\pi h).
\eneq

\end{proof}

We will use the following theorem (Theorem 10.8 of \cite{Lnappeqv}).

\begin{thm}\label{QT}
Let $\ep > 0$ and  let $\Delta: (0, 1) \to  (0, 1)$ be a non-decreasing map. There exists $\sigma>0,\, \dt > 0$ and
a finite subset ${\cal G}\in C(\T)_{s.a.}$  satisfying
the following:
Suppose that $A$ is a unital separable simple \CA\, with tracial rank no more than one
and suppose that $u, v\in U(A)$ are two unitaries such that
\beq\label{QT-1}
\mu_{\tau\circ \phi}(I_a)\ge \Delta(a)
 \tforal  \tau\in T(A),
 \eneq
 and for all intervals $I_a$ with length at least $\sigma,$
 where $\phi: C(\T)\to A$ is defined by $\phi(f)=f(u)$ for all $f\in C(\T),$
 \beq\label{QT-2}
&& |\tau(g(u))-\tau(g(v))|<\dt\tforal a\in {\cal G},
\tforal \tau\in T(A),\\\nonumber
&&{[u]}={[v]}\,\,\, {\rm in} \,\,\, K_1(A)\tand {\rm dist}(\bar{u}, {\bar v})<\dt,
\eneq
Then there exists a unitary  $w\in U( A )$ such that
\beq\label{QT-3}
\|w^*uw-v\|<\ep.
\eneq
\end{thm}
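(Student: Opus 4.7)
My plan is to prove this by combining three classical ingredients in Lin's approach to uniqueness: tracial approximation of $A$ by interval subalgebras coming from $TR(A)\le 1$, a pointwise spectral matching argument inside the approximating subalgebra, and a determinant-correction step that exploits the hypothesis $\dist(\bar u,\bar v)<\dt$. First I would choose the finite set $\mathcal G\subset C(\T)_{s.a.}$ and the constant $\dt$ so that matching traces on $\mathcal G$ within $\dt$ forces the two spectral measures $\mu_{\tau\circ\phi_u}$ and $\mu_{\tau\circ\phi_v}$ on $\T$ to be close in a weak-$*$ sense strong enough to use later (e.g.\ taking $\mathcal G$ to be a partition of unity subordinate to a very fine cover of $\T$ together with a few moment functions). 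The lower bound $\mu_{\tau\circ\phi_u}(I_a)\ge\Delta(a)$ for $a\ge\sigma$ (with $\sigma$ chosen much smaller than the mesh of that cover) then guarantees that $u$ has no heavy atoms or large spectral gaps, which is what makes small perturbations of $u$ possible without upsetting the measure.

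Next I would use $TR(A)\le 1$: for any finite subset $\mathcal F\supset\mathcal G\cup\{u,v\}$ and any $\ep'>0$ there is a projection $p\in A$ and a unital $C^*$-subalgebra $B\subset pAp$ that is a finite direct sum of interval algebras $C([0,1],M_{n_k})$ (with $1_B=p$) such that $\|xp-px\|<\ep'$ and $\dist(pxp,B)<\ep'$ for $x\in\mathcal F$, while $\tau(1-p)<\ep'$ for every $\tau\in T(A)$. I would replace $u$ and $v$ by unitary perturbations $u_0\oplus u_1$ and $v_0\oplus v_1$ with $u_0,v_0\in B$ and $u_1,v_1\in(1-p)A(1-p)$. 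The key step is then internal to $B$: decompose each summand using Lemma \ref{CET} so that $u_0(t)$ and $v_0(t)$ are written as sums $\sum_j \exp(i\pi h_j(t))p_j(t)$, and match up the eigenvalue branches via the approximate equality of the spectral measures (the lower bound $\Delta$ ensures that at each $t$ the eigenvalues of $u_0(t)$ and $v_0(t)$ stay well separated and can be paired continuously in $t$). This produces a unitary $w_0\in B$ with $w_0^*u_0w_0\approx v_0$.

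The two remaining obstructions are the complementary corner $(1-p)A(1-p)$, which is handled by a standard argument using the $K_1$ matching and smallness of $\tau(1-p)$ to build a unitary $w_1$ in the corner with $w_1^*u_1w_1\approx v_1$, and a global mismatch of the de la Harpe--Skandalis determinant. The latter is where $\dist(\bar u,\bar v)<\dt$ is essential: since $\bar u=\bar v$ modulo a small element of $U(A)/CU(A)$, the class $R_{u,v}$ is close to $\rho_A(K_0(A))$, and this slack can be absorbed by multiplying $w_0$ by a suitable central correction in $B$ (a unitary of the form $\exp(ih)$ with $\hat h$ representing the determinant discrepancy). Gluing $w_0$ and $w_1$ yields the required $w\in U(A)$. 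The main obstacle I expect is precisely this last reconciliation: the pointwise construction inside $B$ must be carried out in a way that leaves enough flexibility to also match the determinant class globally, and this requires a careful bookkeeping of how the chosen branches of $\log$ in Lemma \ref{CET} contribute to $R_{u,v}$. Everything else is routine given the tracial approximation and the uniqueness machinery for $TR\le 1$ algebras.
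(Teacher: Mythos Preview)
The paper does not give a proof of Theorem~\ref{QT}; it is quoted verbatim as Theorem~10.8 of \cite{Lnappeqv} and used as a black box. So there is no ``paper's own proof'' against which to compare your proposal.

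That said, your sketch is in the right spirit---the proof in \cite{Lnappeqv} does proceed by tracial approximation, reduction to interval algebras, and a spectral/determinant matching argument---but a couple of the steps you describe would not go through as written. First, your treatment of the complementary corner $(1-p)A(1-p)$ is too optimistic: there is no reason one can conjugate $u_1$ close to $v_1$ inside that corner, and the $K_1$ hypothesis does not help locally. The actual argument does not try to match $u_1$ and $v_1$ at all; instead one uses the smallness of $\tau(1-p)$ only to make the trace discrepancy between $u_0,v_0$ and $u,v$ negligible, and then relies on the interval-algebra uniqueness (together with enough spectral density coming from $\Delta$) to absorb the corner into the main part. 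Second, your ``central correction'' in $B$ does not make sense: each summand $C([0,1],M_{n_k})$ has center $C([0,1])$, and multiplying $w_0$ by a scalar-valued unitary does nothing to $w_0^*u_0w_0$. The determinant obstruction is handled not by adjusting $w$, but by first perturbing one of $u_0,v_0$ (using the hypothesis $\dist(\bar u,\bar v)<\dt$) so that their de~la~Harpe--Skandalis determinants agree before the spectral matching is performed; this is where the bott/rotation bookkeeping enters. Finally, the eigenvalue-matching step inside $C([0,1],M_n)$ is considerably more delicate than Lemma~\ref{CET} alone---one needs a marriage-lemma argument together with the measure lower bound $\Delta$ to guarantee a continuous pairing, and this is the technical core of \cite{Lnappeqv}.
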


\begin{cor}\label{QTcor} {\rm (cf. Cor 3.2 of \cite{Lnexp2})}
Let $A$ be a unital separable simple \CA\, with $TR(A)\le 1.$
Let $u\in U(A)$ be a unitary with ${\rm sp}(u)=\T.$
Then, for any $\ep>0,$ there exists $\dt>0$ and $\gamma>0$ and an integer $N\ge 1$ satisfying the following:
if $v\in U(A),$
\beq\label{cqt-1}
[u]=[v],\,\, {\rm dist}({\bar u}, {\bar v})<\gamma\andeqn
|\tau(u^k)-\tau(u^k)|<\dt\tforal \tau\in T(A),
\eneq
$k=\pm 1,\pm 2,...,\pm N,$ there exists a unitary $w\in U(A)$ such that
\beq\label{cqt-2}
\|u-w^*vw\|<\ep.
\eneq
\end{cor}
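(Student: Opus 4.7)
The plan is to deduce Corollary \ref{QTcor} directly from Theorem \ref{QT}, bridging the gap between its continuous test set ${\cal G}\subset C(\T)_{s.a.}$ and the discrete moment data $\tau(u^k),\tau(v^k)$ for $|k|\le N$ via Stone-Weierstrass. Since trigonometric polynomials are dense in $C(\T)$, each $g\in{\cal G}$ can be uniformly approximated by a finite Laurent polynomial in $z$, so controlling finitely many moments $\tau(u^k),\tau(v^k)$ forces $\tau(g(u))-\tau(g(v))$ to be small for every $g$ in the chosen set.

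First I would produce the modulus $\Delta$: because $A$ is simple and $\mathrm{sp}(u)=\T$, Proposition 11.1 of \cite{Lnappeqv} (used exactly as in the proof of Theorem \ref{erank}) supplies a non-decreasing $\Delta\colon(0,1)\to(0,1)$ with $\mu_{\tau\circ\phi}(I_a)\ge\Delta(a)$ for every $\tau\in T(A)$ and every arc $I_a$, where $\phi(f)=f(u)$. Feed $\ep$ and this $\Delta$ into Theorem \ref{QT} to obtain constants $\sigma>0$, $\dt'>0$ (in place of $\dt$), and a finite subset ${\cal G}\subset C(\T)_{s.a.}$.

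Next, by Stone-Weierstrass I pick a single integer $N\ge 1$ and, for each $g\in{\cal G}$, a Laurent polynomial $p_g(z)=\sum_{|k|\le N}c_{g,k}z^k$ with $\|g-p_g\|_\infty<\dt'/3$. Set $M=\max_{g\in{\cal G}}\sum_{|k|\le N}|c_{g,k}|$, $\gamma=\dt'$, and $\dt=\dt'/(3M+1)$. A triangle-inequality estimate then bounds $|\tau(g(u))-\tau(g(v))|$ by $2\|g-p_g\|_\infty+M\dt<\dt'$ for every $g\in{\cal G}$ and every $\tau\in T(A)$. All remaining hypotheses of \ref{QT}, namely $[u]=[v]$ in $K_1(A)$, the distance bound on ${\bar u},{\bar v}$, and the measure lower bound on $\phi$, are in hand; replacing the unitary produced by \ref{QT} with its adjoint converts the output $\|w^*uw-v\|<\ep$ into the desired $\|u-w^*vw\|<\ep$.

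The main obstacle is purely one of quantifier order: the integer $N$ and the polynomials $p_g$ must be selected from ${\cal G}$, which depends only on $\ep$ and on $\Delta$, and $\Delta$ depends only on $u$ (via the Borel measures $\mu_{\tau\circ\phi}$), never on $v$ or on the verification data in the hypothesis. Once this is respected the argument is routine, and the corollary is in essence a moment-problem reformulation of \ref{QT} with no further C*-algebraic content beyond invoking that theorem.
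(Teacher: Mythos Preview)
Your proposal is correct and is exactly the intended argument.  The paper does not supply its own proof of Corollary \ref{QTcor}; it simply states the result with a reference to Cor.~3.2 of \cite{Lnexp2}, and your derivation from Theorem \ref{QT} via Proposition 11.1 of \cite{Lnappeqv} and Stone--Weierstrass is the standard reduction.
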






\begin{lem}\label{Ttr1}
Let $A$ be a unital separable simple \CA\, with $TR(A)\le 1$ and let $u\in CU(A)$ be a unitary.
 Then, for any $\ep>0,$ there exists
a self-adjoint  element $h\in A_{s.a.}$ with $\|h\|\le 1$ such that
$\tau(h)=0$ for all $\tau\in T(A)$ and
$$
\|u-\exp(i2\pi h)\|<\ep.
$$
Moreover, if ${\rm sp}(u)=\T,$ we may assume that ${\rm sp}(h)=[-1, 1].$
As a consequence,
$$
{\rm cel}_{CU}(A)\le 2\pi.
$$

\end{lem}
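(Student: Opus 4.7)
The plan is to construct a target self-adjoint $h_0\in A$ so that $v:=\exp(i2\pi h_0)$ shares the relevant tracial invariants of $u$, then apply the uniqueness theorem \ref{QTcor} to find a unitary $w\in U(A)$ with $\|u-w^*vw\|<\ep$. The conjugate $h:=w^*h_0w$ will satisfy the conclusion, because norm, spectrum and all tracial values are preserved under unitary conjugation.

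The case ${\rm sp}(u)\neq \T$ I would dispatch first: a continuous branch of logarithm on an arc containing ${\rm sp}(u)$ gives $u=\exp(i2\pi h_1)$ with $\|h_1\|\le 1$, and since $u\in CU(A)$ the de la Harpe--Skandalis determinant $R_{u,1}$ lies in $\overline{\rho_A(K_0(A))}$, so \ref{Smrot} lets me absorb a small self-adjoint correction to force $\tau(h)=0$ for all traces while keeping the norm bound.

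For the main case ${\rm sp}(u)=\T$, let $\mu_\tau$ denote the spectral measure of $u$ on $\T$, varying continuously in $\tau$. The exponential $[-1,1]\to \T$, $t\mapsto e^{i2\pi t}$, is two-to-one, so each fiber's mass can be split between its two preimages by a parameter $\alpha\in[0,1]$; since the constant choices $\alpha\equiv 0$ and $\alpha\equiv 1$ produce averages of opposite sign, an intermediate value argument yields a probability measure $\nu_\tau$ on $[-1,1]$ pushing forward to $\mu_\tau$ and satisfying $\int t\,d\nu_\tau=0$, varying continuously in $\tau$. This defines a strictly positive continuous affine map $\Gamma:C([-1,1])_{s.a.}\to \Aff(T(A))$ with $\Gamma(1)\equiv 1$ and $\Gamma(t)\equiv 0$. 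Invoking the existence theorem for unital simple \CA s with $TR(A)\le 1$ (the $TR\le 1$ analogue of \ref{embedding}, classical in Lin's program and not requiring $\mathcal Z$-stability) realizes $\Gamma$ by a unital monomorphism $\phi:C([-1,1])\to A$; setting $h_0:=\phi(\mathrm{id}_{[-1,1]})$ gives ${\rm sp}(h_0)=[-1,1]$, $\|h_0\|\le 1$, and $\tau(f(h_0))=\Gamma(f)(\tau)$, so in particular $\tau(h_0)=0$.

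Setting $v:=\exp(i2\pi h_0)$, by construction the spectral distribution of $v$ at $\tau$ is the push-forward of $\nu_\tau$ under $t\mapsto e^{i2\pi t}$, which is $\mu_\tau$; hence $\tau(g(v))=\tau(g(u))$ for all $g\in C(\T)$ and all $\tau\in T(A)$. Since $\tau(h_0)=0$, the path $\exp(i2\pi t h_0)$ has zero determinant, so $v\in CU(A)$; combined with $u\in CU(A)$ this gives $[u]=[v]=0$ in $K_1(A)$ and ${\rm dist}(\overline{u},\overline{v})=0$, so the hypotheses of \ref{QTcor} hold. The resulting $w\in U(A)$ gives $\|u-w^*vw\|<\ep$, and $h:=w^*h_0w$ meets all three requirements; the bound ${\rm cel}_{CU}(A)\le 2\pi$ follows immediately by rewriting the exponent as $2\pi h$, of norm at most $2\pi$. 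I expect the main technical obstacle to be the existence step producing $h_0$: one must invoke (or, in context, re-derive from Lin's earlier results) a realization theorem giving a unital \hm\ $C([-1,1])\to A$ with prescribed strictly positive affine image in $\Aff(T(A))$ for unital simple \CA s with $TR(A)\le 1$ but without the $\mathcal Z$-stability hypothesis that \ref{embedding} uses.
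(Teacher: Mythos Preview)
Your endgame—build a target $h_0$ with the right tracial data and then apply \ref{QTcor}—is the same as the paper's, but your construction of $h_0$ is genuinely different, and as written it has a real gap.

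The paper does not build $h_0$ measure-theoretically. It uses the $TR(A)\le 1$ structure directly: write $u$ as a finite product of commutators, approximate it by a unitary $v_1$ inside a large interval subalgebra $B\cong\bigoplus_i C(X_i,M_{r(i)})$ which is itself a product of commutators in $B$ (hence has pointwise determinant $1$), and then apply the elementary interval-algebra Lemma~\ref{Det=1} to get $b\in B_{s.a.}$ with $\|b\|\le 2\pi$, $t(b)=0$, and $v_1\approx p\exp(ib)$. A trace-zero perturbation supported in the small complementary corner $(1-p)A(1-p)$ forces the spectrum to be all of $[-2\pi,2\pi]$. This route is internal to $A$ and rests on Lemma~\ref{Det=1}; no realization theorem is invoked.

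The gap in your route is the claim that the intermediate-value construction ``defines a \ldots\ continuous affine map $\Gamma:C([-1,1])_{s.a.}\to\Aff(T(A))$.'' With a \emph{constant} splitting parameter $\alpha\in[0,1]$, the mean-zero condition forces $\alpha_\tau=1-\int_{0}^{1}s\,d\mu_\tau(s)$, which depends on $\tau$. Then $\Gamma(f)(\tau)$ is a product of two affine functions of $\tau$ and is therefore generally quadratic, not affine; so $\Gamma(f)\notin\Aff(T(A))$ and the realization theorem you want cannot be applied. (A two-trace example with $\mu_{\tau_1}=\delta_{e^{i\pi/2}}$, $\mu_{\tau_2}=\delta_{e^{i3\pi/2}}$ and $f(t)=t^2$ already shows the failure.) The remedy is to let the splitting depend on the \emph{point} rather than on $\tau$: for $e^{i2\pi s}$ with $s\in[0,1)$, put proportion $1-s$ of the mass at $s$ and proportion $s$ at $s-1$. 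Then $\Gamma(f)(\tau)=\tau(F(u))$ with $F(e^{i2\pi s})=(1-s)f(s)+sf(s-1)$, which is continuous on $\T$, so $\Gamma(f)\in\Aff(T(A))$; one checks $\Gamma(\mathrm{id}_{[-1,1]})=0$ and that $\Gamma$ restricts correctly on $C(\T)\circ\exp$. With this fix your approach goes through (the ``$TR\le 1$ analogue of \ref{embedding}'' you need is 8.4 of \cite{Lninv}), but it trades the paper's self-contained use of Lemma~\ref{Det=1} for a heavier external input.
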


\begin{proof}
If ${\rm sp}(u)\not=\T,$ then $u=\exp (i g(u))$ where $g$  is
a continuous branch of logarithm with $\|g\|\le 2\pi.$
Thus we  may assume that ${\rm sp}(u)=\T.$
Without loss of generality, we may assume that
$$
u=u_1u_2\cdots u_k,
$$
where each $u_j$ is a commutator of $U(A).$  As in 6.9 of \cite{Lntr1}, $u_j\in U_0(A).$
It follows from 6.9 of \cite{Lntr1} again that  there are $h_j\in A_{s.a.}$ such that
\beq\label{Ttr1-1}
u=\prod_{j=1}^m \exp(i h_j)
\eneq
with $\sum_{j=1}^m\|h_j\|\le 8\pi+1.$

Let $\ep>0.$
Let $\gamma>0,$ $\dt>0$  and $N$ be as required by \ref{QTcor} for $\ep/4$ (in place of $\ep$).
We may assume that $\dt<\ep/4.$ Let  $d=\min\{\dt, \gamma, \ep/2\}.$
Since $TR(A)\le 1,$ there
exists a projection $p\in A$ and a \SCA\, $B\in A$ with $1_B=p$
such that $B\cong \oplus_{i=1}^m C(X_i, M_{r(i)}),$ where
$X_i=[0,1]$ or a point, and
\beq\label{ex3-2}
&&\|px-xp\|<{d\over{2^7(\pi+1)Nmk}} \tforal x\in
\{u, u_j: j\le 1\le k \}\\\label{ex3-2+1}
&&\|(1-p)u(1-p)-(1-p)\prod_{j=1}^m\exp(i(1-p)h_j(1-p))\|<
{d\over{2^7(\pi+1)Nmk}}, \\\label{ex3-2+2}
&& pup,\,pu_jp\in_{{d\over{2^7(\pi+1)Nmk}}} B\andeqn
\tau(1-p)<{d\over{16(\pi+1)Nmk}}\tforal \tau\in T(A).
\eneq

There exists a  unitary $v_1\in B$ such that
\beq\label{ex3-3}
\|pup-v_1\|<{\dt\over{2^6(\pi+1)Nm}}
\eneq
which is a product of $k$ commutators in $B.$
Put  $v_2=(1-p)\prod_{j=1}^k\exp(i(1-p)h_j(1-p)).$ Since  $v_1\in
B$ and $v_1$ is a product of commutators, in each summand of $B,$ determinant of $v_1$ at every point must be one.
It follows from  \ref{Det=1} that there exists a self-adjoint  element $b\in
B_{s.a}$ such that
\beq\label{ex3-4}
\|b\|\le 2\pi,\,\,\,t(b)=0\tforal  t\in T(B)\andeqn\\
\|u_1-p\exp(ib)\|<{d\over{2^7(\pi+1)Nm}}.
\eneq

We may assume that $1-p\not=0.$ Since $(1-p)A(1-p)$ is simple and
has (SP), we obtain two mutually orthogonal and mutually equivalent
projections $e_1, e_2\in (1-p)A(1-p).$ Suppose that $z\in U((1-p)A(1-p))$
such that $z^*e_1z=e_2.$ Let $b_0\in (e_1Ae_1)_{s.a.}$ with
${\rm sp}(b_0)=[-2\pi, 2\pi].$ Let $b_1=b_0-z^*b_0z$ and let
$b_2=b_1+b.$  Note that
\beq\label{Ttr1-10}
{\rm sp}(b_2)=[-2\pi, 2\pi]\andeqn \tau(b_2)=0\tforal \tau\in T(A).
\eneq
Define
$$
v_0=(1-p-e_1-e_2)+(p+e_1+e_2)\exp(i b_2).
\andeqn u_0=p\exp(ib)+v_2.
$$
Then, by
(\ref{ex3-2}), (\ref{ex3-2+1}), (\ref{ex3-3}) and (\ref{ex3-4}),
\beq\label{ex3-5-1}
\|u_0-u\|&<& \|(p\exp(ib)-pup)+(u_2-(1-p)u(1-p))\|\\
&&+\|(pup-(1-p)u(1-p))-u\|\\
\label{ex3-5+2}\label{Tr1-n}
&<&({d\over{2^6(\pi+1)N}}+{d\over{2^7(\pi+1)N}})+{2d\over{2^7N}}<
{5d\over{2^7(\pi+1)N}}.
\eneq
and
\beq\label{ex3-5}
u_0v_0^*=\prod_{j=1}^k \exp(i(1-p)h_j(1-p))\exp(-i b_1).
\eneq
Note that
\beq\label{ex3-6}
|\tau(\sum_{j=1}^k (1-p)h_j(1-p))|&\le &\sum_{j=1}^k |\tau((1-p)h_j(1-p))|\\
&=&\tau(1-p)(\sum_{j=1}^k\|h_j\|+2\pi)<5d/2^7N
\eneq
for all $\tau\in T(A).$
It follows that
\beq\label{ex3-7}
{\rm dist}({\bar u_0}, {\bar v_0})<d/4N\,\,\,{\rm in}\,\,\, U_0(A)/CU(A).
\eneq
It follows from (\ref{ex3-5+2}) and (\ref{ex3-7}) that
\beq\label{ex3-8}
{\rm dist}({\bar u}, {\bar v_0})<d/2N.
\eneq
On the other hand, for each $s=1,2,...,N,$ by  (\ref{ex3-5}), (\ref{ex3-5+2}) and (\ref{ex3-2+2})
\beq\label{ex3-9}
|\tau(u^s)-\tau(v_0^s)|&\le &
|\tau(u^s)-\tau(u_0^s)|+|\tau(u_0^s)-\tau(v_0^s)|\\\label{ex3-9+}
&\le & \|u^s-u_0^s\|+2\tau(1-p)\\
& \le & N\|u-u_0\|+2  \tau(1-p)\\
&<& {5d\over{2^7(\pi+1)}}+{d\over{(\pi+1)N}}<d
\eneq
for all $\tau\in T(A).$ From the above inequality and  (\ref{ex3-8})
and applying \ref{QTcor}, one obtains a unitary $W\in U(A)$ such
that
\beq\label{ex3-10}
\|u-W^*v_0W\|<\ep.
\eneq

Put $h=W^*b_2W.$  Then
\beq\label{ex3-11}
&&{\rm sp}(h)=[-2\pi, 2\pi],\tau(b)=0\tforal \tau\in T(A)\andeqn\\
&&\|u-\exp(ih)\|<\ep.
\eneq

\end{proof}

\begin{thm}\label{MT1}
Let $A\in {\cal A}_1$ be a unital separable simple ${\cal Z}$-stable \CA. Suppose that $u\in CU(A).$ Then, for any $\ep>0,$ there exists a self-adjoint element $h\in A$ with $\|h\|<1$ such that
\beq\label{MT1-1}
\|u-\exp(i2\pi h)\|<\ep
\eneq
In particular, ${\rm cel}_{CU}(A)\le 2\pi.$
\end{thm}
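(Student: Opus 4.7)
The plan is to follow the strategy of Theorem \ref{erank}, namely to build a model unitary $v = \exp(i 2\pi h_1) \in A$ matching $u$ on the invariants controlled by Lemma \ref{Appu} and then invoke \ref{Appu}, but with crucial norm control on $h_1$. The case ${\rm sp}(u) \neq \T$ is trivial: a continuous branch of $\log$ on ${\rm sp}(u)$ takes values in an open interval of length $2\pi$, so $h = (1/2\pi)\log u$ satisfies $\|h\| < 1$ and $u = \exp(i 2\pi h)$. I therefore assume ${\rm sp}(u) = \T$.

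Construct $h_1$ as follows. Using \ref{0pimeas} with the logarithm branch $(-\pi, \pi]$, associate to $u$ a family of Borel probability measures $\nu_\tau$ on $(-\pi, \pi]$ matching the spectral measures of $u$; rescale by $1/(2\pi)$ to push them onto $(-1/2, 1/2]$ and package the result as a continuous affine map $\Gamma: C([-1,1])_{s.a.} \to \Aff(T(A))$. After a small positivity perturbation $\Gamma_1$ (exactly as in the proof of Theorem \ref{erank}), Lemma \ref{embedding}, whose proof goes through verbatim with $[-1,1]$ in place of $[0,1]$, yields a unital monomorphism $\phi: C([-1,1]) \to A$; set $h_1 = \phi(\id)$. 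Automatically $\|h_1\| \le 1$, and $|\tau(g(v)) - \tau(g(u))| < \delta$ for all $g$ in the finite set ${\cal G} \subset C(\T)$ prescribed by Lemma \ref{Appu}, where $v = \exp(i 2\pi h_1)$.

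The delicate point, and where the hypothesis $u \in CU(A)$ is essential, is ensuring $v u^* \in CU(A)$, equivalently ${\rm Det}(v) \equiv {\rm Det}(u) \pmod{\rho_A(K_0(A))}$. Since $u \in CU(A)$, $R_{u,1} = {\rm Det}(u) \in \overline{\rho_A(K_0(A))}$, so Lemma \ref{Smrot} supplies an arbitrarily small $a \in A_{s.a.}$ with $\hat{a} - {\rm Det}(u) \in \rho_A(K_0(A))$. Using the Cuntz-semigroup decomposition of Lemma \ref{quasitrace} (available by ${\cal Z}$-stability), I add to $h_1$ two mutually orthogonal small-norm correction terms $b_1, b_2$, analogous to those of Theorem \ref{erank} but with coefficients of order $O(\|a\|)$ rather than $O(1/\sigma)$, so that $2\pi \tau(h_1) \equiv \tau(a) \pmod{\rho_A(K_0(A))}$ for every $\tau \in T(A)$. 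The smallness of $a$ guarantees these corrections do not push $\|h_1\|$ past $1$, and combined with $u \in CU(A)$ this gives $v \in CU(A)$.

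With $v$ now satisfying all the hypotheses of Lemma \ref{Appu}, that lemma produces a unitary $w \in A$ with $\|u - w^* v w\| < \ep$; setting $h := w^* h_1 w$ gives $\|h\| \le 1$ and $\|u - \exp(i 2\pi h)\| < \ep$, hence ${\rm cel}_{CU}(A) \le 2\pi$. The main obstacle is the simultaneous norm control on $h_1$ and the determinant matching: in Theorem \ref{erank} the coefficients $6/\sigma$ used to match ${\rm Det}(u)$ blew up $\|h_1\|$ to arbitrary size, whereas here the hypothesis $u \in CU(A)$ makes the required determinant correction small enough (of order $\|a\|$) to keep $\|h_1\| \le 1$. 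A secondary technical point is coordinating the perturbation from $\Gamma$ to $\Gamma_1$ and the addition of the correction terms so that $v$ still meets the trace approximation on ${\cal G}$ required by \ref{Appu}; this amounts to a careful budget between the measure perturbation size, $\|a\|$, and $\delta$.
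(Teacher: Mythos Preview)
There is a genuine gap at the determinant-correction step. Your claim that the correction terms $b_1,b_2$ can be taken ``of order $O(\|a\|)$'' presupposes that the principal-branch trace $\tau(h_1)\approx\int_{(-1/2,1/2]} t\,d\nu_\tau$ already lies close to $\overline{\rho_A(K_0(A))}$ once $u\in CU(A)$. This is false: the spectral data of $u$ does \emph{not} determine ${\rm Det}(u)$ modulo $\rho_A(K_0(A))$. Concretely, take $A={\cal Z}$ and (using Lemma~\ref{embedding}) choose $h\in{\cal Z}_{s.a.}$ with ${\rm sp}(h)=[0,3/2]$ and spectral density $(8/9)t\,dt$; then $\tau(h)=1\in\Z=\rho_{\cal Z}(K_0({\cal Z}))$, so $u=\exp(i2\pi h)\in CU({\cal Z})$, yet a direct computation gives $\int_{(-1/2,1/2]}t\,d\nu_\tau=1/9\notin\Z$. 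Your $h_1$ would therefore satisfy $\tau(h_1)\approx 1/9$, and bringing this to $0$ (mod $\Z$) requires a correction of fixed size $\approx 1/9$, not $O(\|a\|)$; any such correction either destroys the trace matching on ${\cal G}$ or pushes $\|h_1\|$ past $1$.

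The paper's proof avoids this obstruction by a genuinely different mechanism: it does \emph{not} build $h$ from the spectral data of $u$. Instead it passes to $A\otimes M_\p$ (where $TR\le 1$) and applies Lemma~\ref{Ttr1} to obtain $h_\p$ with $\|h_\p\|\le 2\pi$, $\tau(h_\p)=0$ for all $\tau$, and $\|u_\p-\exp(ih_\p)\|<\ep_0$. The crucial point is that Lemma~\ref{Ttr1} (via the interval-algebra approximation and Lemma~\ref{Det=1}) simultaneously achieves the norm bound \emph{and} the exact trace-zero condition; this is precisely the step your argument lacks. One then transfers only the spectral distribution of $h_\p$ back to $A$ via Lemma~\ref{embedding}, obtaining $h\in A$ with $\tau(h)=0$ inherited for free, hence $\exp(ih)\in CU(A)$ automatically, and Lemma~\ref{Appu} finishes. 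In short, the detour through $A\otimes M_\p$ is not a technicality but the substance of the argument.
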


\begin{proof}
We may assume that ${\rm sp}(u)=\T.$  Let $\ep>0.$ Let $\phi: C(\T)\to A$ be defined by $\phi(f)=f(u).$ It is a unital monomorphism.
It follows from Proposition 11.1 of \cite{Lnappeqv} that there is a non-decreasing function
$\Delta: (0,1)\to (0,1)$ such that
\beq\label{MT1-1+}
\mu_{\tau}(O_a)\ge \Delta (a)\tforal \tau\in T(A)
\eneq
for all open balls $O_a$ of $\T$ with radius $a\in (0,1).$

Note, by \cite{LS}, for any supernatural number $\p$ of infinite type,
$TR(A\otimes M_\p)\le 1.$
Consider $u\otimes 1.$ Denote by $u_\p$ for
$u\otimes 1$ in $A\otimes M_\p.$
For any $\ep/2>\ep_0>0,$ by \ref{Ttr1}, there is a
self-adjoint  element $h_\p\in A\otimes M_\p$ with
${\rm sp}(h_\p)=[-2\pi, 2\pi]$ such that
\beq\label{MT1-2}
\|u_\p-\exp(ih_\p)\|<\ep_0 \andeqn \tau(h_\p)=0\tforal \tau\in T(A\otimes M_\p).
\eneq
Let $\psi_0: C(\T)\to A\otimes M_\p$ be the \hm\, defined by
$\psi_0(f)=f(\exp(ih_\p))$ for all $f\in C(\T).$
Let $\eta>0,$ $\dt>0$ and let ${\cal G}$ be a finite subset as required by \ref{Appu} for $\ep/2$ (in place of $\ep$) and $\Delta.$
Choose $\ep_0$ sufficiently small, so the following holds:
For any unitary $v\in A\otimes M_\p,$ if
$\|u_\p-v\|<\ep_0,$ then
\beq\label{MT1-4}
|\tau(g(u_\p))-\tau(g(v))|<\dt\tforal \tau\in T(A)
\eneq
and for all $g\in {\cal G}.$
Note each $\tau\in A\otimes M_\p$ may be written as
$s\otimes t,$ where $s\in T(A)$ is any tracial state and
$t\in T(M_\p)$ is the unique tracial state.

Let $\Gamma: C([-2\pi, 2\pi])_{s.a.}\to \aff(T(A)$ be defined
by
\beq\label{MT1-5}
\Gamma(f)(\tau)=(\tau\otimes t)(f(h_\p))\tforal f\in C([-2\pi, 2\pi])_{s.a.}
\eneq
and for all $\tau\in T(A),$
where
$t$ is the unique tracial state on $M_\p.$

It follows from \ref{embedding}  that there exists a self-adjoint
element $h\in A$  with ${\rm sp}(h)=[-2\pi, 2\pi]$ such that
\beq\label{MT1-6}
\tau(f(h))=\Gamma(f)(\tau)=(\tau\otimes t)(f(h_\p))\tforal f\in C([-2\pi, 2\pi])
\eneq
and for all $\tau\in T(A).$
In particular,
\beq\label{MT1-7}
\tau(h)=0\tforal \tau\in T(A).
\eneq
Define $v_1=\exp(i h)\in A.$
Note that, by (\ref{MT1-5}),
\beq\label{MT1-8}
\tau(g(v_1))=(\tau\otimes t)g(\exp(ih_\p))\tforal \tau\in T(A)
\eneq
and for all $f\in C(\T).$ By the choice of $\ep_0,$ as
in (\ref{MT1-4}),
\beq\label{MT1-10}
|\tau(g(u))-\tau(g(v_1))|
=|(\tau\otimes t)(g(u_\p))-(\tau\otimes t)(g(\exp(ih_\p)))|<\dt
\eneq
for all $\tau\in T(A)$ and for all $g\in {\cal G}.$
We also have  $[v_1]=[u]=0$ in $K_1(A).$ Furthermore, by (\ref{MT1-7}),
$v_1\in CU(A\otimes {\cal Z}).$ Thus, by applying (\ref{Appu}),
there exists a unitary $w\in A$ such that
\beq\label{MT1-11}
\|u-w^*\exp(i h) w\|<\ep/2
\eneq

\end{proof}

\begin{thm}\label{NNZlT}
Let $A$ be a unital separable simple ${\cal Z}$-stable \CA\, in ${\cal A}_0$ with a unique  tracial state.
Then, for any unitary $u\in U_0(A),$ there exists
 a real number $-\pi<a<\pi$ such that, for any $\ep>0,$ there exists a self-adjoint element $h\in A$  with  $\|h\|\le 2\pi$ and
$$
\|u-\exp(i(h+a))\|<\ep.
$$
 Consequently
$$
{\rm cel}(A)\le 3\pi.
$$
\end{thm}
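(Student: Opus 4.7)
The strategy is to reduce the claim to Theorem~\ref{MT1} by extracting a single scalar phase $e^{ia}$ with $a\in(-\pi,\pi)$ so that the residual unitary $e^{-ia}u$ lies in $CU(A)$, and then invoking the $\|h\|\le 2\pi$ bound furnished by that theorem.

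Because $A$ has a unique tracial state $\tau$, $\aff(T(A))$ identifies with $\R$, and since $A$ is ${\cal Z}$-stable (hence of stable rank one) and belongs to ${\cal A}_1$, the de la Harpe--Skandalis determinant induces an isomorphism $U_0(A)/CU(A)\cong\R/\overline{\rho_A(K_0(A))}$. Because $\rho_A([1_A])=1$, the closure $G:=\overline{\rho_A(K_0(A))}$ contains $\Z$, so it is either dense in $\R$ or of the form $(1/n)\Z$ for some positive integer $n$. In all cases, every class in $\R/G$ admits a representative $a/(2\pi)$ with $a\in(-\pi,\pi)$: take $a=0$ in the dense case, and $a\in(-\pi/n,\pi/n]\subset(-\pi,\pi)$ in the discrete case (with the boundary situation $G=\Z$ and determinant class $1/2$ treated separately). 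By construction $e^{-ia}u\in CU(A)$, so Theorem~\ref{MT1} produces, for each $\ep>0$, a self-adjoint $h\in A$ with $\|h\|\le 2\pi$ satisfying $\|e^{-ia}u-\exp(ih)\|<\ep$. The scalar $e^{ia}$ commutes with $\exp(ih)$, so $e^{ia}\exp(ih)=\exp(i(h+a))$, whence
$$\|u-\exp(i(h+a))\|<\ep.$$
The bound $\|h+a\|\le\|h\|+|a|<3\pi$ then gives ${\rm cel}(A)\le 3\pi$.

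The main obstacle is the boundary case $G=\Z$ with determinant representative $1/2$, which occurs already for $A={\cal Z}$: the naive representative is $a=\pi$, sitting on the edge of $(-\pi,\pi)$. The resolution exploits that Lemma~\ref{Ttr1} applied to $v=-u\in CU(A)$ produces an $h_0$ with $\tau(h_0)=0$; this trace-zero constraint provides additional room beyond the mere norm bound, permitting one to distribute the scalar~$\pi$ between a fixed $a\in(-\pi,\pi)$ and a shift of~$h_0$ whose norm still respects $\|h\|\le 2\pi$. The delicate point is that this choice of $a$ must be made before $\ep$ is specified, so one fixes it once and for all for the given determinant class and relies on the uniform slack built into the construction of~$h_0$ from Lemma~\ref{Ttr1}.
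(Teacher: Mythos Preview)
Your overall approach coincides with the paper's: choose a scalar $a$ so that $e^{-ia}u\in CU(A)$, invoke Theorem~\ref{MT1} to obtain $h$ with $\|h\|\le 2\pi$ approximating $e^{-ia}u$, and conclude $\|h+a\|<3\pi$. The paper does exactly this, in one short paragraph, without your case analysis on the structure of $\overline{\rho_A(K_0(A))}$.

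You have, however, spotted a genuine subtlety that the paper's own proof also does not close. When $\overline{\rho_A(K_0(A))}=\Z$ (as for $A={\cal Z}$) and the determinant class of $u$ is $\tfrac12+\Z$, every admissible $a$ is congruent to $\pi$ modulo $2\pi$, so none lies in the \emph{open} interval $(-\pi,\pi)$. The paper simply asserts one may take $t\in(-1,1)$ and moves on. (Note that the abstract states the corresponding result for ${\cal Z}$ with the half-open interval $-\pi<t\le\pi$; that is almost certainly the intended formulation, and with it the difficulty disappears.)

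Your proposed repair of this boundary case, though, does not work. You claim that the trace condition $\tau(h_0)=0$ from Lemma~\ref{Ttr1} leaves ``additional room'' so that one may fix some $\delta>0$ in advance, set $a=\pi-\delta$, and still keep $\|h_0+\delta\|\le 2\pi$. But Lemma~\ref{Ttr1} explicitly arranges ${\rm sp}(h)=[-1,1]$ (equivalently $\|2\pi h\|=2\pi$ exactly) whenever ${\rm sp}(u)=\T$; the trace-zero condition is compatible with the spectrum filling the whole interval and provides no uniform slack. Since $a$ must be chosen before $\ep$, your redistribution argument cannot be carried out. The honest fix is to allow $a\in(-\pi,\pi]$; since one still has $|a|\le\pi$, the consequence ${\rm cel}(A)\le 3\pi$ goes through unchanged.
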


\begin{proof}
Let $u\in U_0(A)$ and let $\ep>0.$
Since $A$ has a unique tracial state $\tau,$
$U_0(A)/CU(A)=\R/\overline{\rho_A(K_0(A))}$ and $\Z\in \rho_A(K_0(A)).$ 
Therefore there is $t\in (-1,1)$ such that
\beq\label{NNZ-n-1}
{\rm Det}(u)=t+\overline{\rho_A(K_0(A))}.
\eneq
Consequently
\beq\label{NNZ-n-2}
e^{-\pi t}u\in CU(A).
\eneq
It follows from \ref{MT1} that there is a self-adjoint element $h\in A$ with $\|h\|\le 2\pi$ such that
\beq\label{NNZ-n-3}
\|e^{-\pi t}u-\exp(ih)\|<\ep
\eneq
Therefore
\beq\label{NNZ-n+4}
\|u-e^{i\pi t} \exp(ih)\|<\ep.
\eneq
Let $a=\pi t.$
Note that $e^{i \pi t}\exp(ih)=\exp(i(h+a)).$  Put $h_1=h+a.$
We conclude that
$$
\|u-\exp(ih_1)\|<\ep.
$$
Note that $\|h_1\|\le \|h\|+|a|<3\pi.$
There is $h_2\in A_{s.a.}$ with $\|h_2\|<2\arcsin (\pi/2)$ such
that
\beq\label{NNZ-n+5}
u=\exp(ih_1)\exp(ih_2).
\eneq
If we choose $\ep$  so that
$$
2\arcsin(\ep/2)<3\pi-\|h\|+|a|,
$$
Then
$$
\|h_1\|+\|h_2\|<3\pi.
$$


\end{proof}

\begin{cor}\label{JSl}
Let $u\in U_0({\cal Z})$ be a unitary. There exists $t\in (-\pi,\pi )$ such that, for any  $\ep>0,$  there exists
a self-adjoint element $h\in {\cal Z}$ with $\|h\|\le 2\pi$ satisfying
\beq\label{JS-1}
\|e^{it}u-\exp(ih))\|<\ep.
\eneq
\end{cor}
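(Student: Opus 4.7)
The plan is to derive this corollary as a direct specialization of Theorem \ref{NNZlT}. The main task is to verify that the Jiang--Su algebra $\mathcal Z$ satisfies all the hypotheses of that theorem, after which the corollary is essentially a reformulation. The Jiang--Su algebra is unital, separable and simple by construction (\cite{JS}); it is $\mathcal Z$-stable because $\mathcal Z\otimes \mathcal Z\cong \mathcal Z$; it lies in the class $\mathcal A_0$ (this is explicitly recorded in Definition \ref{trrank}, following from $\mathcal Z\otimes U$ being an AF-algebra for any infinite-dimensional UHF-algebra $U$); and finally it has a unique tracial state. Thus all the structural hypotheses of \ref{NNZlT} are met.

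Given any $u\in U_0(\mathcal Z)$ and $\ep>0$, Theorem \ref{NNZlT} produces a real number $a\in(-\pi,\pi)$ (depending only on $u$, as the proof shows it is determined by $\mathrm{Det}(u)$ modulo $\overline{\rho_{\mathcal Z}(K_0(\mathcal Z))}$) and a self-adjoint element $h\in \mathcal Z$ with $\|h\|\le 2\pi$ such that
\[
\|u-\exp\bigl(i(h+a)\bigr)\|<\ep.
\]
Since $a$ is a scalar multiple of the identity, it commutes with $h$, so $\exp(i(h+a))=e^{ia}\exp(ih)$. Multiplying by $e^{-ia}$ on the left gives
\[
\|e^{-ia}u-\exp(ih)\|<\ep.
\]
Setting $t:=-a\in(-\pi,\pi)$ yields the conclusion of the corollary.

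There is no real obstacle here; the only subtlety to double-check is that the number $t$ can be chosen independently of $\ep$. This is the case because in the proof of \ref{NNZlT}, the real number $a$ is produced from the de la Harpe--Skandalis determinant of $u$ (using that $U_0(\mathcal Z)/CU(\mathcal Z)\cong \R/\overline{\rho_{\mathcal Z}(K_0(\mathcal Z))}$ for the unique-trace case) before any $\ep$-approximation is performed; it depends only on $u$, and then for each $\ep>0$ one applies Theorem \ref{MT1} to the unitary $e^{-i\pi t}u\in CU(\mathcal Z)$ to obtain $h$ with $\|h\|\le 2\pi$. Hence the same $t$ works uniformly in $\ep$.
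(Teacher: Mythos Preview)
Your proposal is correct and matches the paper's intent: the corollary is stated without proof in the paper precisely because it is an immediate specialization of Theorem~\ref{NNZlT} to $A=\mathcal{Z}$, and you have supplied exactly that verification together with the substitution $t=-a$.
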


\section{Examples}

\begin{NN}\label{uniexam}
{\rm Let $u\in C([0,1], M_n)$ be  defined as follows:
\beq\label{C01l-1}
u(t)=e^{\pi i t(2-1/(n-1))}e_1+e^{-\pi i{t(2-1/(n-1))\over{(n-1)}}}(\sum_{k=2}^ne_k)\andeqn\\
h(t)=t(2-1/(n-1))e_1-{t(2-1/(n-1))\over{(n-1)}}(\sum_{k=2}^ne_k)
\tforal t\in [0,1],
\eneq
where $\{e_1, e_2,...,e_n\}$ is a  set of mutually orthogonal rank one projections.

Then
$$
u(t)=\exp(i\pi h) \andeqn \tau(h)=0\tforal \tau\in T(C([0,1],M_n)).
$$
Therefore ${\rm det}(u(t))=1$ for all $t\in [0,1]$ and $u\in CU(C[0,1], M_n).$ Note also $\|h\|=\pi(2-1/(n-1)).$
In what follows we  will show that ${\rm cel}(u)\ge (2-1/(n-1))\pi.$
It should be noted that it is much easier to show that if $u(t)=\exp(iH)$ for some
self-adjoint element in $C([0,1], M_n)$ then $\|H\|\ge (2-1/(n-1))\pi.$

Suppose that ${\rm cel}(u)=r_1.$ Fix $r_1>\ep>0$  and put $r=r_1+\ep/16.$
Then there are self-adjoint  elements $h_1, h_2,...,h_k\in C([0,1], M_n)$ such that
\beq\label{C01l-3}
u=\prod_{j=1}^k \exp(i h_j)\andeqn \sum_{j=1}^k \|h_j\|=r.
\eneq
Define
$u_s=\prod_{j=1}^k \exp(i h_j(1-s)).$ Then $u_s$ is continuous and piecewise smooth on $[0,1].$
Moreover ${\rm length}\{u(t)\}\le r.$
Since $h_j(t)(1-s)$ is continuous on $[0,1]\times [0,1],$
one shows that
 $W(t,s)=u_s(t)$ is continuous on $[0,1]\times [0,1].$

Furthermore
\beq\label{C01l-4}
\|u_{s_1}-u_{s_2}\|\le r|s_1-s_2|\tforal s_1, s_2\in [0,1].
\eneq


}

\end{NN}

\begin{lem}\label{specdist}
Let $u$ and $v$ be two unitaries in a unital \CA\, $A.$ Suppose that there is a continuous path
of unitaries $\{w(t): t\in [0,1]\}\subset A$ with $w(0)=u$ and $w(1)=v.$
Then,  if $\lambda\in {\rm sp}(u),$ there is a continuous path $\{\lambda(t)\in \T: t\in [0,1]\}$
such that
$\lambda(0)=\lambda,$ $\lambda(t)\in {\rm sp}(w(t))$ for all $t\in [0,1].$

If furthermore, ${\rm length}\{w(t): t\in [0,1]\}=r\le \pi/2,$
then one can require that
$$
{\rm length}\{\lambda(t): t\in [0,1]\}\le r.
$$
\end{lem}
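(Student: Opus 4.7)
The key fact underlying the argument is that for any two normal elements $a, b$ of a unital \CA, the spectra satisfy $\sup_{\mu\in {\rm sp}(a)}{\rm dist}(\mu,{\rm sp}(b)) \le \|a-b\|$ (with a symmetric bound), i.e., the Hausdorff distance of the spectra is at most $\|a-b\|$. Applied along the path $\{w(t)\}$, this makes $t\mapsto {\rm sp}(w(t))$ a continuous multifunction from $[0,1]$ into closed subsets of $\T$ (in Hausdorff distance), and makes the ``spectrum graph''
$$
S = \{(t,\mu)\in [0,1]\times \T : \mu \in {\rm sp}(w(t))\}
$$
a compact subset of $[0,1]\times \T$.

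For the existence of a continuous tracking $\lambda(t)$, my plan is a supremum/extension argument. Let $E \subset [0,1]$ be the set of $b$ for which some continuous $\lambda : [0,b] \to \T$ with $\lambda(0)=\lambda$ and $\lambda(t) \in {\rm sp}(w(t))$ exists. Then $0 \in E$, and $E$ is closed, since uniform limits of continuous lifts have graphs in the closed set $S$ and hence are themselves continuous lifts. For the crucial open direction at $b_0 = \sup E$ with $b_0 < 1$ and $\lambda_0 = \lambda(b_0)$, I would choose a small open arc $A \subset \T$ around $\lambda_0$; by lower semicontinuity of the spectrum at $b_0$, the set ${\rm sp}(w(t)) \cap \bar A$ is nonempty for $t$ close to $b_0$, and by the one-dimensional structure of $A$ one can select from it continuously (for instance, tracking the connected component of $\lambda_0$ inside the Hausdorff-continuously varying family ${\rm sp}(w(t)) \cap \bar A$). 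This extends $\lambda$ continuously past $b_0$, contradicting maximality and forcing $E = [0,1]$.

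For the length estimate under $r \le \pi/2$, I would approximate $\{w(t)\}$ by a piecewise smooth path of length arbitrarily close to $r$ and construct $\lambda$ by nearest-point tracking: given the selection up to time $s$, for $t > s$ close to $s$ let $\lambda(t)$ be the point of ${\rm sp}(w(t))$ closest to $\lambda(s)$. The chord inequality $|\lambda(t) - \lambda(s)| \le \|w(t) - w(s)\|$, valid because some point of ${\rm sp}(w(t))$ is within Hausdorff distance $\le \|w(t)-w(s)\|$ of $\lambda(s)\in {\rm sp}(w(s))$, then yields, by passing to supremum over partitions,
$$
{\rm length}\{\lambda(t)\} \le {\rm length}\{w(t)\} = r.
$$
The hypothesis $r \le \pi/2$ is precisely what keeps the nearest-point projection single-valued throughout: the accumulated tracking stays within arc distance $\pi/2$ of the original $\lambda$, strictly below the antipodal threshold $\pi$ at which nearest-point selection on $\T$ can fail to be unique.

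The main delicate point is the open step of the extension argument, where one must realize a local continuous selection inside the Hausdorff-continuously varying spectrum; this is exactly where the one-dimensionality of $\T$ is essential, since a Hausdorff-continuous family of compacts in a $1$-manifold admits local continuous selections through any chosen base-point. Aside from this, everything reduces to standard perturbation theory of spectra in \CA s.
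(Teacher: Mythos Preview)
Your approach is correct in outline but differs from the paper's, and one step deserves more care.

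The paper does not run an open--closed extension argument. Instead it fixes a nested sequence of partitions $\{\mathcal P_n\}$ of $[0,1]$ and, at each node $t_i^{(n)}$, chooses $\lambda(n,i)\in{\rm sp}(w(t_i^{(n)}))$ so that $|\lambda(n,i)-\lambda(n,i+1)|\le\|w(t_i^{(n)})-w(t_{i+1}^{(n)})\|$ (this is exactly your Hausdorff--distance inequality, applied discretely). Writing $\lambda(n,j)=e^{i\theta(n,j)}$, it then defines $\theta(t)=\sup\{\theta(n,j):t_j^{(n)}\le t\}$ and checks, using uniform continuity of $w$, that $\lambda(t)=e^{i\theta(t)}$ is continuous, lies in ${\rm sp}(w(t))$, and has length $\le r$ when the path is rectifiable. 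So existence and the length bound come out of a single constructive formula; no local selection lemma on $\T$ is invoked.

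The point in your argument that is not yet justified is the closedness of $E$. Knowing that continuous selections $\lambda_n:[0,b_n]\to\T$ exist for $b_n\nearrow b_0$ does not by itself produce a continuous selection on $[0,b_0]$: you have no equicontinuity in the general (non--rectifiable) case, so ``uniform limits'' are not available, and a Hausdorff limit of the graphs $\Gamma_n\subset S$ can acquire vertical segments and fail to be a graph. You can repair this---for instance by running your nearest--point tracking locally, since uniform continuity of $w$ lets you subdivide $[0,1]$ into finitely many pieces on each of which $\|w(t)-w(s)\|$ is small enough that the nearest spectral point is unique and varies continuously---but as written the step is a gap. The paper's partition--and--supremum construction is precisely a way to bypass this difficulty without appealing to any abstract selection principle.
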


\begin{proof}
The proof of this was originally taken from an argument of Phillips.
As in Lemma 4.2.3 of \cite{Lnbk}, one obtains a sequence of
 partitions $\{{\cal P}_n\}$ of $[0,1]$ such that ${\cal P}_n\subset {\cal P}_{n+1},$ $n=1,2,...,$
 for each partition ${\cal P}_n=\{0=t_0^{(n)}<t_1^{(n)}<\cdots t_{k(n)}^{(n)}=1\},$
 there are $\lambda(n,i)\in {\rm sp}(w(t_i^{(n)}))$ such that
 \beq\label{specdist-1}
 |\lambda(n,i)-\lambda(n, i+1)|= \|w(t_i^{(n)})-w(t_{i+1}^{(n)})\|\andeqn\\
\sum_{i=1}^{k(n)}|\lambda(n,i)-\lambda(n, i+1)|\le |\sum_{i=1}^{k(n)}\|w(t_i^{(n)})-w(t_{i+1}^{(n)})\|\le r,
 \eneq
 if $\{w(t)\}$ is rectifiable with
 ${\rm length}\{w(t): t\in [0,1]\}=r.$
 Write $\lambda(n,j)=e^{i \theta(n,j)}$ with $\theta(n,j)\in [0, 2\pi),$ $j=1,2,...,k(n)$ and $n=1,2,....$
Define
\beq\label{specdist-2}
\theta(t)=\sup\{ \theta(n,j): t_j^{(n)}\le t\}.
\eneq
By the uniform continuity of $w(t),$ one checks that $\lambda(t)=\exp(i\theta(t))$ is continuous on $[0,1],$ $\lambda(t)\in {\rm sp}(w(t))$ and
${\rm length}(\{\lambda(t)\}\le r.$
\end{proof}

\begin{NN}\label{Genuni}
{\rm
Suppose that
$u(t)\in C([c,d], M_n)$ is  a unitary which has the form:
$$
u(t)=f(t)q_1+z(t)\tforal t\in [c, d],
$$
where $f(t)\in C([0,1], \T),$ $q_1$ is a rank one projection
and $z\in (1-q_1)C([c,d], M_n)(1-q_1)$ is a unitary.
Let ${\rm cel}(u)=r_1$ and  fix $r_1/2>\ep>0.$ Let $r=r_1+\ep/16.$
let $\{W(t,s): s\in [0,1]\}$ be a continuous rectifiable path  defined
in \ref{uniexam} such that $W(t,s)\in C([c,d]\times [0,1], M_n)$ with $W(t,0)=u(t)$ and $W(t,1)=1$ with length $r.$

Fix $s_0\in (0,1].$ Suppose that ${\rm length}\{W(t,s): s\in [0,s_0]\}=r_0.$ Define $S_1$ the subset of $\T$ such that every point
of $S_1$ can be connected to a point in ${\rm sp}(z)$ by a continuous path of length at most $r_0.$

Then we have the following:
}
\end{NN}

\begin{lem}\label{seper}
Let $\{f(t): t\in [c, d]\}=\{e^{it} : t\in [t_0, t_1]\}$ with $f(c)=e^{it_0}$ and $f(d)=e^{it_1}$ such that
$t_1-t_0=r_2.$
Suppose that
$$
{\rm dist}(\lambda_t(s), S_1)>0\tforal t\in [t_0, t_1]\andeqn s\in [0, s_0],
$$
where ${\rm length}\{W(t,s): s\in [0, s_0]\}=r_0<r_2/2.$
Then
$$
W(t, s_0)=g_{s_0}(t)q_1+v_1(t)\tforal t\in [c_1, d_1]
$$
for some $[c_1, d_1]\subset [c, d]$ with $d_1>c_1.$
Moreover $g_{s_0}(t)$ is a continuous function  and
$$
\{g_{s_0}(t): t\in [c_1, d_1]\}\supset \{e^{it}: t\in [t_0+r_0, t_1-r_0]\}.
$$
\end{lem}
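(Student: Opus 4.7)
The strategy is to use the gap condition to identify, for each $s \in [0, s_0]$, a continuous rank-one spectral projection of $W(\cdot, s)$ that carries the tracked eigenvalue $\lambda_\theta(s)$, so that at $s = s_0$ the resulting eigenvalue function $g_{s_0}$ is continuous and, via the intermediate value theorem, covers the required arc.

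First, since $f : [c, d] \to \{e^{i\theta} : \theta \in [t_0, t_1]\}$ is continuous and surjective, I would pass to a closed sub-interval $[c_1, d_1] \subseteq [c, d]$ on which $f$ restricts to a homeomorphism onto the full arc, and let $\tau : [t_0, t_1] \to [c_1, d_1]$ be its inverse. For each $\theta$, applying Lemma \ref{specdist} to the path $s \mapsto W(\tau(\theta), s)$ of length at most $r_0$ tracks $e^{i\theta}$ to the given $\lambda_\theta(s)$ within arc-length $r_0$ and also tracks each of the $n - 1$ eigenvalues of $z(\tau(\theta))$ to curves of arc-length at most $r_0$. These latter curves begin in $\operatorname{sp}(z)$ and stay inside its $r_0$-neighborhood, so they remain inside $S_1$ for all $s \in [0, s_0]$.

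Consequently, at every $(\theta, s) \in [t_0, t_1] \times [0, s_0]$ the spectrum of $W(\tau(\theta), s)$ is the union of $n - 1$ tracked eigenvalues lying in $S_1$ together with the point $\lambda_\theta(s) \notin S_1$, so $\lambda_\theta(s)$ is a simple eigenvalue with positive distance from the remainder of the spectrum. A standard contour-integral formula over a small loop around $\lambda_{\theta_*}(s_*)$ disjoint from $S_1$ gives a rank-one projection $p(\theta, s)$ jointly continuous in $(\theta, s)$ near $(\theta_*, s_*)$, and hence $\lambda_\theta(s)$ is jointly continuous. Setting $g_{s_0}(\tau(\theta)) := \lambda_\theta(s_0)$ and $v_1(\tau(\theta)) := (1 - p(\theta, s_0)) W(\tau(\theta), s_0)(1 - p(\theta, s_0))$, and using a continuous unitary in $M_n$ on $[c_1, d_1]$ to identify the rank-one projection $p(\cdot, s_0)$ with the fixed $q_1$ from the decomposition of $u$, yields $W(t, s_0) = g_{s_0}(t) q_1 + v_1(t)$ on $[c_1, d_1]$. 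Finally, lifting $\lambda_\theta(s_0) = e^{i\phi(\theta)}$ to a continuous $\phi : [t_0, t_1] \to \mathbb{R}$ with $|\phi(\theta) - \theta| \le r_0$ (from the Lemma \ref{specdist} length bound), the inequalities $\phi(t_0) \le t_0 + r_0 < t_1 - r_0 \le \phi(t_1)$ together with the intermediate value theorem give $\phi([t_0, t_1]) \supseteq [t_0 + r_0, t_1 - r_0]$, which is the asserted containment of arcs.

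The main obstacle is promoting the pointwise gap $\operatorname{dist}(\lambda_\theta(s), S_1) > 0$ to the local uniform gap needed by the contour-integral construction of $p(\theta, s)$. I would handle this by noting that near any fixed point $(\theta_*, s_*)$, continuity of the spectrum of $W$ keeps the eigenvalues originally in $S_1$ close to $S_1$ and keeps $\lambda_\theta(s)$ close to $\lambda_{\theta_*}(s_*)$, so a single loop $\Gamma$ separates them uniformly on a small neighborhood of $(\theta_*, s_*)$; compactness of $[t_0, t_1] \times [0, s_0]$ then converts local continuity of $p$ into a global uniform gap.
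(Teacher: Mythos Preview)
Your approach is essentially the paper's: use the gap hypothesis to isolate a continuous rank-one spectral projection carrying the tracked eigenvalue, then recover the arc via the intermediate value theorem applied to the resulting continuous eigenvalue function. The paper packages this more cleanly by regarding $Z = W|_{[c,d]\times[0,s_0]}$ as a single unitary in the \CA\ $C([c,d]\times[0,s_0], M_n)$; the hypothesis then says ${\rm sp}(Z)$ is contained in the disjoint union of the compact sets $J=\{\lambda_t(s)\}$ and $S_1$, so one application of continuous functional calculus produces a globally continuous projection $q_1'$, which is rank one everywhere because it is rank one on the slice $\{s=0\}$. This global viewpoint dissolves the local contour-integral plus compactness patching you flag as the main obstacle. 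One small correction: conjugating by a unitary to identify your $p(\cdot,s_0)$ with the original $q_1$ would alter $W(t,s_0)$ itself; the paper simply records the decomposition with a new rank-one projection $q_1'$, and the $q_1$ in the lemma statement should be read that way.
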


\begin{proof}
View $Z=W(t, s)|_{[c, d]\times [0, s_0]}$ as a unitary in $C([c, d]\times [0, s_0], M_n).$
Then the  assumption implies that
$$
{\rm sp}(Z)\subset  J\sqcup S_1,
$$
where $J=\{\lambda_t(s): t\in [t_0, t_1]\andeqn s\in [0, s_0]\}, $  Note that $J\cap S_1=\emptyset.$
Then, there is a non-zero projection $q_1'\in C([c, d]\times [0, s_0], M_n)$ such that
\beq\label{sep-1}
Z=z_1+z_2,
\eneq
where $z_1\in q_1'C([c, d]\times [0, s_0], M_n)q_1'$ and $z_2\in (1-q_1')C([c, d]\times [0, s_0], M_n)(1-q_1')$ are
unitaries such that ${\rm sp}(z_1)\subset J$ and ${\rm sp}(z_2)\subset S_1.$
Since $q_1'$ has rank one in $[c, d]\times \{0\},$ we conclude that $q_1'$ has rank one everywhere.
Thus
$$
Z(t,s)=g_s(t)q_1'(t,s)+z_2(s,t)\tforal (t,s)\in [c,d]\times [0, s_0].
$$
Note that $g_s(t)\in C([c,d]\times [0,s_0]).$
Therefore
$\{g_{s_0}(t): t\in [c,d]\}$ is an arc containing $g(c)=\lambda_c(s_0)$ and $g_{s_0}(d).$
By the assumption and \ref{specdist}, $g_{s_0}(c)\in \{e^{it}: t\in [t_0-r_0, t_0+r_0]\}$ and
$g_{s_0}(d)\in \{e^{it}t\in [t_1-r_0, t_1+r_0]\}.$
The lemma follows.
\end{proof}

\begin{lem}\label{connt}
Suppose that ${\rm length}(\{W(t, s): s\in [0, s_1]\}=C_1<\pi/4.$
If $[c, d]\subset [a, b]$ such that
$$
{\rm dist}(\{f(t): t\in [c, d]\},\{{\rm sp}(v_1(t)): t\in [c, d]\})=r_1=4\sin(C_1/2)+\dt
$$
for some $0<\dt<\pi/8,$ $\{f(t): t\in [c, d]\}=\{e^{it}: t\in [t_0, t_1]\}$ with $t_1-t_0>2r_1,$
then, for any $\dt>0,$  there exists an interval $[c_1, d_1]\subset  [c, d]$ with  $c_1<d_1,$ a  rank one projection $q_1\in C([c_1, d_1], M_n)$
such that
\beq\label{connt-1}
W(t, s_1)=g_{s_1}(t)q_1+v_1'(t),
\eneq
where $g_{s_1}(t)\in C([c_1, d_1])$ with
\beq\label{connt-2}
\{g_1(t): t\in [c_1, d_1]\}=\{e^{it}: t\in [t_0+C_1+\dt, t_1-C_1-\dt]\},
\eneq
where $2\pi>t_1'>t_0'\ge 0,$  and where $v_1'\in (1-q_1)C([c_1, d_1], M_n)(1-q_1)$ is a unitary
with ${\rm sp}(v_1(t))\subset S_1,$ where $S_1$ is a subset of $\T$ such that every point of $S_1$ can be connected
by a point in ${\rm sp}(v(t))$ ($t\in [c, d]$) by a continuous path with length at most $C_1.$
\end{lem}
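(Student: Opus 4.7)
The plan is to reduce Lemma \ref{connt} to Lemma \ref{seper} applied to the strip $[c,d]\times[0,s_1]$, with the splitting of the spectrum provided by Lemma \ref{specdist}. The key geometric observation is that $r_1=4\sin(C_1/2)+\delta$ is exactly twice the maximal chord displacement of an individual eigenvalue under a unitary path of length $C_1<\pi/2$, plus a positive margin $\delta$; combined with the triangle inequality this keeps the two spectral strands separated by chord distance at least $\delta$ throughout the homotopy.

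For each fixed $t\in [c,d]$ the path $s\mapsto W(t,s)$ has length at most $C_1<\pi/2$, so Lemma \ref{specdist} yields a continuous selection $\lambda_t(s)\in{\rm sp}(W(t,s))$ with $\lambda_t(0)=f(t)$ and arc length at most $C_1$. Set $J=\{\lambda_t(s):t\in[c,d],\,s\in[0,s_1]\}\subset\T$, and let $S_1\subset\T$ be the set of points reachable from $\bigcup_{t\in[c,d]}{\rm sp}(v_1(t))$ by a continuous path of length at most $C_1$. Every spectral point of $W|_{[c,d]\times[0,s_1]}$ lies in $J\cup S_1$; each $\lambda\in J$ is within chord $2\sin(C_1/2)$ of some $f(t)$, and each point of $S_1$ is within chord $2\sin(C_1/2)$ of ${\rm sp}(v_1(t))$, so by the separation hypothesis the chord distance between $J$ and $S_1$ is at least $r_1-4\sin(C_1/2)=\delta>0$. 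Thus Lemma \ref{seper} applies and produces a rank one projection (which, since the set of rank one projections on an interval is contractible, may be conjugated to a constant $q_1$) together with a decomposition $W(t,s_1)=g_{s_1}(t)q_1+v_1'(t)$ on $[c,d]$, with $g_{s_1}$ continuous, $g_{s_1}(t)\in J$, and ${\rm sp}(v_1'(t))\subset S_1$.

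It remains to pin down $[c_1,d_1]\subset[c,d]$. Since $g_{s_1}(c)$ and $g_{s_1}(d)$ lie within arc $C_1$ of $e^{it_0}$ and $e^{it_1}$ respectively, the intermediate value theorem along the arc $J$ lets us pick $c_1\in [c,d]$ to be the last $t$ with $g_{s_1}(t)=e^{i(t_0+C_1+\delta)}$ preceding a first subsequent time $d_1$ with $g_{s_1}(d_1)=e^{i(t_1-C_1-\delta)}$; the image $g_{s_1}([c_1,d_1])$ then covers the prescribed subarc $\{e^{it}:t\in[t_0+C_1+\delta,t_1-C_1-\delta]\}$, yielding the claimed decomposition \eqref{connt-1}--\eqref{connt-2}.

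The main obstacle I expect is the exact equality of images in the conclusion: $g_{s_1}$ need not traverse $J$ monotonically, so showing that $g_{s_1}([c_1,d_1])$ agrees with (rather than merely contains) the prescribed subarc requires choosing $[c_1,d_1]$ as a maximal subinterval on which $g_{s_1}$ sweeps this subarc monotonically, which is a delicate topological selection. A secondary technical point is upgrading Lemma \ref{seper} from the one-parameter setting to the two-parameter strip $[c,d]\times[0,s_1]$; this rests on the joint $(t,s)$-continuity of the spectral selection extracted from Lemma \ref{specdist}, which must be verified by the usual refinement-of-partitions argument used in its proof.
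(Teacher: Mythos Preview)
Your proposal is correct and takes the same route as the paper's proof, which simply asserts the separation ${\rm dist}(\lambda_t(s),S_1)>0$ from the length bound $C_1<\pi/4$ and then invokes Lemma \ref{seper} ``by taking smaller interval.'' Your two flagged obstacles are not genuine: Lemma \ref{seper} is already formulated and proved over the two-parameter strip $[c,d]\times[0,s_0]$, and the exact image equality is obtained by the first-hitting/last-hitting selection you describe (take $d_1$ to be the first $t$ with $g_{s_1}(t)=e^{i(t_1-C_1-\delta)}$ and $c_1$ the last preceding $t$ with $g_{s_1}(t)=e^{i(t_0+C_1+\delta)}$; on $[c_1,d_1]$ the lift of $g_{s_1}$ then stays in $[t_0+C_1+\delta,\,t_1-C_1-\delta]$ and hits both endpoints).
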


\begin{proof}

Let $S_1$ be the subset of $\T$ such that every point in $S_1$ can be connected to a point
in $sp(v_1)$ with length at most $C_1.$
Since ${\rm length}(\{W(t, s): s\in [0, s_1]\}=C_1<\pi/4,$
\beq\label{connt-3}
{\rm dist}(\lambda_t(s), S_1)>0\tforal t\in [t_0, t_1]\andeqn s\in [0, s_1].
\eneq
Thus this lemma follows from \ref{seper} by taking smaller interval.
\end{proof}

\begin{thm}\label{C01L1}
Let $u(t)\in C([0,1], M_n)$ be the unitary defined in \ref{uniexam}.
For any $\ep>0,$
\beq\label{nc01T-1}
{\rm length}\{u(t)\}\ge \pi(2-{1\over{n-1}})-\ep.
\eneq
If $h\in C([0,1], M_n)_{s.a.}$ such that
\beq\label{nc01T-2}
\|u-\exp(ih)\|<\ep,
\eneq
then
$\|h\|\ge \pi(2-{1\over{n-1}})-2\arcsin(\ep/2).$
Moreover,
$$
{\rm cel}_{CU}(C([0,1], M_n))\ge \pi(2-{1\over{n-1}}).
$$
\end{thm}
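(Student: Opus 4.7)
The core claim is ${\rm cel}(u)\ge\pi(2-1/(n-1))$; the remaining two assertions are routine consequences. Indeed, if $\|u-\exp(ih)\|<\epsilon$ then $u\exp(-ih)=\exp(ih')$ with $\|h'\|\le 2\arcsin(\epsilon/2)$, whence $u=\exp(ih)\exp(ih')$ gives $\|h\|+\|h'\|\ge{\rm cel}(u)\ge\pi(2-1/(n-1))-\epsilon$, forcing the claimed bound on $\|h\|$. Also $u\in CU(C([0,1],M_n))$ since $u=\exp(i\pi h)$ with $\tau(h)=0$ for every $\tau\in T(C([0,1],M_n))$, yielding ${\rm Det}(u)=0$; hence ${\rm cel}_{CU}(C([0,1],M_n))\ge{\rm cel}(u)\ge\pi(2-1/(n-1))$.

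For the core estimate, argue by contradiction: assume ${\rm cel}(u)<\pi(2-1/(n-1))-\epsilon$ and, following \ref{uniexam}, pick $r$ with ${\rm cel}(u)<r<\pi(2-1/(n-1))$ together with a factorization $u=\prod_{j=1}^k\exp(ih_j)$ satisfying $\sum_j\|h_j\|=r$. Set $W(t,s)=\prod_j\exp(ih_j(t)(1-s))$, so that $W(\cdot,0)=u$, $W(\cdot,1)=1$, and $s\mapsto W(t,s)$ is $r$-Lipschitz uniformly in $t$.

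The heart of the argument is a spectral-flow/winding computation for the rank-one eigenvalue $f(t)=e^{i\pi t(2-1/(n-1))}$ of $u(t)$. Choose a fine partition $0=s_0<s_1<\cdots<s_N=1$ with $s_i-s_{i-1}=1/N$, so that on each piece the $s$-length is at most $r/N\ll\pi/8$. Apply Lemma \ref{connt} iteratively to obtain nested subintervals $[c_i,d_i]\subset[c_{i-1},d_{i-1}]$ and continuous rank-one decompositions $W(t,s_i)=g_i(t)q_i(t)+v_i(t)$. Interpolating in $s$ via Lemma \ref{specdist} yields a continuous map $G:[c_N,d_N]\times[0,1]\to\T$ with $G(t,s)\in{\rm sp}(W(t,s))$, $G(t,0)=f(t)$, and $G(t,1)=1$. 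Lifting $G$ to $\tilde G:[c_N,d_N]\times[0,1]\to\R$ with $\tilde G(t,0)=\pi t(2-1/(n-1))$, the condition $G(\cdot,1)\equiv 1$ forces $\tilde G(\cdot,1)\equiv 2\pi m$ for some integer $m$ by continuity. Since Lemma \ref{specdist} bounds the circle-length of $s\mapsto G(t,s)$ by that of $s\mapsto W(t,s)$, which is at most $r$, one obtains
\begin{equation*}
|2\pi m-\pi t(2-1/(n-1))|\le r\qquad\text{for all }t\in[c_N,d_N].
\end{equation*}
Taking $t$ close to $c_N$ (chosen near $0$) and using $r<2\pi$ forces $m=0$; then taking $t$ close to $d_N$ (chosen near $1$) yields $\pi(2-1/(n-1))\le r$, contradicting the choice of $r$.

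The main obstacle is to arrange $c_N\to 0$ and $1-d_N\to 0$ as the partition is refined. The spectral gap $|f(t)-z(t)|=2|\sin(\pi t\,n(2n-3)/(2(n-1)^2))|$ vanishes linearly at $t=0$ and, for $n\ge 3$, collapses at the unique $t^*\in(0,1)$ with $f(t^*)=z(t^*)$, so the gap condition required by Lemma \ref{connt} at each step is delicate. The key is that the rank-one summand in $u(t)=f(t)e_1+z(t)(1-e_1)$ is governed by the constant projection $e_1$, which continues unambiguously through any spectral degeneracy, so the rank-one sheet is defined on all of $[0,1]$ even where the spectrum is not simple; combined with a pre-perturbation via Lemma \ref{CET} to ensure simple spectrum outside small fixed neighbourhoods of $\{0,t^*,1\}$, the shrinkage of $[c_i,d_i]$ at each of the $N$ iterations can be absorbed into these neighbourhoods, which can be made to vanish independently of $N$ as $r$ is chosen strictly below $\pi(2-1/(n-1))$.
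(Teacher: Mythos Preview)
Your overall plan (reduce the second and third assertions to the estimate ${\rm cel}(u)\ge\pi(2-1/(n-1))$, and establish the latter by tracking the rank-one eigenvalue of $u$ through the homotopy $W(t,s)$ via iterated applications of Lemma~\ref{connt}) coincides with the paper's approach. However, the way you finish the main estimate has a genuine quantitative gap.

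The shrinkage in Lemma~\ref{connt} is not a boundary effect near the gap-collapse points $\{0,t^*,1\}$; it is intrinsic to the step. Each application of the lemma shrinks the eigenvalue arc by $C_1+\delta$ on each side, where $C_1$ is the $s$-length of that step. Summed over your $N$ equal steps (each of $s$-length at most $r/N$), the total shrinkage on each side is $\sum C_1\approx r$, independently of $N$ and independently of any pre-perturbation via Lemma~\ref{CET}. Thus if the initial arc is $\{e^{i\theta}:\theta\in[0,\pi(2-1/(n-1))]\}$, after the full iteration the surviving arc is at best $\{e^{i\theta}:\theta\in[r,\,\pi(2-1/(n-1))-r]\}$, and the corresponding $t$-interval $[c_N,d_N]$ satisfies $\pi c_N(2-1/(n-1))\approx r$ and $\pi d_N(2-1/(n-1))\approx\pi(2-1/(n-1))-r$. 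Your inequality $|2\pi m-\pi t(2-1/(n-1))|\le r$ then yields, at $t=c_N$, that $m=0$, but at $t=d_N$ only $\pi(2-1/(n-1))-r\le r$, i.e.\ $r\ge\pi(2-1/(n-1))/2$. So the winding argument as stated loses a factor of $2$. (There is also the difficulty that near $s=1$ all eigenvalues of $W(t,s)$ cluster at $1$, so the gap hypothesis of Lemma~\ref{connt} fails for the last steps; but the factor-of-$2$ loss is already fatal.)

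The paper circumvents this by \emph{not} iterating all the way to $s=1$. It chooses the step size $d$ with $kd=\pi(1-1/(n-1))$ and iterates only $k-1$ times, consuming length $(k-1)d\approx\pi(1-1/(n-1))$. After this the arc has shrunk by roughly $\pi(1-1/(n-1))$ on each side and hence still contains the antipode $e^{i\pi}=-1$. Since $-1\in{\rm sp}(W(\cdot,s_{k-1}))$, the remaining path $\{W(\cdot,s):s\in[s_{k-1},1]\}$ to the identity must have length at least $\pi$ (an eigenvalue must travel from $-1$ to $1$). Adding the two pieces gives $r\ge (k-1)d+\pi\to\pi(2-1/(n-1))$. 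The key idea you are missing is this halfway stopping time combined with the elementary lower bound $\pi$ for the second half; trying to run the eigenvalue sheet over the whole interval $[0,1]$ in $s$ eats the entire budget $r$ in shrinkage and cannot recover the full constant.
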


\begin{proof}
Let $u$ be as in \ref{uniexam}.  Fix $\ep>0.$  We will keep notation above.
We write
$$
u(t)=f(t)e_1+v(t).
$$

Let $0<d\le \ep/2$ and $k\ge 1$ be an integer such that $kd=\pi(1-1/n-1).$
Let $0<a_0<b_0<1$ such that
$$
f(a_0)=e^{id+\ep/k2}\andeqn f(b_0)=e^{i\pi(2-1/n-1))-d-\ep/k2}.
$$
Let $0<s_0<1$ such that ${\rm length}\{W(t, s): s\in [0, s_0]\}=d.$
It follows from \ref{connt} that there exists $a_0<a_1<b_1<b_0$ such that
\beq\label{nc01L-3}
W(t,s_0)=g_1(t)q_1+z_1(t)\tforal  t\in [a_1, b_1],
\eneq
where $q_1$ is a rank one projection, $z_1\in (1-q_1)C([a_1, b_1], M_n)(1-q_1),$
$$
\{g_1(t): t\in [a_1, b_1]\}=\{e^{it}: t\in [2d+\ep/k2, \pi(2-1/n-1))-2d-\ep/k2)]\}
$$
with $g_1(a_1)=e^{i(2d+\ep/k2)}$ and $g_1(b_1)=e^{i (\pi(2-1/n-1))-2d-\ep/k2)},$
${\rm sp}(z_1)\subset S_1,$
where $S_1$ is the subset of $\T$ such that every point in $S_1$ is connected by a rectifiable
continuous path from $\{e^{it}: t\in [-{(2-1/n-1)\pi\over{n-1}}, 0]\}.$
In particular,
\beq\label{nc01L-4}
S_1\subset \{e^{it}: t\in [-{(2-1/(n-1))\pi\over{n-1}}-d, +d].
\eneq
Let $1>s_1>s_0$ such that ${\rm length}\{W(t, s): s\in [s_0, s_1]\}=d.$  By repeating above,
one obtains $a_1<a_2<b_2<b_1$ such that
\beq\label{nc01L-5}
W(t,s_1)=g_2(t)q_2+z_2(t)\,\,\, t\in [a_2, b_2],
\eneq
where $q_2$ is a rank one projection, $z_2\in (1-q_2)C([a_2, b_2], M_n)(1-q_2),$
\beq\label{nc01L-6}
&&\hspace{-0.8in}\{g_2(t): t\in [a_2, b_2]\}=\\
&&\hspace{-0.3in}\{e^{it}: t\in [2d+\ep/k2+d+\ep/k4, \pi(2-1/(n-1))-2d-\ep/k2-d-\ep/k4)]\}\\
&&=\{e^{it}: t\in [3d+\ep/k2+\ep/k4, \pi(2-1/(n-1))-3d-\ep/k2-\ep/k4)]\}
\eneq
with $g_2(a_2)=e^{i(3d+\ep/k2+\ep/k4})$ and $g_2(b_2)=e^{i (\pi(2-1/n-1))-3d-\ep/k2-\ep/k4)},$
${\rm sp}(z_1)\subset S_2,$
where $S_2$ is the subset of $\T$ such that every point in $S_2$ is connected by a rectifiable
continuous path from $\{e^{it}: t\in [-2{(1-1/n-1)\pi\over{n-1}}-d, d]\}.$
In particular,
\beq\label{nc01L-7}
S_2\subset \{e^{it}: t\in [-{(2-1/(n-1))\pi\over{n-1}}-2d, 2d]\}.
\eneq
By repeating this argument $k-1$ times,
We obtain $1>s_{k-1}>s_{k-2}$ such that
\beq\nonumber
&&\hspace{-0.3in}{\rm length}\{W(t, s): s\in [0, s_{k-1}]\}=(k-1)d=\pi(1-1/(n-1))-d\andeqn\\\nonumber
&&\hspace{-0.3in}\pi\in \{e^{it}: t\in [(k-1)d+\sum_{j=1}^{k-1}\ep/k2^j,  (2-1/(n-1))\pi-(k-1)d+\sum_{j=1}^{k-1}\ep/k2^j)]\}\subset {\rm sp}(W(t,s_{k-1})).
\eneq
Thus the minimum length of continuous path from $W(t, s_{k-1})$ to $1$ is at least $\pi.$
Thus
\beq\label{nc01L-9}
{\rm length}\{u(t)\} +\ep/16 &\ge & \pi+(k-1)d\\
&= &\pi +\pi(1-1/(n-1))-d\ge \pi(2-1/(n-1))-\ep/2
\eneq
for all $\ep>0.$
It follows that
\beq\label{nc01L-10}
{\rm length}\{u(t)\}\ge \pi(2-1/(n-1)).
\eneq

 \end{proof}

\begin{NN}\label{Ex2}

{\rm
Fix an integer $n>12$ and let $k_0\ge 0.$
Suppose that $0\le k\le k_0.$
Let $N=mn +k$  and let $N_0=mn.$
Consider a unitary $u_{00}\in C([0,1], M_{N_0}):$
\beq\label{Ex2-1}
u_{00}=e^{\pi i t(2-1/(n-1))}P_1+e^{-\pi i{t(2-1/(n-1))\over{(n-1)}}}P_2,
\eneq
where $P_1, P_2\in C([0,1], M_{N_0})$ are constant projections with ${\rm rank}P_1=m$ and ${\rm rank}P_2=(n-1)m.$
Define
\beq\label{Ex2-2}
u_0=u_{00}+v_0\in C([0,1], M_N),
\eneq
where $v_0\in (1-(P_1+P_2))C([0,1], M_N)(1-(P_1+P_2))$ is another unitary
such that ${\rm det}(v_0(t))=1$ for each $t\in [0,1]$ and
$v_0=\sum_{j=1}^{k} \lambda_j e_j,$ where
$\{e_1, e_2,...,e_k\}$ is a set of mutually orthogonal rank one constant projections in $(1-P_1-P_2)C([0,1], M_N)(1-P_1-P_2).$  Note
that ${\rm rank}(1-(P_1+P_2))=k.$

}
\end{NN}

\begin{lem}\label{closespec}
Let $n\ge 1$ be a given integer and let $\ep>0.$
There exists $\dt>0$ satisfying the following:
Choose $m_0>128(k_0+1)n\pi /\ep,$
for any unitary $u$ given in \ref{Ex2} with $m\ge m_0$ and any choice of $v_0$ as in \ref{Ex2}, if $v\in C([0,1], M_N)$ is another unitary
such that
$$
\|u-v\|<\dt,
$$
then
\beq\label{close-1}
&&|\mu_{tr, t, v}(\{e^{is}: s\in [t\theta_0-\ep/2, t\theta_0+\ep/2]\})-1/n|<\ep\andeqn\\
&&\hspace{-0.7in}|\mu_{tr, t, v}(\{e^{is}: s\in [-t\theta_0/(n-1)-\ep/2, -t\theta_0/(n-1)+\ep/2\})-(n-1)/n|<\ep
\eneq
for all $t\in [1/(n-1), 1],$
where $\mu_{tr, t, v}$ is the probability measure given by
$tr\circ \pi_t\circ \psi,$ where $\psi: C(\T)\to C([0,1], M_N)$ is the \hm\, defined by $\psi(f)=f(v)$ for all $f\in C(\T),$ and where
$tr$ is the normalized trace on $M_N.$
\end{lem}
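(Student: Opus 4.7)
The plan is to show that, at each fixed $t$, the empirical eigenvalue distribution of $v(t)$ is forced, by the norm estimate $\|u(t)-v(t)\|<\delta$, to cluster near the two eigenvalues of the explicit summand $u_{00}(t)$, up to a negligible contribution coming from the $k$ ``extra'' eigenvalues of $v_0(t)$. The key input is the standard perturbation theory for normal operators: if $U,V\in M_N$ are unitary with $\|U-V\|<\delta$ and $D$ is an open disc in $\T$ whose boundary is at distance more than $\delta$ from $\mathrm{sp}(U)$, then the spectral projection $\chi_D(V)$ has the same rank as $\chi_D(U)$ (continuity of Riesz projections for normal matrices with a separated spectrum).

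First I would set $\lambda_+(t)=e^{it\theta_0}$ and $\lambda_-(t)=e^{-it\theta_0/(n-1)}$ with $\theta_0=\pi(2-1/(n-1))$, and record that $u(t)$ has exactly $m$ eigenvalues equal to $\lambda_+(t)$, exactly $(n-1)m$ equal to $\lambda_-(t)$, and $k\le k_0$ more eigenvalues coming from $v_0(t)$. For $t\in[1/(n-1),1]$, the angular distance between $\lambda_+(t)$ and $\lambda_-(t)$ is $t\theta_0\cdot n/(n-1)\ge n\theta_0/(n-1)^2$, a positive constant $c(n)$ depending only on $n$. I would then choose $\delta$ so small that $\delta<\epsilon/8$ and also $\delta<c(n)/8$, which simultaneously ensures (after shrinking $\epsilon$ to $\min\{\epsilon,c(n)/4\}$ if necessary, which only makes the conclusion stronger) that the two closed arcs
\[
I_+(t)=\{e^{is}:|s-t\theta_0|\le\epsilon/2\},\qquad I_-(t)=\{e^{is}:|s+t\theta_0/(n-1)|\le\epsilon/2\}
\]
are disjoint and separated by a distance exceeding $2\delta$.

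Next, applying the perturbation statement above with $D$ a slight enlargement of $I_+(t)$ whose boundary stays at distance $>\delta$ from $\{\lambda_-(t)\}\cup\mathrm{sp}(v_0(t))$ --- or a slight shrinking if some eigenvalue of $v_0(t)$ happens to lie inside $I_+(t)$ --- I would conclude that the number of eigenvalues (with multiplicity) of $v(t)$ inside $I_+(t)$ equals $m$ plus the number of eigenvalues of $v_0(t)$ that happen to fall into $I_+(t)$, which is at most $k$. Hence
\[
\frac{m}{N}\;\le\;\mu_{tr,t,v}(I_+(t))\;\le\;\frac{m+k}{N}.
\]
Since $m\ge m_0>128(k_0+1)n\pi/\epsilon$ and $N=mn+k$, one computes directly $|m/N-1/n|=k/(nN)\le k_0/(nm_0)<\epsilon/2$ and $k/N\le k_0/m_0<\epsilon/2$, whence $|\mu_{tr,t,v}(I_+(t))-1/n|<\epsilon$. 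The same argument, with roles of $\lambda_+$ and $\lambda_-$ interchanged and multiplicity $(n-1)m$ in place of $m$, gives $|\mu_{tr,t,v}(I_-(t))-(n-1)/n|<\epsilon$.

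The main obstacle is not an individual eigenvalue displacement estimate but the bookkeeping of multiplicities: one must argue that exactly $m$ eigenvalues of $v(t)$ (not just ``approximately $m$'') lie in a small arc about $\lambda_+(t)$, and this requires the Riesz projection/rank-continuity ingredient applied uniformly in $t\in[1/(n-1),1]$. Once $\delta$ is chosen smaller than the separation $c(n)/8$ and smaller than $\epsilon/8$, the rank count and the arithmetic of the previous paragraph finish the proof.
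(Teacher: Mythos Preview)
Your approach is correct in spirit but differs from the paper's, and there is one detail you should tighten.

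\textbf{Comparison with the paper.} The paper does \emph{not} use rank--continuity of Riesz projections. Instead it chooses two continuous bump functions $f_{1,\zeta}\ge \chi_{I_+(t)}\ge f_{2,\zeta}$ (supported on arcs of width $\ep$ and $\ep/2$ centred at $\zeta$), picks $\delta$ so small that $\|f_{j,\zeta}(u)-f_{j,\zeta}(v)\|<\ep/32$ for every $\zeta\in\T$ whenever $\|u-v\|<\delta$, and then sandwiches
\[
tr\bigl(f_{2,e^{it\theta_0}}(v(t))\bigr)\ \le\ \mu_{tr,t,v}(I_+(t))\ \le\ tr\bigl(f_{1,e^{it\theta_0}}(v(t))\bigr).
\]
Since $tr(f_{j,e^{it\theta_0}}(u(t)))$ is computed explicitly up to an error $k_0/N$, the estimate follows. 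This ``soft'' functional--calculus argument produces a $\delta$ depending only on $\ep$, not on $n$ or $k_0$. Your Riesz--projection argument is more direct and geometric: it literally counts eigenvalues. Both routes arrive at the same two--sided bound $m/N\le\mu_{tr,t,v}(I_+(t))\le (m+k_0)/N$, after which the arithmetic with $m_0$ is identical.

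\textbf{A point to fix.} Your sentence ``the number of eigenvalues of $v(t)$ inside $I_+(t)$ equals $m$ plus the number of eigenvalues of $v_0(t)$ that happen to fall into $I_+(t)$'' is not literally true: eigenvalues of $v_0(t)$ lying within $\delta$ of $\partial I_+(t)$ can drift in or out under the perturbation. What you actually need (and what you implicitly use) is an arc $D'\subseteq I_+(t)$ containing $\lambda_+(t)$ and an arc $D\supseteq I_+(t)$ missing $\lambda_-(t)$, each with $\mathrm{dist}(\partial D,\mathrm{sp}(u(t)))>\delta$. To guarantee such $D,D'$ exist \emph{for every placement of the $k_0$ eigenvalues of $v_0$}, your $\delta$ must also satisfy something like $\delta<\ep/\bigl(4(k_0+1)\bigr)$, so that on each side of $\lambda_+(t)$ there is a gap of length $>2\delta$ inside $I_+(t)$ free of $\mathrm{sp}(v_0)$. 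Your stated choice $\delta<\min\{\ep/8,c(n)/8\}$ omits this $k_0$--dependence. Since $k_0$ is fixed in \ref{Ex2}, adding that constraint is harmless and completes your argument; note, however, that the paper's bump--function method avoids this issue entirely and yields a $\delta$ independent of $k_0$.
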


\begin{proof}
Let $\ep>0.$ Choose $m_0\ge 1$ such that $\pi/m_0<\ep/128(k_0+1)n.$
Let $f_{1,t}, f_{2,t}\in C(\T)$ be defined
by $f_{j,t}(s)=1,$ if $s\in \{e^{i\theta}: \theta\in [t-\ep/2^{j}, t+\ep/2^{j}]\},$ $f_{j,t}(s)=0$ if $s\not\in
\{e^{i\theta}: \theta\in (t-\ep/2^{j-1}, t+\ep/2^{j-1})\}$ and linear in between,
$j=1,2.$ Choose $\dt_1>0$ satisfying the following:
if $v', v''$ are two unitaries in any unital \CA\, with
$\|v'-v''\|<\dt_1$ then
\beq\label{close-2}
\|f_{j, e^{i\pi/2}}(v')-f_{j, e^{i\pi/2}}(v'')\|<\ep/32.
\eneq
Note that, for any $\theta\in [0, 2\pi),$
\beq\label{close-3}
f_{j, e^{i\pi/2}}(se^{-i\theta})=f_{j,e^{i(\theta+\pi/2)}})(s),\,\,\,j=1,2.
\eneq
Therefore
\beq\label{close-2}
\|f_{j, e^{i\theta}}(v')-f_{j, e^{i\theta}}(v'')\|
=\|f_{j, e^{i\pi/2}}(v'e^{i(\theta-\pi/2)})-f_{j, e^{i\pi/2}}(v''e^{i(\theta-\pi/2)})\|<\ep/32,
\eneq
since
$$
\|v' e^{i(\theta-\pi/2)}-v''e^{i(\theta-\pi/2)}\|=\|v'-v''\|<\dt_1.
$$

Therefore, if $\|u-v\|<\dt,$
\beq\label{close-3}
|\tau(f_{j,t}(u))-\tau(f_{j,t}(v))|<\ep/32
\eneq
for all $t\in \T$ and for all $\tau\in T(C([0,1], M_N),$ $j=1,2.$
Thus, in particular,
\beq\label{close-4}
|tr\circ \pi_t(f_{j, s}(u))-tr\circ \pi_t(f_{j,s}(v))|<\ep/16
\eneq
for all $s\in \T, t\in [0,1],$
$j=1,2.$
Note that
\beq\label{close-4+}
|{m\over{N}}-1/n|&=&|{mn\over{(mn+k_0)n}}-
{mn+k_0\over{(mn+k_0)n}}|\\
&=&{k_0\over{Nn}}.
\eneq
It follows that
\beq\label{close-5}
\hspace{-0.4in}|tr\circ \pi_t (f_{j,e^{it(2-1/(n-1))\pi}}(u))-1/n|<k_0/N+k_0/Nn<k_0/mn+k_0/Nn<\ep/64
\eneq
for all $t\in [1/(n-1), 1],$ $j=1,2.$
Note also that
\beq\label{close-5+}
tr\circ \pi_t(f_{2,e^{it(2-1/(n-1))\pi}}(u))\le
\mu_{tr, v,t}(I_t)\le  tr\circ \pi_t(f_{1,e^{it(2-1/(n-1))\pi}}(u)),
\eneq
where  $I_t=\{e^{is}: s\in [t\theta_0-\ep/2, t\theta_0-\ep/2]\}.$
Combing this with (\ref{close-5}) and (\ref{close-4}), we obtain that
\beq\label{close-6}
|\mu_{tr, v,t}(I_t)-1/n|<5\ep/64
\eneq
for all $t\in [1/(n-1), 1].$  Similarly,
\beq\label{close-7}
&&|tr\circ \pi_t(f_{j, e^{it(2-1/(n-1))\pi/(n-1)}})-(n-1)/n|<k_0/nm+k_0/Nn\\
&&\andeqn
|\mu_{tr, v, t}(J_t)-(n-1)/n|<5\ep/54,
\eneq
where
$$
J_t=\{e^{is}: s\in [-t\theta_0/(n-1)-\ep/2, -t\theta_0/(n-1)+\ep/2]\}.
$$
\end{proof}

\begin{lem}\label{Ex2leng}
Let $n\ge 12.$ There exists $\dt>0$ and integer $m_0>2^{15}(k_0+1)n^3\pi^2$ satisfying the following:
If $u$ is as in \ref{Ex2} and $h\in C([0,1], M_N)_{s.a.}$ with
$\|h\|\le 2\pi$ such that
\beq\label{Ex2len-1}
\|u-\exp(ih)\|<\dt,
\eneq
then
\beq\label{Ex2len-2}
\|h\|\ge 2(1-1/(n-1))\pi.
\eneq
\end{lem}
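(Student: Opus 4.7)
The plan is to argue by contradiction. Assume $\|h\| < M_0 := 2(1-1/(n-1))\pi$; I will produce a contradiction via a trace computation on $h(t)$. Write $\theta_0 := \pi(2-1/(n-1))$ as in \ref{closespec}. First I choose a small $\ep' > 0$ of order $1/n$ and apply \ref{closespec} to extract a tolerance $\delta_0$; taking $\delta < \min\{\delta_0, 1/(100N)\}$ and $m_0$ large guarantees its hypotheses. Because the two bands of $u_{00}$ contribute exponents $m t\theta_0$ and $-(n-1)m \cdot t\theta_0/(n-1)$, which cancel, and because $\det(v_0)\equiv 1$, we have $\det(u(t))\equiv 1$. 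From $\det(\exp(ih(t))) = e^{i\Tr(h(t))}$ and $\|u - \exp(ih)\| < \delta$ one obtains $e^{i\Tr(h(t))} = 1 + O(N\delta)$, so $\Tr(h(t)) \in 2\pi k(t) + O(N\delta)$ for a continuous, hence constant, integer $k(t) \equiv k_*$.

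To pin down $k_*$ I evaluate at $t = 0$: the eigenvalue $1$ of $u(0)$ has multiplicity $mn$, and since $\|h\| < 2\pi$ the corresponding eigenvalues of $h(0)$ cluster near $0$ (not $\pm 2\pi$); the remaining $k \le k_0$ eigenvalues are bounded in absolute value by $\|h\| < 2\pi$. Consequently $|\Tr(h(0))| \le 2\pi k_0 + O(N\delta)$, forcing $|k_*| \le k_0 + 1$.

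The heart of the argument is the trace at $t = 1$. A direct computation gives $\theta_0 > M_0$ (trivially, since $2 - 1/(n-1) > 2 - 2/(n-1)$) and $2\pi - \theta_0/(n-1) > M_0$ (since $(2n-3)/(n-1)^2 < 2/(n-1)$). Thus among the preimages in $[-M_0, M_0]$ of an eigenvalue of $u(1)$ under $\exp(i\,\cdot)$, only $\theta_0 - 2\pi$ is accessible (not $\theta_0$) for the top cluster, and only $-\theta_0/(n-1)$ is accessible (not $2\pi - \theta_0/(n-1)$) for the bottom. By \ref{closespec}, roughly $N/n$ eigenvalues of $h(1)$ lie near $\theta_0 - 2\pi$ and $N(n-1)/n$ near $-\theta_0/(n-1)$, each within $\ep'/2$. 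Weighting gives
\[
\Tr(h(1)) = \frac{N}{n}(\theta_0 - 2\pi) + \frac{N(n-1)}{n}\left(-\frac{\theta_0}{n-1}\right) + O(\ep' N) = -\frac{2\pi N}{n} + O(\ep' N),
\]
using the exact cancellation of the $\theta_0$-terms. Equating with $\Tr(h(1)) = 2\pi k_* + O(N\delta)$ and $|k_*| \le k_0 + 1$ yields $m \le k_0 + 1 + O(\ep' N) + O(N\delta)$. With $\ep', \delta$ of order $1/n$ and $m \ge m_0 > 2^{15}(k_0+1)n^3\pi^2$, this fails, giving the contradiction.

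The main obstacle is the careful calibration of $\delta$, $\ep'$, and $m_0$ so that all error terms remain much smaller than $2\pi m$. The algebraic core is the identity $\frac{\theta_0 - 2\pi}{n} - \frac{\theta_0}{n} = -\frac{2\pi}{n}$, which shows that forcing every ``top'' eigenvalue of $h(1)$ onto the negative branch $\theta_0 - 2\pi$ creates a trace deficit of about $2\pi m$ that cannot be reconciled with the near-zero trace inherited continuously from $t=0$.
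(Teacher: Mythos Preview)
Your strategy matches the paper's: argue by contradiction, use $\det u(t)=1$ to pin $\Tr(h(t))$ near a fixed integer multiple of $2\pi$, then evaluate at two values of $t$ to get incompatible answers. The paper compares $t=1$ (where the constraint $\|h\|<M_0$ forces the top cluster onto the branch $\theta_0-2\pi$, giving $L/N\approx -1/n$) with $t=1/n$ (where both clusters sit on their natural small branches, giving $L/N\approx 0$). Your substitution of $t=0$ for the second point is a legitimate variant, and arguably cleaner since $u(0)$ is essentially the identity.

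The execution, however, has a genuine quantifier problem. You take $\delta<1/(100N)$, but $\delta$ must be chosen once and for all, independently of $m$ (hence of $N=mn+k$, which is unbounded). This matters for your claim that $k(t)$ is constant: that relies on the intervals $2\pi k+O(N\delta)$ being disjoint, which fails as soon as $N\delta\gtrsim\pi$. The paper sidesteps this by writing $u=\exp(ih_0)\exp(ih)$ with $\|h_0\|<2\arcsin(\delta/2)$; then $\Tr(h_0(t))+\Tr(h(t))$ lies \emph{exactly} in $2\pi\Z$ and is therefore exactly constant, so the integer $L$ is unambiguous regardless of the size of $N\delta$. Once you adopt this device, your bound at $t=0$ becomes $|L|\le k_0+O(N\delta)$ rather than $k_0+1$, and the final comparison reads $N/n\lesssim k_0+O(\ep' N)+O(N\delta)$. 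For this to fail you need $\ep'$ and $\delta$ of order $1/n^2$ (the paper takes $\ep=1/(2^8\pi n^2)$), not $1/n$ as you state; with $\ep'\sim 1/n$ the error $O(\ep' N)\sim N/n\sim m$ is the same size as the main term and no contradiction results. With these calibrations fixed, your argument goes through.
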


\begin{proof}
Let $\theta_0=(2-1/(n-1))\pi.$
Let $\ep=1/2^8\pi n^2.$ Choose $\dt>0$ (in place of $\dt$) and $m_0\ge 128(k_0+1)n\pi/\ep=2^{15}(k_0+1)n^3\pi^2$ be as required by
\ref{closespec} for the given $\ep$ and $n.$ We also require
that $\dt<\ep/64.$
Note that ${\rm det}(u(t))=1$ for all $t\in [0,1].$

Suppose that $h\in C([0,1], M_N)_{s.a.}$ with
\beq\label{Ex2len-3}
\|u-\exp(i h)\|<\dt\andeqn \|h\|<2(1-1/(n-1))\pi.
\eneq
Then
\beq\label{Ex2len-3+}
u=\exp(ih_0)\exp(ih)
\eneq
for some self-adjoint  $h_0\in C([0,1], M_N)$ with
\beq\label{Ex2len-3+1}
\|h_0\|<2\arcsin (\ep/128).
\eneq
It follows that
$$
tr(h_0(t))+tr(h(t))=2L\pi /N
$$
for every $t\in [0,1]$  and for some integer $L.$
Therefore
\beq\label{Ex2len-3+2}
|tr(h(t))-2L\pi/N|<2\arcsin(\ep/128)
\eneq
for all $t\in [0,1].$

Let  $\{a_1,a_2,...,a_N\}$ be the  set of eigenvalues of $h(1)$
counting multiplicity.

Let $A_1=\{e^{i a_1}, e^{ ia_2},...,e^{i a_N}\}\cap I_1,$ $A_2=\{e^{ia_1}, e^{i a_2},...,e^{i a_N}\}\cap J_1$ and $A_3=\{e^{ia_1}, e^{ia_2},...,e^{ia_N}\}\cap (\T\setminus (I_1\cup J_1)),$ where
\beq\label{Ex2len-4}
I_1&=&\{e^{is}: s\in [\theta_0-\ep/2, \theta_0+\ep/2]\}\andeqn\\
J_1&=&\{e^{is}: s\in [-\theta_0/(n-1)-\ep/2,  -\theta_0/(n-1)+\ep/2]\}.
\eneq
Since $\|h\|<2(1-1/(n-1))\pi,$
\beq\label{Ex2len-5}
a_j&<&0,\,\,\,{\rm if}\,\,\, e^{ia_j}\in A_1\andeqn\\
a_j&<&0,\,\,\,{\rm if}\,\,\, e^{ia_j}\in A_2.
\eneq

Put $X_N=\{1,2,...,N\}.$
Define $H_1\in L^1(X_N,\mu)$ with normalized counting measure $\mu$ on $X_N$
by $H_1(j)=a_j,$ $j=1,2,...,N,$ and
\beq\label{Ex2len-6}
F_1(j)&=&{-\pi\over{n-1}},\,\,\, {\rm if}\,\,\, e^{ia_j}\in A_1,\\
F_1(j)&=&{-\theta_0\over{n-1}},\,\,\, {\rm if},\,\,\, e^{ia_j}\in A_2
\eneq
and $F_1(j)=0$  if $e^{ia_j}\in A_3.$
By \ref{closespec},
\beq\label{Ex2len-7}
\int_N |e^{i H_1}-e^{iF_1}|d\mu&=&
\int_{e^{ia_j}\in A_1} |e^{ia_j}-e^{-i\pi/(n-1)}|d\mu(j)\\
&&+\int_{e^{ia_j}\in A_2} |e^{ia_j}-e^{-i \theta_0/(n-1)}|d\mu(j)\\
&&+\int_{e^{ia_j}\in A_3}|e^{ia_j}-1|d\mu(j)\\\label{Ex2len-7+}
&<&\ep/2+\ep/2+2k_0/N<2\ep.
\eneq
We also have
\beq\label{Ex2len-8-}
(1/n)\sum_{j=1}^N F_1(j)={-\pi\over{n-1}}\mu(A_1)+{-\theta_0\over{n-1}}\mu(A_2).
\eneq
By \ref{closespec}, we estimate that
\beq\label{Ex2len-8}
|(1/n)\sum_{j=1}^N F_1(j)-({-\pi/(n-1))\over{n}}+{-\theta_0\over{n}})|\\
<|{-\pi/(n-1))\over{n}}|\ep+|{-\theta_0\over{n}}|\ep<2\ep,
\eneq
or,
\beq\label{Ex2len-9}
|(1/n)\sum_{j=1}^n F_1(j)+2\pi/n|<2\ep.
\eneq
It follows from (\ref{Ex2len-3+2}), (\ref{Ex2len-9}) and (\ref{Ex2len-7})-
(\ref{Ex2len-7+}) that
\beq\label{Ex2len-10}
|L/N+1/n|<4\ep+\ep/64<1/64n^2
\eneq
Let $\{b_1, b_2,...,b_N\}$ be the set of eigenvalues of $h(1/n).$
Let $B_1=\{b_1, b_2,...,b_N\}\cap I_{1/n},$
$B_2=\{b_1, b_2,...,b_N\}\cap J_{1/n}$ and
$B_3=\{b_1,b_2,...,b_N\}\cap (\T\setminus I_{1/n}\cup J_{1/n}),$
where
\beq
I_{1/n}=\{e^{it}: t\in [\theta_0/n-\ep/2, \theta_0/n+\ep/2]\}\andeqn\\
J_{1/n}=\{e^{it}: t\in [-\theta_0/n(n-1)-\ep/2, -\theta_0/n(n-1)+\ep/2]\}
\eneq
Then, since $$\|h\|<2(1-1/(n-1))\pi,$$
\beq\label{Ex2len-11}
b_j&>&0,\,\,\,{\rm if}\,\,\, b_j\in B_1\andeqn\\
b_j&<&0\,\,\,\, {\rm if}\,\,\, b_j\in B_2.
\eneq
Define
\beq\label{Ex2len-12}
F_2(j)&=&(1/n)\theta_0 ,\,\,\,{\rm if}\,\,\, b_j\in B_1\\
F_2(j)&=& -(1/n)(\theta_0/(n-1)),\,\,\,{\rm if}\,\,\, b_j\in B_2\andeqn\\
F_2(j)=0
\eneq
By \ref{closespec},
\beq\label{Ex2len-13}
\int_N |e^{i b_j}-e^{iF_2(j)}|d\mu(j)&=&
\int_{e^{ib_j}\in B_1} |e^{ib_j}-e^{i\theta_0/n}|d\mu(j)\\
&&+\int_{e^{ib_j}\in B_2} |e^{ib_j}-e^{-i \theta_0/n(n-1)}|d\mu(j)\\
&&+\int_{e^{ib_j}\in B_3}|e^{ib_j}-1|d\mu(j)\\\label{Ex2len-13+}
&<&\ep/2+\ep/2+2k_0/N<2\ep.
\eneq
We have
\beq\label{Ex2len-14}
(1/n)\sum_{j=1}^N F_2(j)={\theta_0\over{n}}\mu(B_1)-{\theta_0\over{n(n-1)}}\mu(B_2).
\eneq
By \ref{closespec}, as above, we estimate that
\beq\label{Ex2len-14}
|(1/n)\sum_{j=1}^n F_2(j)-({\theta_0\over{n^2}}+{-\theta_0\over{n^2}})|<2\ep,
\eneq
or,
\beq\label{Ex2len-15}
|(1/n)\sum_{j=1}^n F_2(j)|<2\ep.
\eneq
It follows from (\ref{Ex2len-3+2}), (\ref{Ex2len-15}) and (\ref{Ex2len-13})-(\ref{Ex2len-13+}) that
\beq\label{Ex2len-16}
|L/N|<4\ep+\ep/64<1/64n^2.
\eneq
One then obtains a contradiction from (\ref{Ex2len-16}) and (\ref{Ex2len-10}).
Thus
\beq\label{Ex2len-17}
\|h\|\ge 2(1-1/(n-1))\pi.
\eneq
\end{proof}

\begin{lem}\label{Ex2ml}
Let $(G, G_+)$ be a countable  unperforated ordered  group.
Then there exists a unital simple \CA\, $A$ which is an inductive limit of interval algebras satisfying the following:

For any $\ep>0,$ there exists a unitary $u\in CU(A)$ and $\dt>0$ satisfying the following:
if $h\in A_{s.a.}$ with $\|h\|\le 2\pi$ such that
$$
\|u-\exp(ih)\|<\dt,
$$
then
$$
\|h\|\ge 2\pi-\ep.
$$
\end{lem}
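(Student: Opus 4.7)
The plan is to realize $A$ as a simple unital inductive limit $A = \varinjlim_k (B_k, \psi_k)$ of interval algebras $B_k = C([0,1], M_{N_k})$ with $(K_0(A), K_0(A)_+) = (G, G_+)$; the classical construction of Elliott and Thomsen produces such an $A$ for any suitable countable unperforated ordered group once an order unit has been chosen. The ranks $N_k$ grow to infinity, and the $\psi_k : B_k \to B_{k+1}$ can be written, up to conjugation by a unitary, as diagonals of point evaluations $f \mapsto \mathrm{diag}(f(\lambda_{k,1}(t)),\ldots, f(\lambda_{k,L_k}(t)))$ for continuous $\lambda_{k,j} : [0,1] \to [0,1]$. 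The critical design choice is to arrange that the identity map $\mathrm{id}_{[0,1]}$ appears with overwhelming relative multiplicity at each stage, with only a vanishing fraction of multiplicities assigned to the other spreading functions $\lambda_{k,j}$ needed to force simplicity of the limit.

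Given $\ep>0$, choose an integer $n\ge 12$ with $2\pi/(n-1) < \ep/2$, and let $m_0 = m_0(n)$ be the threshold from \ref{Ex2leng}. Pick $k_1$ so that $N_{k_1}\ge m_0 n$, and write $N_{k_1} = mn+k$ with $m\ge m_0$ and $0\le k < n$. Let $u_0\in B_{k_1}$ be the unitary constructed in \ref{Ex2} with these parameters, and set $u = \psi_{k_1,\infty}(u_0)\in A$, where $\psi_{k_1,\infty} : B_{k_1} \to A$ is the canonical map into the inductive limit. Because $\det u_0(t)=1$ for every $t\in[0,1]$, one has $u_0\in CU(B_{k_1})$ and hence $u\in CU(A)$.

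To prove the quantitative bound, suppose $h\in A_{s.a.}$ satisfies $\|h\|\le 2\pi$ and $\|u-\exp(ih)\|<\dt$ for some small $\dt>0$ to be chosen. By the inductive limit structure, approximate $h$ in norm to within $\dt$ by some $h_2\in (B_{k_2})_{s.a.}$ with $k_2\ge k_1$ large, keeping $\|h_2\|\le 2\pi$ (by a routine truncation). Put $u_2 = \psi_{k_1,k_2}(u_0)\in B_{k_2}$; then $\|u_2-\exp(ih_2)\|< 3\dt$. By the choice of connecting maps, the normalized tracial spectral distribution of $u_2(t)$ at each $t$ differs only by a controllable amount from that of $u_0(t)$; in particular, the concentration of masses approximately $1/n$ near $e^{it(2-1/(n-1))\pi}$ and $(n-1)/n$ near $e^{-it(2-1/(n-1))\pi/(n-1)}$ required by \ref{closespec} still holds for $u_2$, with only slightly weakened constants. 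The argument of \ref{Ex2leng} then carries over to $u_2$ inside $B_{k_2}$ and yields $\|h_2\|\ge 2(1-1/(n-1))\pi > 2\pi-\ep$, whence $\|h\|\ge 2\pi-\ep$ after undoing the approximation by shrinking $\dt$ appropriately.

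The main obstacle is balancing two competing requirements in the choice of the connecting maps: simplicity of $A$ forces the $\psi_k$ to spread eigenvalues nontrivially across $[0,1]$, while preservation of the pointwise spectral distribution of $u_0$ prefers the $\psi_k$ to be essentially identity embeddings. The resolution is to include only a small but nonzero fraction of non-identity multiplicities, decaying fast enough along the inductive system that all pointwise spectral measures of $u_2 = \psi_{k_1,k_2}(u_0)$ stay close to those of $u_0$ uniformly in $t\in[0,1]$; the resulting errors are absorbed into the numerical slack built into \ref{closespec} and \ref{Ex2leng} by taking $n$ slightly larger than the naive value dictated by $\ep$. A minor secondary issue is the choice of order unit for $G$, but the construction is flexible enough to accommodate any choice.
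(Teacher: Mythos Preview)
Your outline follows the same architecture as the paper's proof: build a simple AI-algebra whose connecting maps are overwhelmingly identity embeddings with only a small spreading piece, place a unitary of the type in \ref{Ex2} at a base stage, and then pull any approximating exponential back to a finite stage where \ref{Ex2leng} applies. Two points of comparison are worth noting.

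First, the paper uses multi-block interval algebras $A_k=C([0,1])\otimes C_k$ built over a simple AF-algebra $C=\varinjlim C_k$ with $K_0(C)=G$; your single-block model $B_k=C([0,1],M_{N_k})$ does not in general realize an arbitrary countable unperforated ordered group. This is easily repaired by working summandwise, but it does mean the Elliott--Thomsen construction you invoke must be the multi-block one.

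Second, and more substantively, the paper chooses the non-identity parts of the connecting maps to be \emph{point evaluations} rather than general continuous functions $\lambda_{k,j}$. The payoff is that a point evaluation sends the $t$-dependent unitary $u_0$ of \ref{Ex2} to a \emph{constant} unitary, which is then absorbed into the $v_0$-piece of \ref{Ex2}. Consequently the image $\psi_{k_1,k_2}(u_0)$ in every summand at every later stage is \emph{exactly} of the form in \ref{Ex2} (just with larger $m$ and a larger constant piece, whose relative rank is controlled by the multiplicity bounds), so \ref{Ex2leng} applies verbatim with no need to re-open its proof. Your plan, using general spreading maps, forces you to argue that the conclusions of \ref{closespec} and the trace calculation inside \ref{Ex2leng} still hold after the spectral measures have been perturbed; this can be made to work, but the paper's device of using constant evaluations sidesteps the issue entirely and gives a cleaner reduction.
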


\begin{proof}
Fix $1/2>\ep.$
Choose $n\ge 12$ such that $\pi/(n-1)<\ep/4.$
Let $k_0=n-1.$
Let $m_0'>2^{15}(k_0+1)n^3\pi^2$ (in place of $m_0$) be an integer required by \ref{Ex2leng} for the above mentioned $n$ and $k_0.$ Let $m_0=2m_0'.$

Let $C=\lim_{k\to\infty} (C_k,\phi_k)$ be a unital simple AF-algebra, where each $C_k$ is a unital finite dimensional \CA, such that
$(K_0(C), K_0(C)_+)=(G, G_+).$ We may assume that the map
$\phi_k: C_k\to C_{k+1}$ is   unital and injective.
We write
$$
C_k=M_{r(1,k)}\oplus M_{r(2,k)}\oplus \cdots M_{r(m(k),k)}.
$$
Let $\phi_{k,j}: M_{r(j,k)}\to C_{k+1}$ be the \hm\,
defined by $\phi_{k,j}=(\phi_k)|_{M_{r(j,k)}}.$
Define $\pi_{k,j}: C_k\to M_{r(j,k)}$ by the projection to the summand.
Set $\phi_{j,i,k}=\pi_{k+1, i}\circ \phi_{k,j}: M_{r(j,k)}\to M_{r(k+1, i)}.$
Note that $(\psi_{j,i,k})_{*0}$  is determined by its
multiplicity $M(j,i,k).$ Since $C$ is simple, without loss of generality, we may assume that
$r(j,1):=r(j)\ge 2n(m_0+1),$ $j=1,2,...,m(1).$ By passing to a subsequence if necessary, we may assume that
$M(i,j,k)\ge (2m_0+1)k.$
There is a set of $M(j,i,k)$ mutually orthogonal  projections  $\{e_{j,i,k,s}: s\}$ in $M_{r(i,k+1)}$ such that
each $e_{j,i,k}$ has rank $r(j,k),$
$$
\sum_{j=1}^{m(j)}\sum_{s=1}^{M(j,i,k)} e_{j,i,k,s}=1_{M_{r(i,k+1)}}.
$$
Put
$$
e(j,i,k)=\sum_{s=1}^{M(j,i,k)}e_{j,i,k,s}.
$$

We write
$$
r(j)=d(j)n+k(j),\,\,\, k(j)<n,
$$
where $d(j)\ge 2m_0.$

Denote $\theta_0=(2-1/(n-1))\pi,$ $j=1,2,...,m(1).$
Let $B_{j,k}=C([0,1], M_{r(j,k)}),$ $j=1,2,...,m(k),$ $k=1,2,....$
Let $\{t(0,k), t(1,k),...,t(k,k)\}$ be a partition
of $[0,1]$ such that $t(0,k)=0,$ $t(k,k)=1$ and
$t(i,k)-t(i-1,k)=1/(k+1),$ $i=1,2,...,k,$ $k=1,2,....$
Define $\psi_{j,i,k}: B_{j,k}\to e(j,i,k)B_{i,k+1}e(j,i,k)$ as follows:
\beq\label{2ml-1}
\psi_{j,i,k}(f)={\rm diag}(\underbrace{f,f,...,f}_{M(j,i,k)-k},f(t(1,k)),
f(t(2,k)),...,f(t(k,k)))
\eneq
for all $f\in M_{j,k}.$  Define $A_k=C([0,1])\otimes C_k.$
Note that
$$
A_k=\bigoplus_{j=1}^{m(k)}C([0,1], M_{r(j,k)}).
$$
Let $\psi_k: A_k\to A_{k+1}$ be the unital \hm\, given by the partial maps
$\psi_{j,i,k}.$
Define $A=\lim_{n\to\infty}(A_k,\psi_k).$ It is known such defined
$A$ is a unital simple \CA. Moreover,
$$
(K_0(A), (K_0(A))_+)=(G, G_+).
$$
Consider the unitaries
$$
u_j=e^{i\theta_0}p_{1,j}+e^{-i\theta_0/(n-1)}p_{2,j}+p_{3,j},
$$
where $\{p_{1,j}, p_{2, j}, p_{3,j}\}\subset M_{r(1,j)}$ are mutually orthogonal constant projections, $p_{1,j}$ has rank $d(j),$ $p_{2,j}$ has rank $(n-1)r(j)$ and
$p_{3,j}$ has rank $k(j)<n,$ $j=1,2,...,m(1).$
Define
\beq\label{2ml-2}
w=u_1\oplus u_2\oplus \cdots u_{m(1)}.
\eneq
Let $u=\psi_{1, \infty}(w),$ where $\psi_{1, \infty}$ is the \hm\, induced by the inductive limit system. Since each $u_j\in CU(C([0,1], M_{r(1,j)}),$ $u\in CU(A).$
We now verify that $u$ satisfies the assumption.
Let $\dt_1>0$ be as in \ref{Ex2leng} for $\ep/2$ (in place of $\ep$)
and $k_0=n-1.$ Let $\dt=\dt_1/2.$
Suppose that
there is a self-adjoint  element $h\in A_{s.a.}$ with $\|h\|\le 2\pi$ such that
\beq\label{2ml-3}
\|u-\exp(ih)\|<\dt.
\eneq
There is, for a sufficiently larger $k,$  a self-adjoint  element $h_1\in A_k$ for some $k\ge 1$ such
that
\beq\label{2ml-4}
\|h-\psi_{k, \infty}(h_1)\|<\ep/4\andeqn \|\psi_{1, k}(w)-\exp(ih_1)\|<2\dt=\dt_1.
\eneq
Consider a summand $A_{i,k}$ of $A_k.$
Note that $A_{i,k}=C([0,1], M_{r(i,k)}).$
We compute that
\beq\label{2ml-5}
\psi_{1,k}(w)=
e^{i\theta_0}P_{1,i,k}+e^{-i \theta_0/(n-1)}P_{2,i,k}+v_{0,i},
\eneq
where  $v_{0,i}\in P_{3,i,k}E_{i,k}P_{3,i,k}$ is a constant
unitary, $P_{1,i,k}, P_{2,i,k}, P_{3,i,k}$  are mutually orthogonal projections with
$$
P_{1,i,k}+P_{2,i,k}+P_{3,i,k}={\rm id}_{A_{i,k}},
$$
$P_{2,i,k}$ has rank $n-1$ times as much as $P_{1,i,k}$
and $P_{1,i,k}$ has rank at least $m_0$ times that of the rank of $P_{3,i,k}.$
Denote by $K_0$ the rank of $P_{3, i,k}.$ Then we have
$$
{\rm rank}P_{1,i,k}>2^{15} n^3(K_0+1)\pi^2.
$$
It follows from \ref{Ex2leng} that
\beq\label{2ml-6}
\|h_1\|\ge 2(1-1/(n-1)\pi=2\pi-2\pi/(n-1)\ge 2\pi-\ep/2
\eneq
Note that each $\psi$ is injective. Therefore
$$
\|\psi_{k, \infty}(h_1)\|=\|h_1\|\ge 2\pi-\ep/2.
$$
By (\ref{2ml-4}),
$$
\|h\|\ge 2\pi-\ep.
$$

\end{proof}

\begin{thm}\label{L2pi}
Let $(G_0, (G_0)_+)$ be a countable weakly unperforated Riesz group and let $G_1$ be any countable abelian group. There exists a unital
simple AH-algebra $A$ with tracial rank one such that
$$
(K_0(A), K_0(A)_+, K_1(A))=(G_0, (G_0)_+, G_1).
$$
Moreover, for any $\ep>0,$ there exists a unitary $u\in CU(A)$ and there exists $\dt>0$ satisfying the following:
If $h\in A_{s.a.}$ such that
$$
\|u-\exp(ih)\|<\dt,
$$
then
$$
\|h\|\ge 2\pi-\ep.
$$

\end{thm}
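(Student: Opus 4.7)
The plan is to construct the AH-algebra $A$ as an inductive limit whose early stages contain interval summands large enough to carry the test unitary of Lemma \ref{Ex2ml}, while arranging the connecting maps to realize both $K_0(A) = G_0$ and $K_1(A) = G_1$. The theorem then reduces to a local version of the argument in \ref{Ex2ml} applied inside one interval summand at a sufficiently deep stage of the inductive system.

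First, I would invoke the Elliott range theorem for simple AH-algebras of slow (indeed, no) dimension growth: there exists a unital simple AH-algebra $A$ with the prescribed Elliott invariant, and in particular $TR(A)\le 1$. More concretely, $A$ can be written as an inductive limit $A=\lim_k(A_k,\psi_k)$ where each $A_k$ is a finite direct sum of building blocks of the form $C([0,1],M_r)$ and $C(\T,M_r)$, with injective unital connecting maps. By passing to a cofinal subsystem (and adjusting the initial stage) I may assume that $A_1$ contains at least one interval summand $C([0,1], M_{r(1,1)})$ whose rank $r(1,1)$ satisfies $r(1,1)\ge 2n(m_0+1)$ with $n\ge 12$ and $m_0$ as in \ref{Ex2ml}, and that every partial map $\psi_{j,i,k}$ from an interval summand of $A_k$ into an interval summand of $A_{k+1}$ has the standard multiplicity form used in (\ref{2ml-1}), with multiplicities $M(j,i,k)\ge (2m_0+1)k$. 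The freedom to arrange this comes from the fact that the $K_1$-generators of $G_1$ can be absorbed by the circle summands, and the maps between interval summands can be prescribed independently of the $K_1$-data.

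Second, I would copy the construction of $u$ from \ref{Ex2ml}: inside the chosen interval summand of $A_1$ I put $w = u_1\oplus\cdots\oplus u_{m(1)}$ with each $u_j$ of the form $e^{i\theta_0}p_{1,j}+e^{-i\theta_0/(n-1)}p_{2,j}+p_{3,j}$ as in (\ref{2ml-2}), and set $u=\psi_{1,\infty}(w)$. Each $u_j$ lies in $CU(C([0,1],M_{r(1,j)}))$, so $u\in CU(A)$. For the lower bound, suppose $h\in A_{s.a.}$ satisfies $\|u-\exp(ih)\|<\dt$ with $\dt<\dt_1/2$ where $\dt_1$ is from \ref{Ex2leng}. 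Approximating $h$ by $\psi_{k,\infty}(h_1)$ for some self-adjoint $h_1$ at a sufficiently deep stage, and restricting to a single interval summand $A_{i,k}=C([0,1],M_{r(i,k)})$, I recover exactly the structure (\ref{2ml-5}) of Lemma \ref{Ex2ml}, with rank conditions $\mathrm{rank}(P_{1,i,k})>2^{15}n^3(K_0+1)\pi^2$ inherited from the growing multiplicities. Lemma \ref{Ex2leng} then yields $\|h_1\|\ge 2\pi - \ep/2$, and injectivity of the connecting maps propagates this to $\|h\|\ge 2\pi-\ep$.

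The main obstacle is the bookkeeping in Step one: simultaneously realizing the full Elliott invariant with $K_1(A)=G_1$ and preserving the multiplicity structure of the connecting maps between interval summands that is required by \ref{Ex2leng}. The $K_1$-data is carried by the circle summands and the class of the $C(\T)$-unitaries in them, and one must check that the existence theorems for AH-algebras (e.g.\ the Villadsen/Elliott constructions, or realizations via classification results such as those in \cite{Lninv}) can be arranged to keep the interval components and their connecting maps under the required combinatorial control, with multiplicities $M(j,i,k)$ growing at the prescribed rate. Once this is set up, the rest of the proof is a direct repetition of the argument in Lemmas \ref{Ex2leng} and \ref{Ex2ml}.
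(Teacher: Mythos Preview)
Your approach differs substantially from the paper's, and the step you flag as the ``main obstacle'' is a genuine gap that the paper circumvents rather than solves.

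You try to build $A$ directly as an inductive limit whose interval summands and connecting maps retain the combinatorial form of Lemma~\ref{Ex2ml}, while also realizing the prescribed $K_1(A)=G_1$ via circle summands. You then want to track $\psi_{1,k}(w)$ into a single interval summand at a deep stage and invoke Lemma~\ref{Ex2leng} there. The problem is that the Elliott/Villadsen range constructions do not come with this level of control over the interval-to-interval partial maps; asserting that ``the maps between interval summands can be prescribed independently of the $K_1$-data'' with multiplicities growing as $(2m_0+1)k$ and of the exact point-evaluation form~(\ref{2ml-1}) is a nontrivial claim you have not established. Even granting the existence of interval summands at every stage, the image of $w$ in such a summand may pass through circle summands at intermediate stages, and you have not argued why the resulting unitary still has the shape~(\ref{2ml-5}) with the required rank inequalities. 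As written, the proof rests on an unproved structural assertion about the inductive system.

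The paper avoids this bookkeeping entirely. It takes \emph{any} unital simple AH-algebra $A$ with the prescribed invariant (from \cite{Jasp}), and separately builds an AI-algebra $B$ as in Lemma~\ref{Ex2ml} with $K_0(B)=\rho_A(K_0(A))$ and matching tracial simplex. The test unitary $u_1$ lives in $B$. Using 8.4 of \cite{Lninv} one produces $u\in CU(A)$ with the same tracial distribution as $u_1$, and one constructs a unital homomorphism $\psi:A\to B$ compatible with the tracial data. Theorem~\ref{QT} then gives unitaries $w_n\in B$ with $w_n^*\psi(u)w_n\to u_1$. If $\|u-\exp(ih)\|<\dt$ in $A$, applying $\psi$ and conjugating yields $\|u_1-\exp(iw_n^*\psi(h)w_n)\|<\dt_1$ for large $n$, whence $\|h\|\ge\|\psi(h)\|\ge 2\pi-\ep$ by Lemma~\ref{Ex2ml}. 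The point is that no control over the internal inductive structure of $A$ is needed: the problem is transferred to $B$, where Lemma~\ref{Ex2ml} already did the work.
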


\begin{proof}
By \cite{Jasp}, there exists a unital simple AH-algebra $C$
with no dimension growth such that
$$
(G_0, (G_0)_+, G_1)=(K_0(C), K_0(C)_+, K_1(C)).
$$
Let $F=\rho_C(K_0(C)).$ Let
$B$ be a unital simple \CA\, which is an inductive limit of interval algebras such that
$$
(K_0(B), K_0(B)_+)=(F, F_+)
$$
which satisfies the conclusion of \ref{Ex2ml}.
Denote by $S_{[1_B]}(K_0(B))$ the state space of $K_0(B).$  Denote by  $r: T(B)\to S_u(K_0(B))$ the map
defined by $r(\tau)([p])=\tau(p)$ for all $\tau\in T(B)$ and for all
projections in $M_{\infty}(B).$
Let
$$
(G_0, (G_0)_+, [1_B], G_1, T(B), r)
$$
be a six-tuple of the Elliott invariant.
Let $A$ be a unital simple AH-algebra with no dimension growth
such that
$$
(K_0(A), K_0(A)_+, [1_A], K_1(A), T(A), r_A)
=((G_0, (G_0)_+, [1_B], G_1, T(B), r)
$$
(given by, say (\cite{Jasp})).
Fix $\ep>0.$
Let $u_1\in CU(B)$ (in place of $u$) which has the property as stated in \ref{Ex2ml} for $\ep/2$ given above. Let $\dt_1>0$ (in place of $\dt$) be the corresponding number. Let $\dt=\dt_1/2.$
Let $\Gamma: C(\T)_{s.a.}\to \Aff(T(B))$ be the map induced
by $u_1.$
Note that $\Aff(T(B))=\Aff(T(A)).$
By 8.4 of \cite{Lninv}, there is a unitary $u\in CU(A)$ such that
$$
\tau(f(u))=\Gamma(f)(\tau)\tforal f\in C(\T)_{s.a.}.
$$
Now let
$\phi: A\to B$ be a unital \hm\,
such that $\phi_{*0}={\rm id}_{G_0}$ and
$\psi_{\sharp}: T(B)\to T(A)$ is the identity map (when we identify
$T(A)$ and $T(B)$) and
$r_A(\psi_{\sharp}(t))([p])=r(t)(\psi_{*0}([p]))$ for all
$t\in T(B)$ and all projections $p\in M_{\infty}(A).$
Note that
$$
\tau(f(\psi(u)))=\tau(f(u_1))\tforal \tau\in T(B)
$$
for all $f\in C(\T)_{s.a.}.$ Moreover
$\psi(u)\in CU(B).$ It follows from  \ref{QT} (see \cite{Lnappeqv})   that
there exists a sequence of unitaries $\{w_n\}\subset B$ such that
\beq\label{2LL-1}
\lim_{n\to\infty}w_n^*\psi(u)w_n=u_1.
\eneq
Suppose that there is a self-adjoint  element $h\in A$ such that
$$
\|u-\exp(ih)\|<\dt.
$$
Then, for some large $n,$
$$
\|u_1-\exp(i w_n^*hw_n)\|<\dt_1.
$$
It follows from that
$$
\|w_n^*hw_n\|\ge 2\pi-\ep.
$$
Thus
$$
\|h\|\ge 2\pi-\ep.
$$

\end{proof}

\begin{cor}\label{Lastcor}
Let $(G_0, (G_0)_+)$ be a countable weakly unperforated Riesz group and let $G_1$ be any countable abelian group. There exists a unital
simple AH-algebra $A$ with tracial rank one such that
\beq
(K_0(A), K_0(A)_+, K_1(A))&=&(G_0, (G_0)_+, G_1)\andeqn\\
{\rm cel}_{CU}(A)&>&\pi.
\eneq
\end{cor}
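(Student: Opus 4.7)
The plan is to argue by contradiction directly from Theorem \ref{L2pi}. I apply that theorem with $\ep=\pi/2$ to obtain the desired unital simple AH-algebra $A$ with tracial rank one and prescribed Elliott invariants $(G_0,(G_0)_+,G_1)$, together with a specific unitary $u\in CU(A)$ and some $\dt>0$ such that
$$\|u-\exp(ih)\|<\dt \implies \|h\|\ge 2\pi-\pi/2=3\pi/2.$$
It is enough to show ${\rm cel}(u)>\pi$, since this gives ${\rm cel}_{CU}(A)\ge {\rm cel}(u)>\pi$.

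Suppose for contradiction ${\rm cel}(u)\le\pi$. Then for any $\eta>0$ I get a product representation $u=\prod_{k=1}^n\exp(ih_k)$ with $\sum_{k=1}^n\|h_k\|<\pi+\eta$. The crux of the argument is to convert this into a single-exponential $\dt$-approximation $\exp(iH)$ with $\|H\|<3\pi/2$, which will contradict the bound from \ref{L2pi}. To do this, I would transport the product representation back to the model interval-algebra inductive limit $B$ of \ref{Ex2ml} through the unital *-homomorphism $\phi:A\to B$ constructed in the proof of \ref{L2pi}: this produces $\phi(u)=\prod\exp(i\phi(h_k))$ with $\sum\|\phi(h_k)\|\le\pi+\eta$, and by the approximate unitary equivalence \eqref{2LL-1} one obtains (after small perturbation) a product representation of $u_1\in CU(B)$ itself with total length $<\pi+2\eta$. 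Lifting approximately to a finite stage $A_{k_0}$ and projecting to a summand $A_{i,k_0}\cong C([0,1],M_N)$, I get a product-of-exponentials representation of length $<\pi+3\eta$ of a unitary of the $u_0$-type from \ref{Ex2}.

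At this point I would invoke a path-length variant of \ref{C01L1}: the same spectral-tracking argument used there (via \ref{specdist}, \ref{seper}, \ref{connt}) applies to $u_0$-type unitaries in $C([0,1],M_N)$ and gives a lower bound $(2-1/(n-1))\pi$ on the length of any piecewise smooth path from $1$ to $u_0$. Since $n$ was taken large in \ref{Ex2ml}, one has $(2-1/(n-1))\pi>\pi$, so for $\eta$ small enough the length bound $<\pi+3\eta$ contradicts $(2-1/(n-1))\pi$, completing the proof.

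The main obstacle is the adaptation of \ref{C01L1} from the $u$ of \ref{uniexam} to the $u_0$-type unitaries appearing in \ref{Ex2}: the latter carries an additional low-rank constant ``tail'' summand $v_0$, which perturbs the spectrum only in a controlled way and must be handled using the measure-theoretic estimates of \ref{closespec}. A secondary issue is the approximate lifting from $B$ down to a finite stage $A_{k_0}$ while keeping the total exponential length under control; this is standard for inductive limits but requires care to ensure that the perturbation errors do not swamp the margin between $\pi+3\eta$ and $(2-1/(n-1))\pi$.
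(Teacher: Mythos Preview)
Your plan has a genuine gap at the final step, and it also bypasses a much shorter argument. The gap: after transporting to a summand $C([0,1],M_N)$ you need a \emph{path-length} lower bound of $(2-1/(n-1))\pi$ for the $u_0$-type unitaries of \ref{Ex2}, but no such result is available. Theorem~\ref{C01L1} proves this only for the tail-free unitary of \ref{uniexam}, while Lemma~\ref{Ex2leng} bounds only $\|h\|$ for a \emph{single} exponential approximant of $u_0$, not the length of an arbitrary product of exponentials. Extending the spectral-tracking argument of \ref{C01L1} through the low-rank tail via \ref{closespec} is plausible, but it is new work that you have not carried out --- you yourself flag it as ``the main obstacle.'' So as written the proof does not close.

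The paper avoids the entire detour through $B$ and the finite stages by an elementary fact you overlooked: in any unital \CA, if $\mathrm{cel}(v)<\pi$ then $v=\exp(ih)$ for some $h=h^*$ with $\|h\|<\pi$. Indeed, along a path of length $L<\pi$ from $1$ to $v$, each spectral value of $v$ is joined to $1$ by a path in $\T$ of length $\le L$ (refine a partition and use $d_H(\mathrm{sp}(w(s)),\mathrm{sp}(w(t)))\le\|w(s)-w(t)\|$, cf.\ Lemma~\ref{specdist}); hence $\mathrm{sp}(v)\subset\{e^{i\theta}:|\theta|\le L\}$ and the continuous logarithm on this arc gives $h$. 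Now assume $\mathrm{cel}(u)\le\pi$: choose a path from $1$ to $u$ of length $<\pi+\dt/2$, let $v$ be the point at arc length $\pi-\dt/2$ along it, and obtain $v=\exp(ih)$ with $\|h\|<\pi$ and $\|u-\exp(ih)\|\le\|u-v\|<\dt$. With $\ep=\pi/16$ in Theorem~\ref{L2pi} this forces $\|h\|\ge 2\pi-\pi/16>\pi$, a contradiction; so $\mathrm{cel}(u)>\pi$ directly in $A$.
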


\begin{proof}
Let $A$ be in the conclusion of \ref{L2pi}.
Let $\ep=\pi/16.$
Choose a unitary $u$ in $A$ and $\dt$ satisfy the conclusion of \ref{L2pi} for this $\ep.$
We may assume that $\dt<1/64.$
We will show that ${\rm cel}(u)>\pi.$
Otherwise, one obtains a self-adjoint  element
$h\in A$ with $\|h\|\le \pi$ such that
$$
\|u-\exp(ih)\|<\dt.
$$
This is not possible.

\end{proof}

\bibliographystyle{plain}

\vspace{0.2in}

hlin@uoregon.edu

\end{document}